\documentclass[reqno,a4paper, 11pt]{amsart}

\usepackage[a4paper=true,pdfpagelabels]{hyperref}
\usepackage{color}
\usepackage{latexsym}
\usepackage{mathabx}
\usepackage{orcidlink}
\usepackage{amsmath}
\usepackage{tikz}
\usepackage{amssymb}

\newtheorem{theorem}{Theorem}[section]
\newtheorem{lemma}[theorem]{Lemma}
\newtheorem{corollary}[theorem]{Corollary}
\newtheorem{proposition}[theorem]{Proposition}
\newtheorem{lettertheorem}{Theorem}
\newtheorem{letterlemma}[lettertheorem]{Lemma}

\numberwithin{equation}{section}

\newcommand{\set}[1]{\left\{#1\right\}}
\newcommand{\abs}[1]{\left | #1\right |}
\newcommand{\nm}[1]{\left \| #1 \right \|}
\newcommand{\D}{\mathbb{D}}
\newcommand{\N}{\mathbb{N}}

\newcommand{\C}{\mathbb{C}}
\newcommand{\T}{\mathbb{T}}

\renewcommand{\H}{\mathcal{H}}
\renewcommand{\a}{\alpha}
\newcommand{\U}{\mathcal{U}}
\newcommand{\B}{\mathcal{B}}
\newcommand{\e}{\varepsilon}
\renewcommand{\phi}{\varphi}

\newenvironment{Prf}{\noindent{\emph{Proof of}}}
{\hfill$\Box$ }
\newcommand{\MaD}[3]{M_{#1}(D^{#2}_{#3})}
\newcommand{\maD}[3]{m_{#1}(D^{#2}_{#3})}
\title{Maximal subspaces of strong continuity for composition semigroups}
\begin{document}
\author[N. Chalmoukis]{Nikolaos Chalmoukis \orcidlink{ 0000-0001-5210-8206 }}
	\address{Dipartimento di Matematica e Applicazioni, Universit\'a degli studi di Milano Bicocca, via Roberto Cozzi, 55 20125, Milano, Italy}
	\email{nikolaos.chalmoukis@unimib.it}

\thanks{The first author is a member of the INdAM group GNAMPA and partially supported by the grant "INdAM-GNAMPA Project", CUP E53C25002010001, "Transferring Harmonic Analysis between Discrete Structures and Manifolds" and by the research grant "Yields of the ubiquity and the geometry of inner
functions (YoungInFun)", PID2024-160326NA-I00. The research of the second author is supported
in part by Ministerio de Ciencia e Innovación, Spain, project PID2022-136619NB-I00; La Junta de Andalucía, project FQM210.}
\subjclass{47D03, 47D06, 30H25, 30H30, 30H35, 47G10, 47B33}
\keywords{Composition Semigroup, Strong continuity, Non-separable space, $M_\a(D^p_s)$, Univalent functions, Volterra operator}
\author[A. moreno]{\'Alvaro Miguel Moreno \orcidlink{0009-0003-7220-3480}}
	\address{Departamento de Analisis Matem\'atico, Universidad de M\'alaga, Campus de Teatinos, 
		29071 Malaga, Spain}
	\email{alvarommorenolopez@uma.es}
	\maketitle
\begin{abstract}
 Let $(\varphi_t)_{t\geq 0} $ a semigroup of holomorphic self-maps of the unit disk and $C_t f = f \circ \varphi_t $ the semigroup of composition operators which corresponds to $\varphi_t. $ 
 Given a non-separable Banach space of analytic functions $X $ we study the properties of the maximal subspace of $X $ on which the semigroup $C_t $ is strongly continuous.
 In particular when $X $ contains the polynomials an interesting question is for which semigroups the maximal subspace of strong continuity coincides with the norm closure of the polynomials.

 This problem has been investigated in several function spaces including $BMOA$, $BMOA_p $, the Bloch space, $Q_s $ space and analytic Morrey spaces. However, in most cases only partial results are available. 
 
We offer a unified approach to this problem which encompasses all of the above spaces as particular examples. Moreover, we completely characterize the semigroups for which the maximal subspace of strong continuity coincides with the norm-closure of the polynomials in the space, giving therefore sharp versions of a number of results in the literature.
\end{abstract}
\section{Introduction} 

A semigroup of operators over a Fr\'echet space $X $ is a one-parameter family $(T_t)_{t\geq 0} $ of bounded operators on $X $ which satisfies the semigroup law
\[ T_{t+s}=T_t T_s, \,\, \text{for all} \,\, t,s \geq 0 \]
and $T_0 = I_X $, the identity operator on $X. $ This is merely an algebraic relation between the operators and so, usually, one has to impose an analytic condition as well. 
If $X $ is a Banach space and $ \lim_{t\to 0^+}\Vert T_t x - x \Vert = 0  $  for every $x\in X $ we say that $(T_t)_{t\geq 0} $ is a strongly continuous semigroup of operators.

Semigroups of operators are more often than not associated to flows generated by dynamical systems. In the realm of complex analysis in the unit disk $\mathbb{D} $, one such dynamical system which has been extensively studied is described by the following Cauchy problem
\begin{equation*}
	\begin{cases}
		\frac{dx(t)}{dt} = G(x(t)), \\
		x(0)=z
	\end{cases}
\end{equation*}

\noindent where $G: \mathbb{D} \to \mathbb{C} $ is a holomorphic function such that the above problem has a (unique) solution $x(t) = x^z(t) = :\varphi_t(z) $  for all $t\geq 0 $ and all initial data $z\in \mathbb{D}. $  
The one-parameter family of functions $(\varphi_t)_{t\geq 0} $ is called a semigroup of holomorphic self-maps of the unit disk and $G $ is called the infinitesimal generator of the semigroup.
It follows then that the functions  $ \varphi_t   $ are univalent self-maps of the unit disk and they satisfy the semigroup property $\varphi_{t+s} = \varphi_t \circ \varphi_s, t,s \geq0. $ 
Consequently, the composition operators $(C_t)_{t\geq 0} $, $C_tf = f\circ \varphi_t $   acting on the Fr\'echet space $\mathcal{H}(\mathbb{D}) $ of holomorphic functions in the unit disk, form a semigroup of operators. 

The question of whether such semigroups are strongly continuous when restricted on a Banach space of analytic functions $X $ is substantially more intricate
and it was first considered by Berkson and Porta in \cite{Berkson1978} for the Hardy spaces $H^p, 1 \leq p < \infty $. 
\footnote{Since we are going to deal with a variety of spaces of holomorphic functions, in order to avoid overloading the notation in the introduction we will refer to \cite{zhu} for the exact definitions. The spaces that we are going to work with are going to be defined later.}
It is a consequence of Littlewood's subordination principle that $C_{t} $ are always bounded on $H^p $ and as it was shown by Berkson and Porta $(C_{t})_{t\geq 0} $ is a strongly continuous semigroup of operators. 
The interplay between the function theory of the univalent functions $(\varphi_t)_{t\geq 0 } $ on the one side and the theory of the semigroups of operators has been so successful that soon other authors considered the composition semigroup $(C_t)_{t\geq 0 } $ in a variety of holomorphic function spaces. 

Let us briefly describe this line of research. The interested reader can find more information in the survey article \cite{Siskakis1998}. The question of strong continuity bifurcates naturally in the case that the polynomials span a dense subspace of the given space or not. When this is the case, triangle inequality shows that it is sufficient to prove that  
\begin{equation*} 
	\lim_{t\to 0^+} \Vert \varphi_t^n - e_n \Vert = 0, \,\, \text{for all} \,\, n \in \mathbb{N}, \quad \text{and} \quad \limsup_{t\to 0^+}	\Vert C_t\Vert<+\infty.
\end{equation*}
where  $e_n(z) = z^n $ and $\Vert \cdot \Vert $ is the norm of the Banach space in question.
Siskakis \cite{SiskakisDirichlet} showed that this holds for example for the Dirichlet space. Blasco et al. \cite{BlascoEtal} extended the result in the little Bloch space $\mathcal{B}_0 $  and $VMOA$.  
Even more striking is the fact that the same holds in the disk algebra $\mathcal{A}(\D) $ as shown by Gumenyuk \cite{Gumenyuk}. 
Hence, in a large class of spaces that contain the polynomials as a dense subspace, any semigroup of holomorphic functions gives rise to a strongly continuous semigroup of composition operators. 

On the other hand, when we consider the action of the composition operators $C_t $ on spaces which are non-separable, so in particular polynomials do not span a dense subspace, the situation becomes quite delicate. 
This case has been first considered implicitly by Sarason \cite{Sarason1975}, who showed that when $ \varphi_t(z)=e^{it}z $, then for $f \in BMOA $, we have that $ f\circ \varphi_t \to f $, as $t\to 0^+ $ in $BMOA $ norm if and only if $f \in VMOA $. 
This implies that the semigroup of operators $C_tf(z)=f(e^{it}z) $ is not strongly continuous on $BMOA $, rather there exists a maximal closed subspace (in this case $VMOA $) of $BMOA $ on which $C_t $ is strongly continuous.
The same result holds if in the place of $BMOA $ and $VMOA $ one considers the Bloch space $\mathcal{B} $ and the little Bloch space $\mathcal{B}$. 
In a series of two papers \cite{BlascoEtal, Blasco2008} the authors considered the problem in a systematic basis. 


Let $X $ be a Banach space of analytic functions in the unit disk, i.e. a Banach space such that $X \subseteq \mathcal{H}(\mathbb{D}) $ and the identity $I:X \to \mathcal{H}(\mathbb{D}) $ is continuous. 
From now on $(\varphi_t)$ will denote a non-trivial semigroup of holomorphic self-maps of the unit disk and $C_t $ the corresponding semigroup of composition operators on $X $. 
Then, under the hypothesis that $ \limsup_{t\to 0^+}\Vert C_t \Vert_{X} < +\infty $ it can be readily seen that the subspace
\[ [\varphi_t,X ]:= \{ f\in X : \lim_{t\to 0^+} \Vert  f\circ \varphi_t - f\Vert=0 \}, \]
is a closed subspace of $X $, which, for obvious reasons, we will call the maximal subspace of strong continuity of $(\varphi_t) $ on $X. $  
In the case that $X=\mathcal{B} $, the Bloch space as we have already mentioned $\mathcal{B}_0 \subseteq [\varphi_t, \mathcal{B}] $, where $\mathcal{B}_0 $ is the little Bloch space. Moreover, no semigroup of composition operators is strongly continuous on $\mathcal{B} $, or in our notation 
\[ [\varphi_t, \mathcal{B}] \subsetneq \mathcal{B}. \]

Since $[\varphi_t, \mathcal{B}] $ quantifies in some sense how far is $C_t $ from being strongly continuous on the Bloch space, a first natural question would be  to determine conditions on the semigroup, preferably in terms of the infinitesimal generator $G $, so that $[\varphi_t, \mathcal{B}] $ is the smallest possible, i.e.  $[\varphi_t,\mathcal{B}] = \mathcal{B}_0. $ 

We say that $G\in \mathcal{H}(\mathbb{D}) $ satisfies the vanishing logarithmic Bloch condition if 

\begin{equation}
	\lim_{|z| \to 1^{-}} \frac{1-|z|^2}{|G(z)|}\log\frac{1}{1-|z|^2} = 0.
\end{equation}

In \cite{Blasco2008} the authors proved that the vanishing logarithmic Bloch condition on the infinitesimal generator $G $ of the semigroup $(\varphi_t) $ is sufficient for $[\varphi_t, \mathcal{B}] = \mathcal{B}_0 $, but their techniques fall short of proving the necessity of the condition. 
A different sufficient condition was also obtained for the case of the spaces $BMOA $  and $VMOA $. 

The first author and Daskalogiannis recently provided the first complete result in this setting. 

\begin{lettertheorem}\cite[Theorem 1.1]{ChalDask} Let $(\varphi_t) $ and $G $ as above. The following are equivalent 
	\begin{itemize}
		\item[(i)] $G $ satisfies the logarithmic vanishing Bloch condition. 
		\item[(ii)] $[\varphi_t, \mathcal{B}]=\mathcal{B}_0 $.
		\item[(iii)] $[\varphi_t, BMOA]=VMOA $.
	\end{itemize}	
\end{lettertheorem}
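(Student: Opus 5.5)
We prove (i)$\Rightarrow$(ii), (i)$\Rightarrow$(iii), and then the two converses, working throughout through the infinitesimal generator. The basic tool is the identity
\[ f\circ\varphi_t - f = \int_0^t (f'\circ\varphi_s)\, G\circ\varphi_s \, ds , \]
or, equivalently, the fact that $f\in[\varphi_t,X]$ if and only if $f$ lies in the closure of the domain of the generator $\Gamma f = Gf'$. For $X=\mathcal{B}$, standard semigroup theory then says that $[\varphi_t,\mathcal{B}]$ is the closure in $\mathcal{B}$ of $\{f\in\mathcal{B} : Gf'\in\mathcal{B}\}$.

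\emph{Sufficiency of (i).} Suppose $G$ satisfies the vanishing logarithmic Bloch condition. Take $f\in\mathcal{B}$ with $Gf'\in\mathcal{B}$. Since every Bloch function satisfies $|g(z)|\lesssim \log\frac{e}{1-|z|^2}$, we get
\[ (1-|z|^2)|f'(z)| = \frac{(1-|z|^2)\,|G(z)f'(z)|}{|G(z)|} \lesssim \frac{(1-|z|^2)}{|G(z)|}\log\frac{e}{1-|z|^2}\;\nm{Gf'}_{\mathcal{B}} \to 0 . \]
Hence $f\in\mathcal{B}_0$. Because $\mathcal{B}_0$ is closed, $[\varphi_t,\mathcal{B}]\subseteq\mathcal{B}_0$. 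The reverse inclusion is the known strong continuity on $\mathcal{B}_0$. For $BMOA$ the argument is the same, using the logarithmic growth of $BMOA$ functions and the corresponding Carleson-measure characterisation.

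\emph{Necessity, the main step.} Suppose (i) fails. Then there are points $z_n\to\partial\D$ with
\[ (1-|z_n|^2)\log\tfrac{1}{1-|z_n|^2} \ge c\,|G(z_n)| . \]
We must build $f\in\mathcal{B}\setminus\mathcal{B}_0$ with $Gf'\in\mathcal{B}$, or more precisely with $\nm{f\circ\varphi_t-f}\to0$.

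My first attempt is a lacunary-type sum of localized logarithms,
\[ f=\sum_n \lambda_n\, \log\frac{1}{1-\overline{w_n}z}\cdot(\text{correction}), \]
or a sum of Blaschke-type kernels centred at $z_n$, passing to a sparse subsequence so the pieces are separated. The aim is that $f'$ is of size about $\frac{1}{1-|z_n|}$ near $z_n$ (so $f\notin\mathcal{B}_0$), while $f\circ\varphi_t-f$ is small uniformly. The smallness should come from the hyperbolic displacement of $\varphi_t$ near $z_n$: since $\varphi_t(z)\approx z+tG(z)$, this displacement is comparable to $t|G(z_n)|/(1-|z_n|)\lesssim t\log\frac1{1-|z_n|}$, which is what the log factor is designed to absorb.

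The delicate point, which I expect to be the main obstacle, is controlling the Bloch norm of $f\circ\varphi_t-f$ along the whole trajectory, not only near $z_n$. I would handle it by using the univalence of $\varphi_t$ and the Koenigs linearization, which straightens the flow to translations $h+t$ in a strip or half-plane model. There I would take $f$ to be a function of the Koenigs coordinate that grows like $\log$ in the transverse direction but only slowly along the flow. This construction then also has to be shown to be in $BMOA$ (via Carleson measures) to get (iii)$\Rightarrow$(i).
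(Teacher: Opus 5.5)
Your proof of (i)$\Rightarrow$(ii) is correct, and it is more direct than the paper's route through $T_\gamma$. The logarithmic growth of $Gf'\in\mathcal{B}$, combined with the vanishing logarithmic Bloch condition, puts every $f$ in the generator domain into $\mathcal{B}_0$, and \eqref{eq: maximal subs 1} does the rest. The claim that for $BMOA$ ``the argument is the same'' is false, however. The same pointwise bound only gives $|f'(z)|^2(1-|z|^2)\le \eta(z)^2/(1-|z|^2)$, with $\eta(z)\to 0$ at an uncontrolled rate. This majorant need not even be integrable over a Carleson box, so no vanishing Carleson estimate for $|f'|^2(1-|z|^2)\,dA$ follows. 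What rescues the implication is the univalence of $\gamma$ (Alexander--Noshiro--Warschawski in the elliptic case; $\gamma=h$ is the Koenigs map otherwise). Corollary~\ref{coro: Blog univ equiv Flog} then upgrades $\gamma\in\mathcal{B}_{\log,0}$ to $\gamma\in F_{\log,0}(2,0,1)$. From there $T_\gamma(BMOA)\subset VMOA$ by Theorem~\ref{thlet: tg log caract}, and $[\varphi_t,BMOA]\subset VMOA$ by \eqref{eq:Blasco}. This is how the paper argues in Theorem~\ref{thm:1.3}, of which the statement is the case $p=2$, $s\ge 1$.

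The converses are where the work lies, and your proposal contains no proof of them; the mechanism you suggest also does not work. At the points $z_n$ where (i) fails you only know that the hyperbolic speed $|G(z_n)|/(1-|z_n|^2)$ is $\lesssim \log\frac{1}{1-|z_n|}$, which is unbounded. So for fixed $t$ the displacement is not small uniformly in $n$, nothing is ``absorbed'', and bounding $f\circ\varphi_t-f$ through displacement gives nothing.

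The missing idea is to place $f$ in the generator domain through the Volterra operator. Write $X=\mathcal{B}$ or $BMOA$ and $X_0=\mathcal{B}_0$ or $VMOA$, and take $f=T_\gamma F$ with $F\in X$. Then $Gf'=(z-\tau)F$ (or $iF$) lies in $X$ automatically, so $f\in[\varphi_t,X]$ as soon as $f\in X$. If $F$ has size $\log\frac{1}{1-|z_n|}$ near $z_n$, then $(1-|z_n|^2)|f'(z_n)|\gtrsim 1$ precisely because $\gamma\notin\mathcal{B}_{\log,0}$, so $f\notin X_0$.

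The real difficulty is keeping $T_\gamma F$ inside $X$. If $\gamma\in F_{\log}$, this follows from non-compactness of $T_\gamma$ (Theorem~\ref{thlet: tg log caract}(iv)). If $T_\gamma$ is unbounded, one needs the inductive construction $F=\sum_k a_k\beta_{w_k}$ with $a_k\le 2^{-k}$, as in Proposition~\ref{lemma: Tg log}. There each new logarithmic bump is normalised by a maximal box average, so that it forces a box average $\gtrsim 1$ on a small box $S(I_n)$ while all box averages of $T_\gamma F_n$ stay uniformly bounded.

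Non-elliptic semigroups need a separate argument, since there (i) always fails and you must still exhibit an element of $[\varphi_t,X]\setminus X_0$. The paper takes $H=\log(h-w_0)$ with $w_0\notin\overline{h(\D)}$, which is univalent and Bloch. Either $H\notin F_{\log,0}$, and a witness for $T_H$ transfers through $T_HF=T_h(F/(h-w_0))$. Or $T_H$ is compact, hence quasinilpotent, so $f-T_Hf=1$ has the solution $f=h-w_0\in X$; then $Gh'=i$ gives $h\in[\varphi_t,X]$, while $h\notin X_0$ because $h(\D)$ contains a half-strip.

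Your last paragraph only gestures at the Koenigs coordinate, produces no function, and leaves the $BMOA$ case open. So (ii)$\Rightarrow$(i) and (iii)$\Rightarrow$(i) remain unproved.
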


Although this provides a complete picture in the case of the Bloch space and $BMOA $, semigroups of composition operators have been investigated in a variety of spaces of this kind.
In \cite{DaskGal} for example, the authors consider the so-called $BMOA_p, 1\leq p\leq 2 $ spaces which are defined by the seminorm 
\[ \sup_{a\in \mathbb{D}} (1-|a|^2)\int_\mathbb{D} |f'(z)|^p\frac{(1-|z|^2)^{p-1}}{|1-\overline{a}z|^2}d A(z) < +\infty. \]
The closure of the polynomials in this norm is denoted by $VMOA_p $. For any holomorphic semigroups $(\varphi_t)_{t\geq 0} $ the authors prove that the maximal subspace of strong continuity satisfies 
\[ VMOA_p \subseteq [\phi_t, BMOA_p] \subsetneq BMOA_p. \]

Furthermore, in analogy with the $BMOA $ case the they prove that for $VMOA_p = [\phi_t, BMOA_p] $ to hold the following condition is sufficient; 

\[ \lim_{|I|\to 0} \frac{\Big( \log\frac{2}{|I|} \Big)^p}{|I|} \int_{S(I)}\frac{(1-|z|^2)^{p-1}}{|G(z)|^p}d A(z) = 0, \]

where $dA $ is the Lebesgue area measure,  $I\subseteq \partial \mathbb{D} $ is an open arc and $S(I) = \{z\in \mathbb{D}\setminus \{0\} : 1-|z|<|I|, z/|z| \in I \} $. 
Similar results have been also obtained for the $Q_s $ spaces by Wu and Wulan \cite{Wu2021}, and for the analytic Morrey spaces $H^{2,\lambda} $ in \cite{GalMerchanSiskakis}. 

The main limitation of the above results is that they do not provide a complete characterization of when the maximal subspace of strong continuity coincides with the closure of polynomials in the space with the exception of the results in \cite{ChalDask, Arevalo2019}. 
Furthermore, the sufficient conditions that are obtained are ad hoc for each specific example, and the relationship between them remain unclear.
Moreover, the techniques employed apply separately to each space, and thus no unifying framework emerges. 

The aim of this work is, first to give sharp results in all the above cases therefore establishing sufficient and necessary conditions for the minimality of the maximal subspace of strong continuity in each case, therefore answering completely the questions regarding the maximal subspace of strong continuity in each particular case studied in the papers \cite{DaskGal, GalMerchanSiskakis, Wu2021}. 
Furthermore, we propose a unified approach that treats all the above cases as particular examples. 
In this way our theorems automatically extend to a larger class of spaces. 
Let us try to describe the main results in a bit more detail.

For $1<p<\infty$ and $-1<s<\infty$, the weighted Dirichlet space $D^p_s$ consists of analytic functions on $\D$ such that its derivative belongs to the standard Bergman space $A^p_s$, meaning
$$
\int_\D \abs{f'(z)}^p(1-\abs{z}^2)^s dA(z)<\infty.
$$
These are Banach spaces equipped with the norm $\nm{f}_{D^p_s}=\abs{f(0)}+\nm{f'}_{A^p_s}$. In particular, $D^2_0 = \mathcal{D}$ is the classical Dirichlet space, $D^p_{p-2}=B^p$ are the classical Besov spaces and by Littlewood-Paley estimates \cite{Garnett2006} $D^2_1=H^2$ the Hardy space, and if $s>p-1$ then $D^p_s=A^p_{s-p}$.

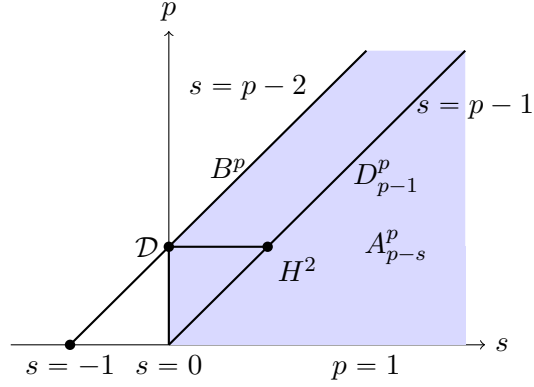
\begin{figure}[h]
\centering
\begin{tikzpicture}[scale=1.3]

\draw[->] (-1.6,0) -- (3.2,0) node[right] {$s$};
\draw[->] (0,0) -- (0,3.2) node[above] {$p$};


\fill[blue!15]
    (0,0) -- (0,1) -- (3,1) -- (3,0) -- cycle;

\fill[blue!15]
    (0,1) -- (3,1) -- (3,3) -- (2,3) -- cycle;


\node[below] at (-1,0) {$s=-1$};
\node[below] at (0,0) {$s=0$};
\node[below] at (2,0) {$p=1$};

\fill (-1,0) circle (1.5pt);
\fill (0,1) circle (1.5pt);
\fill (1,1) circle (1.5pt);

\node[left] at (0,1) {$\mathcal{D}$};
\node[below right] at (1,1) {$H^2$};

\draw[thick] (-1,0) -- (2,3);
\node[above] at (0.8,2.4) {$s=p-2$};

\draw[thick] (0,0) -- (3,3);
\node[above right] at (2.4,2.2) {$s=p-1$};

\draw[thick] (0,1) -- (1,1);

\draw[thick] (0,0) -- (0,1);

\node at (0.6,1.8) {$B^p$};
\node at (2.2,1.7) {$D^{p}_{p-1}$};
\node at (2.3,1) {$A^p_{p-s}$};

\end{tikzpicture}
\caption{The family of spaces $D^p_s $. The blue region is the admissible region of the exponents $p, s.$  }
\end{figure}

Let $1<p<\infty$, $p-2\leq s<\infty$ and $0\leq \alpha<\infty$, we define the space $\MaD{\a}{p}{s}$ as the subspace of $D^p_s$ of the functions $f$ such that
$$
\rho_{\a,p,s}(f)=\sup_{a\in \D} (1-\abs{a}^2)^{\a} \nm{f\circ\phi_a - f(a)}_{D^p_s}<\infty,
$$ 
where $\phi_a(z)=\frac{a-z}{1-\overline{a}z}$ are the automorphism of the unit disk that are also involutions. One can think of $M_\alpha(D^p_s) $ as the $\alpha $ - M\"obius invariant version of the weighted Dirichlet space $D^p_s. $ 
These spaces generalize several spaces of analytic functions such as $Q_s = \MaD{0}{2}{s}$ for $0<s<\infty$, in particular these coincide with $BMOA$ if $s=1$, and the Bloch space $\B$ if $s>1$; the BMOA-type spaces $BMOA_p=\MaD{0}{p}{p-1}$; the Morrey spaces $H^{2,\lambda}=\MaD{\frac{1-\lambda}{2}}{2}{1}$ for $0<\lambda<1$, or the growth spaces $\mathcal{A}^{-\alpha} = M_\alpha(D^p_s)  $ if $ 0<\alpha$, $1 < p < \infty$ and $s > p\a + p -1.$      
We equip these spaces with the norm
$$
\nm{f}_{\MaD{\a}{p}{s}} = \abs{f(0)}+\rho_{\a,p,s}(f).
$$ 

We also consider the ``small version'' for theses spaces
$$
\maD{\a}{p}{s} = \set{f\in\MaD{\a}{p}{s}: \lim_{\abs{a}\to 1^-}(1-\abs{a}^2)^{\a} \nm{f\circ\phi_a - f(a)}_{D^p_s}=0},
$$
for detailed info about this spaces and its basic properties see Section 2.

In general, before considering the question of strong continuity on the spaces $M_\alpha(D^p_s) $ we need to know when composition operators are well defined on $M_\alpha(D^p_s) $. Keep in mind that for $\alpha \geq \frac{s-(p-2)}{p} $ $M_\alpha(D^p_s) = D^p_s $ (see Proposition \ref{prop: classif Mad}).   

\begin{theorem}\label{thm:1.1}
 Let $1<p<\infty $, $p-2<s<\infty, 0 \leq \alpha < \frac{s-(p-2)}{p} $ and $\varphi : \mathbb{D} \to \mathbb{D} $ analytic. Let $C_\varphi $ the composition operator $C_\varphi f = f\circ \varphi. $ 
 Then
 \begin{itemize}
	 \item[(i)] If $s>p-1 $, $C_\varphi  $ is bounded on $M_\alpha(D^p_s) $;
     	\item[(ii)] If $\varphi $ is univalent and $ 2 \leq p < \infty  $ and $p-2<s \leq p-1 $, or $1<p<2 $ and $0\leq s \leq p-1 $, then $C_\varphi $ is bounded on $M_\alpha(D^p_s) $.   
 \end{itemize}
	Moreover,
$$
\nm{C_\phi}_{\MaD{\a}{p}{s}} \lesssim
\begin{cases}
	\log\!\dfrac{e}{1 - \abs{\phi(0)}}, & \text{if } \a=0, \\[1em]
	\left( \dfrac{1}{1 - \abs{\phi(0)}} \right)^\a, & \text{if } \a > 0.
\end{cases}
$$
\end{theorem}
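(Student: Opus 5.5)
The strategy is to reduce the boundedness of $C_\phi$ on $\MaD{\a}{p}{s}$ to a uniform estimate for composition with self-maps fixing the origin on $D^p_s$, using Möbius invariance.

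Fix $a\in\D$ and put $b=\phi(a)$. The map
\[
\psi_a=\phi_b\circ\phi\circ\phi_a
\]
is an analytic self-map of $\D$ with $\psi_a(0)=0$, and it is univalent whenever $\phi$ is. Since $\phi_b$ is an involution,
\[
f\circ\phi\circ\phi_a-f(\phi(a))=(f\circ\phi_b - f(b))\circ\psi_a .
\]
Suppose we know that composition with self-maps fixing $0$ is bounded on $D^p_s$ with a uniform constant $K$. In case (i) this should hold for all such maps; in case (ii), for univalent ones. Then
\[
\nm{C_\phi f\circ\phi_a - C_\phi f(a)}_{D^p_s}\le K\,\nm{f\circ\phi_b-f(b)}_{D^p_s}\le K(1-|b|^2)^{-\a}\rho_{\a,p,s}(f).
\]
Multiplying by $(1-|a|^2)^{\a}$ and using the Schwarz--Pick type inequality $\frac{1-|a|^2}{1-|\phi(a)|^2}\le \frac{4}{1-|\phi(0)|}$ (up to constants), we get
\[
\rho_{\a,p,s}(C_\phi f)\lesssim (1-|\phi(0)|)^{-\a}\rho_{\a,p,s}(f).
\]
This is exactly the claimed bound when $\a>0$.

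It remains to control $|f(\phi(0))|$ by the norm. We use the pointwise growth estimate for $\MaD{\a}{p}{s}$ functions, presumably proved in Section 2 alongside Proposition \ref{prop: classif Mad}. For $\a>0$ it gives $|f(z)-f(0)|\lesssim (1-|z|)^{-\a}\nm{f}$. For $\a=0$ we have $\MaD{0}{p}{s}\subseteq\B$, so $|f(z)|\lesssim \log\frac{e}{1-|z|}\nm{f}$. In the case $\a=0$ the seminorm part is bounded by an absolute constant times $\rho$, so the logarithm comes only from the point evaluation. Together these yield the stated operator-norm bounds.

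The uniform estimate on $D^p_s$ for maps fixing $0$ is the main obstacle. In case (i), $s>p-1$ gives $D^p_s=A^p_{s-p}$ with $s-p>-1$. Littlewood subordination for weighted Bergman spaces, combined with the growth of $\phi_b$-type factors, gives boundedness of $C_\psi$ with a constant depending only on $\psi(0)=0$. One can argue either via subharmonicity of $|f|^p$ and the norm formula $\int|f|^p(1-|z|^2)^{s-p}dA$, or directly. One point needs care: $f\circ\phi_b-f(b)$ is controlled in $D^p_s$ rather than in $A^p_{s-p}$ with its $f(0)$ term, but it vanishes at $0$, so the two norms are equivalent.

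In case (ii), $\psi$ is univalent. We change variables $w=\psi(z)$:
\[
\int_\D|f'(\psi)|^p|\psi'|^p(1-|z|^2)^s\,dA=\int_{\psi(\D)}|f'(w)|^p|\psi'(\psi^{-1}w)|^{p-2}(1-|\psi^{-1}w|^2)^s\,dA(w).
\]
For $p\ge2$ we bound $|\psi'(z)|(1-|z|^2)\le 1-|\psi(z)|^2$ (Schwarz--Pick), which leaves the factor $(1-|\psi^{-1}w|^2)^{s-p+2}\le C(1-|w|^2)^{s-p+2}$. The last inequality uses $s-p+2>0$ and the univalent distortion bound $1-|z|\lesssim 1-|\psi(z)|$, which holds for univalent $\psi$ with $\psi(0)=0$ via Koebe. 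For $1<p<2$ the exponent $p-2$ is negative, so Schwarz--Pick goes the wrong way. Here I would first use Koebe's lower bound $|\psi'(z)|(1-|z|)\gtrsim \operatorname{dist}(\psi(z),\partial\psi(\D))$, and if that fails, fall back on a subordination/Hardy-type argument using $0\le s\le p-1$ (e.g.\ writing $D^p_s$ via Littlewood--Paley as a Hardy--Bergman type space where subordination applies). I expect this subcase to need the most care.
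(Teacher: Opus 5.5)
Your reduction is the paper's: conjugate by $\psi_a=\phi_{\phi(a)}\circ\phi\circ\phi_a$, prove a uniform bound for $C_\psi$ on $D^p_s$ when $\psi(0)=0$, then finish with Schwarz--Pick and the growth estimates for $|f(\phi(0))|$. Case (i) and the subcase $p\ge 2$ of (ii) are handled as in the paper. Incidentally, $1-|z|\le 1-|\psi(z)|$ is just Schwarz's lemma, valid for every self-map fixing $0$; Koebe is not needed.

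The genuine gap is the subcase $1<p<2$, $0\le s\le p-1$ of (ii). You leave it open, and neither route you suggest closes it.
\begin{itemize}
\item The Koebe route makes no progress on its own. For $w=\psi(z)$, set $d(w):=\operatorname{dist}(w,\partial\psi(\mathbb{D}))$. Koebe gives $d(w)\asymp|\psi'(z)|(1-|z|^2)$ in both directions, so substituting $d$ for $|\psi'|$ only rewrites the integrand.
\item If you then replace $1-|\psi^{-1}(w)|$ by $1-|w|$, as in your $p\ge 2$ argument, you are left with the weight $d(w)^{p-2}(1-|w|^2)^{s-p+2}$. Because $p-2<0$, this is not dominated by $(1-|w|^2)^s$ near the parts of $\partial\psi(\mathbb{D})$ lying inside $\mathbb{D}$. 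For $\psi(\mathbb{D})$ a disk minus suitably accumulating radial slits, $\int_{\psi(\mathbb{D})}d(w)^{p-2}(1-|w|^2)^{s-p+2}\,dA(w)=\infty$, so the bound fails already for $f(z)=z$.
\item The Littlewood--Paley fallback does not apply either. $D^p_s$ is a Hardy space only for $(p,s)=(2,1)$, and it coincides with a weighted Bergman space only when $s>p-1$.
\end{itemize}

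The missing idea, which is the paper's, is H\"older's inequality with exponents $2/p$ and $2/(2-p)$, writing $|\psi'|^p=(|\psi'|^2)^{p/2}\cdot 1$:
\[
\int_{\mathbb{D}}|f'(\psi)|^p|\psi'|^p(1-|z|^2)^s\,dA\le\Bigl(\int_{\mathbb{D}}|f'(\psi)|^p|\psi'|^2(1-|z|^2)^s\,dA\Bigr)^{p/2}\Bigl(\int_{\mathbb{D}}|f'(\psi)|^p(1-|z|^2)^s\,dA\Bigr)^{(2-p)/2}.
\]
\begin{itemize}
\item First factor: by Schwarz, $(1-|z|^2)^s\le(1-|\psi(z)|^2)^s$; this is exactly where $s\ge 0$ is used. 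The univalent change of variables $w=\psi(z)$ then bounds it by $\|f'\|_{A^p_s}^p$.
\item Second factor: it equals $\|f'\circ\psi\|_{A^p_s}^p$, which is at most $\|f'\|_{A^p_s}^p$ by Littlewood subordination in $A^p_s$ applied to the subharmonic function $|f'|^p$.
\end{itemize}
Hence $\|(f\circ\psi)'\|_{A^p_s}\le\|f'\|_{A^p_s}$ for every univalent $\psi$ with $\psi(0)=0$. This is the uniform estimate your reduction needs, and the rest of your argument then goes through. For general $\phi(0)$, the paper's two factors contribute $(1-|\phi(0)|)^{-sp/2}$ and $(1-|\phi(0)|)^{-(s+2)(2-p)/2}$, which combine to $(1-|\phi(0)|)^{-(s-(p-2))}$.
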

\begin{figure}[h]
	\centering
	\begin{tikzpicture}[scale=1.3]
		
		\draw[->, dashed] (0,0) -- (3.2,0) node[right] {$s$};
		\draw[thick] (-1.6,0) -- (0,0);
		\draw[->] (0,0) -- (0,3.2) node[above] {$p$};
		
		
		\fill[blue!15]
		(0,0) -- (0,1) -- (3,1) -- (3,0) -- cycle;
		
		\fill[blue!15]
		(0,1) -- (3,1) -- (3,3) -- (2,3) -- cycle;
		
		
		\node[below] at (0,0) {$s=0, p=1$};
		
		\fill (1,1) circle (1.5pt);
		
		\node[left] at (0,1) {$\mathcal{D}$};
		\node[below right] at (0.3,0.5) {$BMOA_p$};
		
		\draw[dashed] (0,1) -- (2,3);
		\node[above] at (0.8,2.4) {$s=p-2$};
		
		\draw[thick] (0,0) -- (3,3);
		\node[above right] at (2.4,2.2) {$s=p-1$};
		
		\draw[thick] (0,1) -- (1,1);
		
		\draw[thick] (0,0) -- (0,1);
		
		\node at (0.6,1.8) {$B^p$};
		\node at (2.5,1.5) {$\mathcal{B} $ };
		\node at (0.5,0.8) {$Q_s $ };
		\node at (1.6,1) {$BMOA$};
		
	\end{tikzpicture}
	\caption{The family of spaces $M_0(D^p_s) $. The blue region is the admissible region for the parameters $p, s $. }
\end{figure}
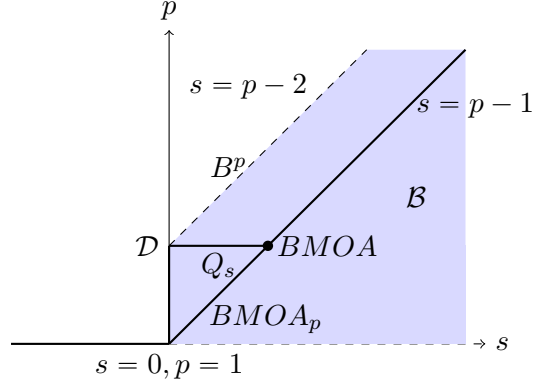

 Since all members of a semigroup are univalent functions, the corresponding composition semigroup $C_t $ is well defined in the above range of $p,s $ and $\alpha $. Moreover, the bound on the norm of the operator allows us to check that $[\phi_t,M_\a(D^p_s)]$ is a closed subspace of $M_\a(D^p_s)$. We will call this range the admissible range of parameters for brevity. Explicitly the admissible range is the set of parameters $p, s, \alpha $ such that $1 < p < \infty$, $0\leq \alpha <\frac{s-(p-2)}{p} $ and either $p\geq 2 $ and $s > p - 2 $, or $1<p<2$ and $ s \geq 0$.  
 
Next we turn to the study of the maximal subspace of strong continuity. 

\begin{theorem}\label{thm:1.2}

	Let $p,s,\alpha $ in the admissible range and $(\varphi_t) $ a semigroup of holomorphic functions. Then 
	\begin{equation*}
	m_\alpha(D^p_s)\subset	[\varphi_t, M_\alpha(D^p_s)] \subsetneq M_\alpha(D^p_s)
	\end{equation*}
	i.e. every semigroup induces a strongly continuous semigroup of composition operators on $m_{\a}(D^p_s)$, and $[\varphi_t, M_\alpha(D^p_s)]$ is a proper closed subspace of $M_\alpha(D^p_s)$ . 
\end{theorem}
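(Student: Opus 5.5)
Two things must be shown: the inclusion $m_\alpha(D^p_s)\subset[\varphi_t,M_\alpha(D^p_s)]$, and the strictness $[\varphi_t,M_\alpha(D^p_s)]\neq M_\alpha(D^p_s)$.

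\textbf{The inclusion.} I will use the standard reduction recalled in the introduction. By Theorem~\ref{thm:1.1}, $\limsup_{t\to0^+}\|C_t\|<\infty$, since $\varphi_t(0)\to0$. So $[\varphi_t,M_\alpha(D^p_s)]$ is closed. It therefore suffices to show two facts:
\begin{itemize}
\item polynomials are dense in $m_\alpha(D^p_s)$;
\item $\|\varphi_t^n-e_n\|_{M_\alpha(D^p_s)}\to0$ for each $n$.
\end{itemize}
For density, I expect to take this from Section~2. In the admissible range, $m_\alpha(D^p_s)$ is the closure of the polynomials, and this will be proved via dilations $f_r(z)=f(rz)$, using the little-oh condition to control $\sup_{|a|>R}$ uniformly in $r$. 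If it is not available, I would prove it that way.

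For the second fact, write $g_t=\varphi_t^n-e_n$. The idea is to split the supremum over $a$ into two regions.
\begin{itemize}
\item \emph{Compact region $|a|\le R$.} Here $(1-|a|^2)^\alpha\|g_t\circ\phi_a-g_t(a)\|_{D^p_s}$ is bounded by a constant (depending on $R$) times $\|g_t\|_{D^p_s}$. This tends to $0$ because the semigroup is strongly continuous on $D^p_s$; that in turn holds because polynomials are dense in $D^p_s$ and the norms are bounded, or directly by dominated convergence on $g_t'$.
\item \emph{Region $|a|>R$.} Since $\varphi_t^n\in m_\alpha(D^p_s)$, the contribution here is small uniformly in $t$.
\end{itemize}

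The uniformity in the second region is the delicate part. I would obtain it from a uniform bound on the local Carleson-type quantity of $\varphi_t$: $|(\varphi_t^n)'|\le n|\varphi_t'|$, and $\varphi_t$ is univalent and close to the identity. Concretely, I would bound $\int|\varphi_t'|^p(1-|\phi_a|^2)^s\,dA$ using the change of variables $w=\varphi_t(z)$ together with the Koebe distortion estimate $|\varphi_t'(z)|(1-|z|)\lesssim 1-|\varphi_t(z)|$. This converts the integral into one over $\varphi_t(\mathbb D)$ involving the Möbius weight and a power of $(1-|w|)$, which is exactly the kind of estimate used in proving Theorem~\ref{thm:1.1}(ii). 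I expect this uniformity step to be the main technical obstacle.

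\textbf{Strictness.} I would construct $f\in M_\alpha(D^p_s)$ with $\|f\circ\varphi_t-f\|\not\to0$. In the first step, take the Denjoy–Wolff point $b$.
\begin{itemize}
\item If $b\in\mathbb D$, conjugate to $b=0$; then $\varphi_t(z)=e^{-ct}z+\dots$ is close to a rotation/dilation near $\partial\mathbb D$.
\item In general, I use that for a nontrivial semigroup, $\varphi_t$ maps a boundary point $\zeta$ (with $G(\zeta)\ne0$ in the angular sense) along the flow, moving boundary points a definite hyperbolic distance.
\end{itemize}
In the second step, take a ``lacunary/peak'' function. For $\alpha=0$, take a function built from $\log$-type singularities placed at a sequence $a_k\to\zeta$: $f=\sum_k c_k\,F_{a_k}$, where $F_a(z)=\log\frac{1}{1-\bar az}$ (Bloch/BMOA-type extremal functions), or $(1-\bar az)^{-\alpha}$-normalized versions when $\alpha>0$. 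The test functions are chosen so that $(1-|a_k|^2)^\alpha\|f\circ\phi_{a_k}-f(a_k)\|\gtrsim1$ and the pieces are essentially disjoint, which keeps $f$ in $M_\alpha$. Then along a sequence $t_k\to0$ such that $\varphi_{t_k}(a_k)$ lies at hyperbolic distance $\approx1$ from $a_k$, the function $f\circ\varphi_{t_k}-f$, tested at $a_k$, has norm bounded below.

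The simplest route is to reduce to the known result \cite{ChalDask} via an embedding. I would check whether the paper's characterization gives a cleaner argument: if $[\varphi_t,M_\alpha]=M_\alpha$, then a uniform boundedness/Banach-space argument shows the generator is bounded (a strongly continuous semigroup on a non-separable space of this type ``Grothendieck/DP''-style must be uniformly continuous, by Lotz's theorem if $M_\alpha(D^p_s)$ is a Grothendieck space with the Dunford–Pettis property, as for $H^\infty$/Bloch). That would force $G\equiv0$. I expect proving the Grothendieck property to be harder than the direct construction, so the construction is my first choice.
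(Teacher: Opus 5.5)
There are two genuine gaps. The inclusion argument leaves its key convergence step unproved, and the strictness construction does not work as described.

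\textbf{The inclusion.} Your reduction matches the paper's: the uniform bound on $\|C_t\|$ from Theorem~\ref{thm:1.1}, $m_\alpha(D^p_s)$ being the closure of the polynomials (Corollary~\ref{coro: density polynomials}), and a $3\varepsilon$ argument. But the step that carries all the content, $\|\varphi_t^n-e_n\|_{M_\alpha(D^p_s)}\to0$, is not proved.

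For $|a|\le R$ you need $\|\varphi_t^n-e_n\|_{D^p_s}\to0$. Deriving this from ``polynomials are dense in $D^p_s$ and the norms are bounded'' is circular, because that reduction again requires convergence on monomials. Dominated convergence is also unavailable when $s\le p-1$: the natural majorant $|\varphi_t'(z)|^p(1-|z|^2)^s\le(1-|z|^2)^{s-p}$ (Schwarz--Pick) is not integrable there.

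For $|a|>R$, the fact that each $\varphi_t^n$ lies in $m_\alpha(D^p_s)$ gives smallness only for fixed $t$. The change of variables $w=\varphi_t(z)$ gives a box estimate that is bounded uniformly in $t$, not one that vanishes uniformly. What you need is that the measures $|\varphi_t'|^p(1-|z|^2)^s\,dA$ do not concentrate near $\partial\mathbb{D}$ uniformly for small $t$, and this is exactly what you leave open.

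The paper avoids the issue. Lemma~\ref{lemma: contains good space} embeds $H^\infty$ (if $s>p-1$), $B^p$ (if $p\ge2$, $s\le p-1$) or $\mathcal{D}$ (if $1<p<2$, $0\le s\le p-1$) continuously in $M_\alpha(D^p_s)$. In each of these spaces, $\|P\circ\varphi_t-P\|\to0$ for polynomials $P$ is already known \cite{Gumenyuk,AndJovSmithBp,SiskakisDirichlet}. Your splitting can also be rescued with Siskakis' theorem. The convergence $\int_{\mathbb{D}}|\varphi_t'-1|^2\,dA\to0$ makes $\int_{|z|>r}|\varphi_t'|^2\,dA$ small uniformly in small $t$. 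Schwarz--Pick (for $p\ge2$) or H\"older (for $p<2$) then transfers this to the normalized box integrals defining $M_\alpha(D^p_s)$.

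\textbf{Strictness.} This part is a sketch whose key claims are not established, and for $\alpha=0$ the construction defeats itself. Take $a_k\to\zeta$ and $c_k\ge0$. The terms $\log\frac{1}{1-\bar a_k z}$ converge locally uniformly to the nonzero function $\log\frac{1}{1-\bar\zeta z}$, so $\sum_k c_k\log\frac{1}{1-\bar a_kz}$ converges only if $\sum_k c_k<\infty$. In that case:
\begin{itemize}
\item $\sup_a\|\log\frac{e}{1-\bar az}\|_{M_0(D^p_s)}<\infty$ by Lemma~\ref{lemma: test funct a>0};
\item each term is analytic on a neighbourhood of $\overline{\mathbb D}$, hence lies in $m_0(D^p_s)$;
\item so the series converges in norm to an element of $m_0(D^p_s)$.
\end{itemize}
By the first half of the theorem, that element lies in $[\varphi_t,M_0(D^p_s)]$, so it cannot witness strictness. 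Pieces clustering at one boundary point are never ``essentially disjoint''. For $\alpha>0$ the scheme is not absurd, but the lower bound is only asserted: you claim a hyperbolic displacement of size about $1$ at $a_k$ forces $\|f\circ\varphi_{t_k}-f\|\gtrsim1$ without interference from the other pieces.

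The missing idea is that no $t$-dependent scale is needed. Fix $t>0$ and let $r\to1$ along the radius to a point $\zeta$ that is not a boundary fixed point of $\varphi_t$. Almost every $\zeta$ works for all small rational $t$, by the uniqueness theorem applied to $\varphi_t(z)-z$. Take a single function: $f=\log\frac{1}{1-\bar\zeta z}$ when $\alpha=0$ (univalent and Bloch, so in $M_0(D^p_s)$ by Lemma~\ref{lemma: univalent bloch equiv maD}), or $f=(1-\bar\zeta z)^{-\alpha}$ when $\alpha>0$. Then $(1-r^2)^{\alpha+1}|f'(r\zeta)|\asymp1$. On the other hand, $(1-r^2)^{\alpha+1}|(f\circ\varphi_t)'(r\zeta)|\to0$, because $|1-\bar\zeta\varphi_t(r\zeta)|$ stays away from $0$ and $\varphi_t\in\mathcal{B}_0$. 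Hence $\|f\circ\varphi_t-f\|_{\mathcal{B}^{\alpha+1}}\gtrsim1$, and the $\mathcal{B}^{\alpha+1}$ seminorm is dominated by the $M_\alpha(D^p_s)$ norm.

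This is what the paper imports through two sandwiches: $\bigcap_sQ_s\subset M_0(D^p_s)\subset\mathcal{B}$ (Lemma~\ref{lemmalet: general anderson}) and $M_\alpha(D^p_s)\subset\mathcal{B}^{\alpha+1}$ (Proposition~\ref{prop: proper subs 2}). Your Lotz-theorem alternative would require showing that $M_\alpha(D^p_s)$ is a Grothendieck space with the Dunford--Pettis property, which nothing in your proposal supplies.
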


Finally we come to the crux of the matter, which is to determine necessary and sufficient conditions on the infinitesimal generator in order to have that $[\varphi_t, M_\alpha(D^p_s)]=m_\alpha(D^p_s) $. 
As it turns out, the simplicity of the answer contrasts the complicated nature of the spaces $M_\alpha(D^p_s) $. We first give the answer in the case that $\alpha = 0 $, or equivalently when $M_\alpha(D^p_s) $ is the M\"obius invariant version of $D^p_s $. 

\begin{theorem}\label{thm:1.3}
	Let $p,s $ in the admissible range. The following are equivalent; 
	\begin{itemize}
		\item[(i)] $[\varphi_t, M_0(D^p_s)]=m_0(D^p_s) $,
		\item[(ii)] The infinitesimal generator $G $ satisfies the vanshing logarithmic Bloch condition. 
	\end{itemize}
\end{theorem}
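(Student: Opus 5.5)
The plan has three steps. First, characterize the maximal subspace through the generator. Second, prove (ii)$\Rightarrow$(i) by showing that functions in $[\varphi_t,M_0(D^p_s)]$ have vanishing Bloch-type growth. Third, prove (i)$\Rightarrow$(ii) by building a counterexample whenever the condition fails. Throughout, $M_0(D^p_s)$ sits between the Dirichlet-type spaces and the Bloch space: $M_0(D^p_s)\subseteq\mathcal{B}$ with $\|f\|_{\mathcal{B}}\lesssim\|f\|_{M_0(D^p_s)}$. The main work is controlling the Möbius-invariant seminorm of $Gf'$.

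\textbf{Step 1 (characterization).} By the standard semigroup argument, already used in Theorem \ref{thm:1.2} through the uniform bound $\sup_{0<t<1}\|C_t\|<\infty$ from Theorem \ref{thm:1.1},
\[
[\varphi_t,M_0(D^p_s)]=\overline{\{f\in M_0(D^p_s): Gf'\in M_0(D^p_s)\}},
\]
since the right-hand side is the closure of the domain of the generator $\Gamma f=Gf'$. Hence (i) holds if and only if every $f\in M_0(D^p_s)$ with $Gf'\in M_0(D^p_s)$ lies in $m_0(D^p_s)$, the latter being closed.

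\textbf{Step 2 ((ii)$\Rightarrow$(i)).} Let $f\in M_0(D^p_s)$ with $g:=Gf'\in M_0(D^p_s)\subseteq\mathcal{B}$. Then $|g(z)|\lesssim \log\frac{e}{1-|z|^2}$, so
\[
(1-|z|^2)|f'(z)|\lesssim \frac{1-|z|^2}{|G(z)|}\log\frac{e}{1-|z|^2}\to 0,
\]
and therefore $f\in\mathcal{B}_0$. The key extra step is to upgrade "$f\in\mathcal{B}_0\cap M_0(D^p_s)$ with $Gf'\in M_0(D^p_s)$" to $f\in m_0(D^p_s)$.

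I would try to avoid this by using the operator $V_G h(z)=\int_0^z h(\zeta)/G(\zeta)\,d\zeta$, which recovers $f=f(0)+V_G(g)$. The claim to prove is that $V_G$ maps $M_0(D^p_s)$ into $m_0(D^p_s)$ under the vanishing log-Bloch condition. To estimate $\|V_Gg\circ\phi_a-V_Gg(a)\|_{D^p_s}$, I need $\int |g|^p|G|^{-p}|\phi_a'|^{p}\ldots$ in the pulled-back form. I would split $g=(g-g(a))+g(a)$:
\begin{itemize}
\item The part $g-g(a)$ is controlled by $\sup_{\D}\frac{1-|z|^2}{|G|}$ times the Möbius-invariant seminorm. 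This uses a Carleson-type/Hardy inequality $\int|h\circ\phi_a - h(a)|^p(1-|z|^2)^{s-p}\lesssim\|h\circ\phi_a-h(a)\|_{D^p_s}^p$, valid because $s>p-2$.
\item The part $|g(a)|\lesssim\log\frac{e}{1-|a|}$ is controlled by $\log\frac{1}{1-|a|}\,\sup\frac{1-|z|^2}{|G(z)|}$ over the region where $\phi_a$ concentrates.
\end{itemize}
To make both terms tend to zero as $|a|\to1$, I would cut $\D$ into a small hyperbolic disc around $a$, where $1-|z|\asymp 1-|a|$ and the condition applies directly, and its complement, where the weight $|\phi_a'|$ decays and gives a small tail. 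The main obstacle is handling the logarithmic factor on this complement uniformly.

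\textbf{Step 3 ((i)$\Rightarrow$(ii)).} Suppose the condition fails, so there are $z_n\to\partial\D$ with
\[
\frac{1-|z_n|^2}{|G(z_n)|}\log\frac{1}{1-|z_n|}\ge\delta .
\]
The plan is to find $f\in[\varphi_t,M_0(D^p_s)]\setminus m_0(D^p_s)$. By Step 1 it suffices to take $f=V_G(g)$ with $g\in M_0(D^p_s)$ chosen so that $f\in M_0(D^p_s)\setminus m_0(D^p_s)$.

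For $g$ I would take a sum of logarithm-type functions $g=\sum_k \log\frac{1}{1-\overline{w_k}z}$ along a sparse subsequence $w_k=z_{n_k}/|z_{n_k}|$, or simply one function $\log\frac{1}{1-\bar\xi z}$ after a rotation and compactness argument. Then $|g(z_n)|\asymp\log\frac{1}{1-|z_n|}$, and $(1-|z_n|^2)|f'(z_n)|\ge c\delta$ fails to vanish. By the $\mathcal{B}_0\supseteq m_0$ relation, $f\notin m_0$.

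Showing $f\in M_0(D^p_s)$ requires $V_G$ to be bounded from $M_0(D^p_s)$ to itself. This holds because $G$ is a generator, so $\mathrm{Re}(\overline{z}G(z))\le$ the Berkson–Porta form and $\frac{1-|z|^2}{|G|}$ is controlled after the standard normalizations. This mirrors the approach of \cite{ChalDask}, which I would adapt verbatim, replacing the Bloch/BMOA estimates by the $D^p_s$ estimates from Theorem \ref{thm:1.1}.
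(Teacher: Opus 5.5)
You have a genuine gap in both directions, and the missing ingredient is the same: the univalence of $\gamma$. Step~1 is fine, and so is the first half of Step~2 (that $f\in\mathcal{B}_0$).

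\textbf{Step~2.} In the elliptic case $1/G=\gamma'/(z-\tau)$, so your $V_G$ is $T_\gamma$ composed with division by $z-\tau$. Step~2 therefore amounts to proving $T_\gamma(M_0(D^p_s))\subset m_0(D^p_s)$. By Theorem~\ref{thlet: tg log caract} this is equivalent to the Carleson-box condition $\gamma\in F_{\log,0}(p,p-2,s-(p-2))$. Your only input is the pointwise bound on $(1-|z|^2)/|G(z)|$, i.e.\ $\gamma\in\mathcal{B}_{\log,0}$. For a general symbol this is strictly weaker when $s\le p-1$; lacunary series separate the two, already for $BMOA$.

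Concretely, the Hardy inequality you invoke, $\int_\D|h|^p(1-|z|^2)^{s-p}\,dA\lesssim\|h\|_{D^p_s}^p$ for $h(0)=0$, holds only for $s>p-1$. For $p-2<s\le p-1$ (this includes $BMOA$, $Q_s$ with $s\le 1$, and $BMOA_p$) the weight is not integrable, and the left side is $+\infty$ for every $h\not\equiv0$. Inserting the pointwise bound into the $g(a)$-term gives, after the change of variables, integrals comparable to $\int_\D(1-|u|^2)^{s-p}\,dA(u)$ up to logarithmic factors. These diverge for $s<p-1$, and at $s=p-1$ they are off by a factor $\log\frac{1}{1-|a|}$.

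The paper closes exactly this gap. It notes that $\gamma'=-1/p$ with $\Re p\ge 0$, so $\gamma$ is univalent by Noshiro--Warschawski. It then uses the area argument of Corollary~\ref{coro: Blog univ equiv Flog} to upgrade $\gamma\in\mathcal{B}_{\log,0}$ to $\gamma\in F_{\log,0}(p,p-2,s-(p-2))$.

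\textbf{Step~3.} This step rests on a false claim: $V_G$ is not bounded on $M_0(D^p_s)$ for a general generator.
In the elliptic case Harnack only gives $\gamma\in\mathcal{B}$, whereas boundedness of $T_\gamma$ needs the logarithmic condition $\gamma\in F_{\log}(p,p-2,s-(p-2))$. For example, $G(z)=-z(1-z)$ gives $\gamma=\log(1-z)$. Your witness $f=V_G(\log\frac{1}{1-z})$ then grows like $\frac12\log^2\frac{1}{1-z}$ and is not even Bloch.
In the non-elliptic case $(1-|z|^2)/|G(z)|=(1-|z|^2)|h'(z)|$ need not be bounded, and you do not treat that case at all.

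Even when the condition fails only mildly, your witness does not give the lower bound. A single $\log\frac{1}{1-\bar\xi z}$ gives $|g(z_n)|\asymp\log\frac{1}{|1-\bar\xi z_n|}$, which can be far smaller than $\log\frac{1}{1-|z_n|}$ under tangential approach. A sum $\sum a_k\log\frac{1}{1-\bar w_k z}$ with $\sum a_k<\infty$ only yields the lower bound $a_k\delta\to0$.

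This is why the paper splits into cases in Proposition~\ref{lemma: Tg log}:
\begin{itemize}
\item If $\gamma\notin m_0(D^p_s)$, it takes $F\equiv1$.
\item If $\gamma\in F_{\log}\setminus F_{\log,0}$ (parameters $(p,p-2,s-(p-2))$), the witness comes non-constructively from Theorem~\ref{thlet: tg log caract}: compactness $\Leftrightarrow$ $T_\gamma(M_0(D^p_s))\subset m_0(D^p_s)$, via Gantmacher.
\item If $\gamma\in m_0(D^p_s)\setminus F_{\log}$, the witness comes from a gliding-hump construction. There the coefficients $a_n=M_n^{-1}\le 2^{-n}$ are normalized by the maximal box integral, so they are summable while the box integrals over $I_n$ stay bounded below.
\end{itemize}
The resulting $\gamma\in F_{\log,0}\subset\mathcal{B}_{\log,0}$ then gives (ii).

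Non-elliptic semigroups need a separate argument. The paper uses the univalent Bloch function $H=\log(h-w_0)$ and the quasinilpotence of the compact operator $T_H$. This forces $h\in m_0(D^p_s)\subset\mathcal{B}_0$, which is impossible since $h(\D)$ contains a half-strip.
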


In light of the above theorem we therefore see that the class of semigroups for which the maximal subspace of strong continuity is the smallest possible one does not depend on the parameters $p,s $. Therefore, such semigroups are the same for  the spaces $BMOA$, $\mathcal{B}$, $Q_s$, $BMOA_p$ which correspond to particular choices of the parameters. 
Hence the problems studied in \cite{Wu2021,ChalDask,DaskGal} have one and the same answer. 

The situation is different when $\alpha>0 $. 

\begin{theorem} \label{thm:1.4}
	Let $p,s,\alpha>0$ in the admissible range. The following are equivalent  	
	\begin{itemize}
		\item[(i)] $[\varphi_t, M_\alpha(D^p_s)]=m_\alpha(D^p_s) $,
		\item[(ii)] The infinitesimal generator $G $ satisfies the vanshing  Bloch condition; 
			\begin{equation*}
				\lim_{|z|\to 1^{-}} \frac{1-|z|^2}{|G(z)|} = 0.
			\end{equation*}
	\end{itemize}

\end{theorem}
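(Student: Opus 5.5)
The plan is to follow the scheme of \cite{ChalDask}. It rests on the standard characterization of the maximal subspace through the generator: $f\in[\varphi_t,X]$ if and only if $f\in\overline{\mathcal{D}(\Gamma)}$, where $\Gamma f = Gf'$. Equivalently, using the resolvent at large $\lambda$, $[\varphi_t,X]=\overline{\{f\in X: Gf'\in X\}}$. Since $m_\alpha(D^p_s)\subset[\varphi_t,M_\alpha(D^p_s)]$ by Theorem~\ref{thm:1.2}, the statement reduces to deciding when every $f\in M_\alpha(D^p_s)$ with $Gf'\in M_\alpha(D^p_s)$ already lies in $m_\alpha(D^p_s)$. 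For $\alpha>0$, the first step is a concrete description of $M_\alpha(D^p_s)$ in the admissible range. I expect (via Proposition~\ref{prop: classif Mad} and a Möbius-invariant Carleson-measure reformulation) that $f\in M_\alpha(D^p_s)$ forces the growth $|f'(z)|\lesssim (1-|z|)^{-1-\alpha}$, while $f\in m_\alpha$ forces the corresponding little-oh. The key point is that for $\alpha>0$ the growth space $\mathcal{A}^{-\alpha}$ has no logarithmic loss, which explains why the logarithm disappears from the condition.

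\emph{Sufficiency.} Assume $(1-|z|)/|G(z)|\to0$, and let $f\in M_\alpha(D^p_s)$ with $g=Gf'\in M_\alpha(D^p_s)$. Then $g$ satisfies the pointwise bound $|g(z)|\lesssim (1-|z|)^{-\alpha}$, obtained by integrating the derivative estimate; this is where $\alpha>0$ is used, since for $\alpha=0$ we would only get $\log$. Hence $|f'(z)|=|g(z)|/|G(z)| = o\bigl((1-|z|)^{-1-\alpha}\bigr)$. It then remains to upgrade this pointwise little-oh for $f'$ to $f\in m_\alpha(D^p_s)$. I would do this by splitting the integral defining $\|f\circ\phi_a-f(a)\|^p_{D^p_s}$ into two regions. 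On a pseudo-hyperbolic disc $\Delta(a,r)$ the little-oh bound applies directly. The remainder is controlled by the $M_\alpha$-norm of $g$ together with the smallness of $1/|G|$ near the boundary, using that $(1-|a|^2)^{\alpha p}|\phi_a'|^{\dots}$ concentrates near $a$ once $\alpha<\frac{s-(p-2)}{p}$. This requires some care with the weighted integrals, but it is essentially a localization argument.

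\emph{Necessity, the main obstacle.} Assume the vanishing Bloch condition fails. Then there are $z_n\to\partial\D$ with $|G(z_n)|\le C(1-|z_n|)$. I need to construct $f\in M_\alpha(D^p_s)\setminus m_\alpha(D^p_s)$ with $Gf'\in M_\alpha(D^p_s)$, or at least with $f\in[\varphi_t,M_\alpha]$. The first attempt is a lacunary-type sum of normalized kernels
\[ f(z)=\sum_n \frac{(1-|z_n|^2)^{\beta}}{(1-\overline{z_n}z)^{\alpha+\beta}}, \]
over a sparse subsequence, with $\beta$ large. Each term has $M_\alpha$-norm $\asymp1$ at the point $z_n$, which gives $f\notin m_\alpha$. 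I would then show that $Gf'$ is bounded in $M_\alpha$ using the Schwarz-type estimate for generators: $|G(z)|\lesssim |G(z_n)|\cdot$(hyperbolic distortion), valid near $z_n$ by the Berkson--Porta representation $G(z)=(\overline{b}z-1)(z-b)P(z)$ with $\operatorname{Re}P\ge0$ and Harnack's inequality. This gives $|G|\lesssim 1-|z_n|$ on hyperbolic discs around $z_n$, so $G f_n'$ behaves like the kernel itself there, while away from $z_n$ the large $\beta$ ensures decay. The difficulty lies in controlling $G$ globally rather than only near $z_n$, since the Harnack bound only gives growth $\lesssim |1-\overline{z_n}z|^2/(1-|z_n|)$. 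I would absorb this by increasing $\beta$ and by exploiting sparseness, so that the cross terms are summable in the $M_\alpha$-norm.
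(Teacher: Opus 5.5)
Your starting point, $[\varphi_t,X]=\overline{\{f\in X: Gf'\in X\}}$, is the right one, and so is the bound $(1-|z|)^{1+\alpha}|f'(z)|\to 0$. However, both directions break down at steps you treat as routine, and the idea the paper depends on is missing in both.

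\emph{Sufficiency.} The ``localization'' step fails. By Proposition~\ref{prop: classif Mad}, $f\in m_\alpha(D^p_s)$ means $(1-|a|^2)^{-(s-(p(\alpha+1)-2))}\int_{S(a)}|f'|^p(1-|z|^2)^s\,dA\to 0$. This is an integral over the whole box $S(a)$, i.e.\ over all scales below $a$. The weight does not reduce it to a fixed disc $\Delta(a,r)$, and the part of $S(a)$ near $\partial\mathbb{D}$ is exactly where the difficulty lies. Suppose you bound that part using $|f'|\le \varepsilon |g|(1-|z|)^{-1}$ and $|g|\lesssim (1-|z|)^{-\alpha}$. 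You get $\int_{S(a)}(1-|z|)^{s-p-p\alpha}\,dA$, which is finite only when $s>p-1+p\alpha$. That is exactly the range where $M_\alpha(D^p_s)=B^{\alpha+1}$ and $m_\alpha(D^p_s)=B^{\alpha+1}_0$, so there is nothing to prove. Invoking the $M_\alpha$-norm of $g$ does not help, because it controls $g'$, not $g$. Take an elliptic semigroup with $\tau=0$ and $f=\gamma$: then $g=G\gamma'=z$, and your remainder estimate becomes $\varepsilon^p\int_{S(a)}(1-|z|^2)^{s-p}\,dA=\infty$ whenever $s\le p-1$ (e.g.\ the Morrey spaces, $p=2$, $s=1$). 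In that range, pointwise $o$-bounds on $f'$ give no vanishing-Carleson information. The missing idea is univalence. Since $\gamma'=-1/P$ with $\Re P\ge 0$, $\gamma$ is univalent, and for univalent functions $\gamma\in\mathcal{B}_0$ forces $\gamma\in m_0(D^p_s)$ (Lemma~\ref{lemma: univalent bloch equiv maD}). That lemma rests on the Aulaskari--Lappan--Xiao--Zhao theorem, a Pommerenke-type fact with no analogue for general analytic functions. The paper turns this into compactness of $T_\gamma$ on $M_\alpha(D^p_s)$ (Proposition~\ref{propo: Tg compact mad}, using $\gamma\in m_0(D^p_{s-\beta})$ and approximation by polynomials). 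Then \eqref{eq:Blasco} gives $[\varphi_t,M_\alpha(D^p_s)]=\overline{T_\gamma(M_\alpha(D^p_s))\oplus\mathbb{C}}\subset m_\alpha(D^p_s)$.

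\emph{Necessity.} The kernel sum cannot give $Gf'\in M_\alpha(D^p_s)$, because $G$ may be large near boundary points far from the $z_n$, where $f_n'$ does not vanish. Your Harnack bound is also misstated. With $\tau=0$, Schwarz--Pick applied to $P$ only gives $|G(z)|\lesssim|1-\overline{z_n}z|^2/(1-|z|)$, and the factor $(1-|z|)^{-1}$ cannot be absorbed by a larger $\beta$ or by sparseness. Concretely, $G(z)=-z\frac{1+z}{1-z}$ is a generator with $\tau=0$ that violates the vanishing Bloch condition along $z_n=-r_n\to -1$. For $f=\sum c_nf_n$ with $c_n>0$ and your kernels, $f'$ is holomorphic near $1$ with $f'(1)<0$. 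So near $1$ one has $Gf'=A(z)/(1-z)$ with $A(1)\neq 0$, and hence $(1-r)^{1+\alpha}|(Gf')'(r)|\asymp (1-r)^{\alpha-1}\to\infty$ as $r\to1^-$ whenever $\alpha<1$ (in particular for every Morrey space). Since $M_\alpha(D^p_s)\subset B^{\alpha+1}$, this gives $Gf'\notin M_\alpha(D^p_s)$.

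The cure, which is the paper's route, is to put $G$ in the denominator. Take $f=T_\gamma F$ with $F\in M_\alpha(D^p_s)$; then $Gf'=zF$ lies in $M_\alpha(D^p_s)$ automatically, and the question becomes whether $T_\gamma(M_\alpha(D^p_s))\subset m_\alpha(D^p_s)$. In the elliptic case $T_\gamma$ is bounded (Proposition~\ref{propo: Tg bounded Mad univalent}). Since $\gamma$ is univalent, this inclusion fails exactly when $\gamma\notin\mathcal{B}_0$ (Proposition~\ref{propo: Tg compact mad}, via Perfekt's o--O duality and Gantmacher's theorem). In the non-elliptic case $T_h$ need not be bounded, and the paper does three further things. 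It passes to the univalent function $H=\log(h-w_0)$. It uses the witness construction of Corollary~\ref{coro: construction mad} (Proposition~\ref{propo: construction function general}). And it uses a spectral argument showing $h\in\bigcap_\beta M_\beta(D^p_s)\setminus m_0(D^p_s)$. Nothing in your sketch replaces these steps.
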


Again we note the independence of the vanishing Bloch condition from the parameters. 
One should compare this result with Theorem 16 of \cite{Arevalo2019}, where the authors prove that the same vanishing Bloch condition is necessary and sufficient for the analogous problem on the maximal subspace of strong continuity in mean growth spaces. 
It is interesting that the techniques employed in \cite{Arevalo2019} seem to be quite distant from the ones used in this work.

The proof of Theorems \ref{thm:1.1} and $\ref{thm:1.2} $ is pretty standard and it follows after some initial considerations on the spaces $M_\alpha(D^p_s) $ in Section \ref{sec:CompositionOperators}.  
In fact, it turns out that the spaces $M_\alpha(D^p_s) $ coincide with a class of spaces introduced by Zhao \cite{zhaosurvey}. 
This identification will expedite the development of the basic theory of the spaces $M_\alpha(D^p_s). $ 

On the other hand Theorems \ref{thm:1.3} and \ref{thm:1.4} require more work. 
The basic philosophy stems from the work of Blasco et al. \cite{BlascoEtal}. 
That is, given a semigroup $(\phi_t)_{t\geq 0} $ there exists a holomorphic function $\gamma $ defined in terms of the infinitesimal operator $G $ such that (see Section \ref{sec:Notation})

\begin{equation}\label{eq:Blasco} [\phi_t, M_\alpha(D^p_s)] = \overline{M_\alpha(D^p_s) \cap (T_\gamma(M_\alpha(D^p_s)) \oplus \mathbb{C}) },\end{equation}
where $T_\gamma $ is the generalized Volterra operator  
\[ T_\gamma f(z) = \int_0^z f(\zeta)\gamma'(\zeta) d\zeta. \]

This equality, transforms the problem of determining the maximal subspace of strong continuity in a problem of determining certain properties of the image of the operator $T_\gamma. $ 
The main difficulties that one faces then is that the boundedness and compactness of $T_\gamma $ on the spaces $M_\alpha(D^p_s) $  is not fully understood, in fact it is an open problem \cite{PauZhao} to determine the class of symbols which define bounded or, respectively, compact generalized Volterra operators on the spaces $M_\alpha(D^p_s) $.
It should be mentioned that this was not the case in \cite{ChalDask}, where we have complete knowledge of the mapping properties of $T_\gamma $ on $BMOA $ and $\mathcal{B}. $ 
For this reason it is important to make non-trivial use of the extra properties of the symbol $\gamma $ in this case. 
Essentially $\gamma $ behaves like a univalent function which is then crucially exploited in Theorem~\ref{thm:1.3}
 and Theorem\ref{thm:1.4}~.

The second difficulty comes from the fact that in general $T_\gamma $ could be unbounded, and therefore part of the information contained in the image of $T_\gamma $ is lost in equation \eqref{eq:Blasco}. 
This is the most technical part of the proof and it entails a careful construction of a function in the image of $T_\gamma $ which 
acts as a ``witness'' for the non-compactness of $T_\gamma $. 
The general scheme for $\alpha = 0 $  is similar to the one in \cite[Proposition 3.6]{ChalDask}, while for $\alpha>0 $ one has modify the technique. 
This is carried out in Proposition~\ref{propo: construction function general}. 

The rest of the paper is organized as follows. Section 2 introduces all the necessary notation and definitions on analytic semigroups. Section 3 contains basic properties of $M_\a(D^p_s)$ and the study of composition operator on these spaces. Section 4 presents several useful lemmas about univalent functions in the spirit of the classical result by Pommerenke \cite{Pommerenke1978} about $BMOA$ and $\B$. Section 5 is devoted to the study of the generalized Volterra operator acting on $M_\a(D^p_s)$. Finally, in section 6 we present the main results about the maximal subspace of continuity and its proofs. 

Finally, we introduce the following notation, for two non-negative functions $a, b $ we will write $a\lesssim b$ if there exists a constant
$C>0$ such that $a\leq Cb$, and $a\gtrsim b$ is understood in an analogous manner. In
particular, if $a \lesssim b$ and $a \gtrsim b$, then we write $a\asymp b$ and say that $a$ and $b$ are comparable.
\section{Notation and definitions}\label{sec:Notation}
Let $\H(\D)$ denote the space of analytic functions in the unit disk $\D=\set{z\in\C : \abs{z}<1}$ of
the complex plane, and $\U$ be the space of all univalent functions on $\D$. 

We will first give an equivalent definition of a semigroup of holomorphic functions.
A semigroup on $\D$, $(\phi_t)_{t\geq 0}$ is defined as a family $\set{\phi_t: t\geq 0}$ of analytic self-maps on the unit disk $\phi_t:\D\to\D$ such that
\begin{enumerate}
	\item $\phi_0(z)\equiv z$,
	\item $\phi_t \circ \phi_s = \phi_{s+t},$ $t,s\geq0$,
	\item $\phi_t(z)\to z$ uniformly on compact subsets of $\D$, as $t\to0^+$.
\end{enumerate}
It turns out that if $(\phi_t)$ is a semigroup then each $\phi_t\in\U$, see \cite[Theorem 8.1.17]{BracContDiazBook}. In addition, for all members of a semigroup $(\phi_t)$ (not hyperbolic rotations), there exists a common "fixed point" $\tau\in\overline{\D}$ for which
$$
\lim_{t\to\infty} \phi_t(z)=\tau,\quad z\in\D,
$$
this point is called the Denjoy-Wolff's point of $(\phi_t)$. The concept of Denjoy–Wolff point of a
semigroup plays a key role in the semigroup theory, and we can classify semigroups with
respect to their Denjoy–Wolff point, according to whether $\tau \in \mathbb{D} $ or $\tau \in \partial \mathbb{D} $. 
In the first case we say that the semigroup is elliptic, while in the latter we simply say that the semigroup is  non-elliptic. 

If $(\phi_t)$ is a semigroup, then the limit
$$
G(z)=\lim_{t\to0} \frac{\phi_t(z)-z}{t},
$$
exists uniformly on compact subsets of $\D$. The holomorphic function $G $  is called the infinitesimal generator of the semigroup. This function characterizes the semigroup in the sense that
$$
G(\phi_t(z))= \frac{\partial\phi_t(z)}{\partial t} = G(z)\frac{\partial \phi_t(z)}{\partial z},\quad z\in\D, \, t\geq 0.
$$
By Berkson-Porta formula \cite[Theorem 10.1.10]{BracContDiazBook} we can represent $G$ in terms of the Denjoy-Wolff's point $\tau\in\overline{\mathbb{D}}$ of the semigroup
$$
G(z)=(\overline{\tau}z-1)(z-\tau)p(z),\quad z\in\D,
$$
where $p\in\H(\D)$ with $\Re{p(z)}\geq 0$, $z\in\D$. Conversely, every function
of this form is the infinitesimal generator of a unique holomorphic semigroup. 

A geometric description of a semigroup $(\phi_t)$ is provided by the so-called Koening's map, a conformal map which conjugate a given $(\phi_t)$ semigroup to a model semigroup. 

When $(\phi_t)$ is an elliptic semigroup with Denjoy-Wolff point $\tau\in\D$, $h$ is the unique conformal map such that $h(\tau)=0$, $h'(\tau)=1$ and 
$$
h(\phi_t(z))= e^{-\lambda t}h(z),\quad z\in\D, \,t\geq 0,
$$
where $\lambda\in \C\setminus \{0\}$ with $\Re(\lambda)\geq 0$ such that
$$
\phi_t'(\tau)=e^{-\lambda t},\quad t\geq 0.
$$

In addition, $\frac{h'(z)}{h(z)}=\frac{-\lambda}{G(z)}$.
If $(\phi_t)$ is non-elliptic, $h$ is the unique conformal map such that $h(0)=0$ and
$$
h(\phi_t(z))=h(z)+it,\quad z\in\D,\, t\geq 0.
$$
In this case, we have $h'(z)=\frac{i}{G(z)}$.
For a semigroup $(\phi_t)$ with infinitesimal generator $G$ and Denjoy-Wolff's point $\tau\in\overline{\D}$, we consider the function $\gamma:\D\to \C$, which we will call the $\gamma-$symbol of $(\phi_t)$ (see \cite[Definition 4]{BlascoEtal}). This function is defined as follows: if $\tau\in\D$, then 
$$
\gamma(z)= \int_{\tau}^{z}\frac{\zeta-\tau}{G(\zeta)}d\zeta,
$$
while if $\tau\in\T$, then 
$$
\gamma(z)=\int_{0}^{z}\frac{i}{G(\zeta)} d\zeta.
$$
In the case, where $\tau\in\T$ then $\gamma=h$, the Koening's map. 

Let $(\phi_t)$ be a semigroup on $\D$, we consider the associated semigroup of composition operators
$$
C_tf=f\circ\phi_t,\quad f\in\H(\D),
$$ 
The maximal subspace of strong continuity, for a semigroup can also be described in terms of the infinitesimal generator $G$. If $X$ contains the constants functions and $\limsup_{t \to 0^+}\nm{C_t}_{X}<\infty$, then by \cite[Theorem 1]{BlascoEtal}
\begin{equation}\label{eq: maximal subs 1}
	[\phi_t,X] = \overline{\set{f\in X : Gf'\in X}}
\end{equation}
This definition shows a relation between the maximal subspace of continuity and the generalized Volterra operator. For a symbol $g\in\H(\D)$ the associated Volterra operator is
$$
T_gf(z)=\int_0^z f(\zeta)g'(\zeta)d\zeta,\quad f\in\H(\D),\,z\in\D.
$$

It happens that, if $X$ is a Banach space of analytic functions on $\D$, under mild additional assumptions on $X$, by \cite[Proposition 2]{BlascoEtal}
\begin{equation*}
	[\phi_t,X] = \overline{X\cap(T_{\gamma}(X)\oplus\C)},
\end{equation*}
here we refer by $\C$ to the set of all constant functions.
\section{Basics on $\MaD{\a}{p}{s}$ and Composition operators}\label{sec:CompositionOperators}

In this section we will study some basic properties of $M_\alpha(D^p_s) $ and also obtain a description of $\MaD{\a}{p}{s}$ as the $F(p,q,s)$ spaces introduced by Zhao \cite{zhao96}. For $0<p<\infty$, $-2<q<\infty$ and $0<s<\infty$, the spaces $F(p,q,s)$ consists in $f\in\H(\D)$ such that
$$
\sup_{a\in\D} \int_\D \abs{f'(z)}^p(1-\abs{z}^2)^q(1-\abs{\phi_a(z)}^2)^s dA(z)<\infty.
$$
These spaces has been deeply studied in the last 25 years, contain only constants if $q+s\leq -1$, and if $q+s>-1$ they are non-separable Banach spaces equipped with the expected norm and the closure of the polynomials is 
$$
F_0(p,q,s) = \set{f\in F(p,q,s): \lim_{\abs{a}\to 1^-}\int_\D \abs{f'(z)}^p(1-\abs{z}^2)^q(1-\abs{\phi_a(z)}^2)^s dA(z)=0}.
$$ 
They are also always contained in the weighted Bloch space
$$
B^\a = \set{f\in\H(\D): \sup_{z\in\D} (1-\abs{z}^2)^\a\abs{f'(z)}<\infty},
$$
with $\a=\frac{q+2}{p}$, in fact this embedding turns into an equality if and only if $s> 1$. These and other key properties of $F(p,q,s)$ will be helpful for our purpose, see \cite{zhaosurvey}. 

The following well known lemma will play a key role throughout our work.
\begin{letterlemma}\label{lemmalet: equiv norm Disk-Box}
	Let $\mu$ be a positive Borel measure on $\D$. Then
	\begin{enumerate}
		\item[(i)] For each $\lambda,s>0$, there exists positive constants $C_1=C_1(\lambda,s)>0$ and $C_2=C_2(\lambda,s)>0$ such that
		$$
		C_1 \sup_{a\in\D}\frac{\mu(S(a))}{(1-\abs{a}^2)^s} \leq \sup_{a\in\D}\int_{\D}\frac{(1-\abs{a}^2)^\lambda}{\abs{1-\overline{a}z}^{\lambda+s}}d\mu(z) \leq C_2\sup_{a\in\D}\frac{\mu(S(a))}{(1-\abs{a}^2)^s};
		$$
		\item[(ii)] For each $\lambda,s>0$, $\lim_{\abs{a}\to 1^-}\int_{\D}\frac{(1-\abs{a}^2)^\lambda}{\abs{1-\overline{a}z}^{\lambda+s}}d\mu(z)=0$ if and only if 
		$$\lim_{\abs{a}\to 1^-} \frac{\mu(S(a))}{(1-\abs{a}^2)^s}=0.
		$$
	\end{enumerate}
\end{letterlemma}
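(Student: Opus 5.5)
We compare the kernel with indicator functions of Carleson boxes, and split the disk into dyadic regions adapted to the point $a$. Throughout, $S(a)$ denotes the Carleson box $S(I_a)$, where $I_a$ is the arc centred at $a/\abs{a}$ of length $1-\abs{a}$; for $\abs{a}$ small, $S(a)$ is essentially the whole disk. We use the elementary estimate $\abs{1-\overline{a}z}\asymp 1-\abs{a}^2$ for $z\in S(a)$.

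For the left inequality in (i), restrict the integral to $S(a)$. There $\frac{(1-\abs{a}^2)^\lambda}{\abs{1-\overline{a}z}^{\lambda+s}}\gtrsim (1-\abs{a}^2)^{-s}$, so the integral is at least a constant multiple of $\mu(S(a))/(1-\abs{a}^2)^s$. Taking the supremum over $a$ gives the bound. For $\abs{a}\le 1/2$ the same reasoning applies with all quantities comparable to constants.

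For the right inequality, fix $a$ with $\abs{a}>1/2$ and set $h=1-\abs{a}$. Let $I_k$ be the arc centred at $a/\abs{a}$ of length $2^k h$, for $k=0,1,\dots,N$, where $N\approx\log_2(1/h)$ so that $I_N$ is the whole circle. Put $E_0=S(I_0)$ and $E_k=S(I_k)\setminus S(I_{k-1})$. For $z\in E_k$ with $k\ge1$ we have $\abs{1-\overline{a}z}\gtrsim 2^k h$. Each $S(I_k)$ is a box $S(a_k)$ with $1-\abs{a_k}\asymp 2^k h$; if necessary we cover it by boundedly many such boxes. This gives
\[
\int_{E_k}\frac{(1-\abs{a}^2)^\lambda}{\abs{1-\overline{a}z}^{\lambda+s}}\,d\mu
\lesssim \frac{h^\lambda}{(2^kh)^{\lambda+s}}\,\mu(S(a_k))
\lesssim 2^{-k\lambda}\,\frac{\mu(S(a_k))}{(1-\abs{a_k}^2)^s}.
\]
Summing over $k$, the geometric series converges because $\lambda>0$, and we obtain the bound $C_2\sup_b \mu(S(b))/(1-\abs{b}^2)^s$. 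When $\abs{a}\le1/2$ the kernel is bounded and $\mu(\D)\lesssim\mu(S(0))$, which gives the same bound. The main bookkeeping issue is this annular decomposition together with the comparability of box sizes; the convergent sum in $2^{-k\lambda}$ is the key point.

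For (ii), the "only if" direction follows from the pointwise lower bound used in (i): the integral dominates $\mu(S(a))/(1-\abs{a}^2)^s$ for each $a$. For the "if" direction, assume $\mu(S(b))/(1-\abs{b}^2)^s\to0$ as $\abs{b}\to1$, and let $\e>0$. Choose $r<1$ so that this ratio is less than $\e$ whenever $\abs{b}>r$.

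In the dyadic sum for a given $a$, split the indices $k$ into two groups:
\begin{itemize}
\item those with $1-\abs{a_k}\le 1-r$. Their total contribution is at most $\e\sum_k 2^{-k\lambda}\lesssim\e$.
\item the remaining $k$, for which $2^kh>1-r$. The ratios $\mu(S(a_k))/(1-\abs{a_k}^2)^s$ are bounded by the finite supremum $M$, and $2^{-k\lambda}\le (h/(1-r))^\lambda$. Since there are boundedly many such $k$ (about $\log_2(1/(1-r))$ of them), their total contribution is $\lesssim M\,(h/(1-r))^\lambda$ times that count, which tends to $0$ as $h\to0$.
\end{itemize}
The finiteness of the supremum follows from the vanishing hypothesis together with $\mu(\D)<\infty$. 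Combining the two groups gives $\limsup_{\abs{a}\to1}$ of the integral at most a constant multiple of $\e$, and since $\e$ is arbitrary the limit is $0$.
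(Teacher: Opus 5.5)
Your proof is correct. The paper does not prove this lemma; it quotes it as well known. Your decomposition of $\D$ into the layers $S(I_k)\setminus S(I_{k-1})$, with the geometric factor $2^{-k\lambda}$, is the standard argument, and it is the same construction ($N(w)$, $S_k(w)$) that the paper itself runs in the proofs of Proposition~\ref{propo: Tg bounded Mad suff} and Lemma~\ref{lemma: test function log}.

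One small point concerns the ``if'' direction of (ii): $\mu(\D)<\infty$ is not a hypothesis of the lemma. It does follow from the vanishing condition, because finitely many boxes of a fixed size cover an outer annulus, but only once $\mu$ is known to be finite on compact subsets of $\D$. You should state that as the implicit assumption. Without it the implication genuinely fails: for $\mu=\infty\cdot\delta_{1/2}$ one has $\mu(S(b))=0$ for all $\abs{b}\geq 1/2$, yet the integral is infinite for every $a$.
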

\begin{proposition}\label{prop: classif Mad}
	Let $1<p<\infty$, $p-2\leq s<\infty$ and $0\leq \a <\infty$, the following holds:
	\begin{enumerate}
		\item[(i)] If $\a \geq \frac{s-(p-2)}{p}$, then $$\MaD{\a}{p}{s}=D^p_s,
		$$
		with norm equivalence;
		
		\item[(ii)] If $s>p-2$ and $0\leq \a < \frac{s-(p-2)}{p}$, then 
		$$
		\MaD{\a}{p}{s} = F(p,p(\a+1)-2, s - (p(\a+1)-2)).
		$$
		Moreover, for every $\beta>0$,
		\begin{equation}\label{eq: equiv norm maD}
			\begin{split}
		\nm{f}_{\MaD{\a}{p}{s}}&\asymp \abs{f(0)} + \left (\sup_{a\in\D}\frac{1}{(1-\abs{a}^2)^{s-(p(\a+1)-2)}}\int_{S(a)}\abs{f'(z)}^p(1-\abs{z}^2)^sdA(z) \right )^{\frac{1}{p}} \\
		&\asymp \abs{f(0)} + \left (\sup_{a\in\D}\int_\D\abs{f'(z)}^p\frac{(1-\abs{z}^2)^s(1-\abs{a}^2)^\beta}{\abs{1-\overline{a}z}^{s-(p(\a+1)-2)+\beta}}dA(z) \right )^{\frac{1}{p}};
			\end{split}
		\end{equation}
		\item[(iii)] If $s>p-2$ and $0\leq \a < \frac{s-(p-2)}{p}$, then 
		$$
		\maD{\a}{p}{s} = F_0(p,p(\a+1)-2, s - (p(\a+1)-2)).
		$$
	\end{enumerate}
\end{proposition}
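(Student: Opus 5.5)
The plan is to compute $\nm{f\circ\phi_a - f(a)}_{D^p_s}^p$ by a change of variables and compare it with the Carleson-box quantities of Lemma~\ref{lemmalet: equiv norm Disk-Box}. Since $(f\circ\phi_a)'(w)=f'(\phi_a(w))\phi_a'(w)$, substituting $z=\phi_a(w)$ gives
\[
\nm{f\circ\phi_a-f(a)}_{D^p_s}^p=\int_\D \abs{f'(z)}^p\abs{\phi_a'(z)}^{2-p}(1-\abs{\phi_a(z)}^2)^s\,dA(z).
\]
I would then insert $\abs{\phi_a'(z)}=\frac{1-\abs{a}^2}{\abs{1-\overline{a}z}^2}$ and $1-\abs{\phi_a(z)}^2=\frac{(1-\abs a^2)(1-\abs z^2)}{\abs{1-\overline a z}^2}$. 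This yields
\[
(1-\abs a^2)^{p\a}\nm{f\circ\phi_a-f(a)}_{D^p_s}^p=\int_\D\abs{f'(z)}^p(1-\abs z^2)^s\frac{(1-\abs a^2)^{2-p+s+p\a}}{\abs{1-\overline az}^{2(2-p+s)}}\,dA(z).
\]

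Write $\lambda:=s-(p(\a+1)-2)$ and $\beta:=2-p+s+p\a$. Then $\lambda+\beta=2(2-p+s)$, so the kernel is exactly $\frac{(1-\abs a^2)^\beta}{\abs{1-\overline a z}^{\lambda+\beta}}$.

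\emph{Part (ii).} Here $s>p-2$, so $\beta>0$, and $\a<\frac{s-(p-2)}p$ gives $\lambda>0$. With $q=p(\a+1)-2$ the integrand can be rewritten as $\abs{f'}^p(1-\abs z^2)^q(1-\abs{\phi_a(z)}^2)^{\lambda}$, times the factor $(1-\abs a^2)^{p\a+\dots}$ recombined. More simply, $(1-\abs z^2)^{s}=(1-\abs z^2)^q(1-\abs z^2)^\lambda$, and combined with the kernel this gives $(1-\abs z^2)^q(1-\abs{\phi_a(z)}^2)^\lambda$ multiplied by $\bigl(\frac{1-\abs a^2}{\abs{1-\overline a z}^2}\bigr)^{\beta-\lambda}$, which is not harmless. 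I therefore avoid that route.

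Instead I apply Lemma~\ref{lemmalet: equiv norm Disk-Box}(i) with $d\mu=\abs{f'}^p(1-\abs z^2)^s\,dA$, exponent $\lambda$ playing the role of $s$, and the above $\beta$. This shows that the supremum is comparable to $\sup_a \mu(S(a))/(1-\abs a^2)^{\lambda}$. A second application of the lemma with arbitrary $\beta>0$ gives the other expression in \eqref{eq: equiv norm maD}. Applying the lemma once more with $\beta=\lambda$, together with $(1-\abs z^2)^s=(1-\abs z^2)^{q}(1-\abs z^2)^{\lambda}$, identifies the same box quantity with the $F(p,q,\lambda)$ seminorm. Adding $\abs{f(0)}$ gives the norm equivalence.

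\emph{Part (iii).} Repeat the argument using Lemma~\ref{lemmalet: equiv norm Disk-Box}(ii) for the vanishing versions.

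\emph{Part (i).} Now $\lambda\le 0$. Use $\abs{1-\overline az}\ge \max(1-\abs a,1-\abs z)$ and write $\abs{1-\overline az}^{-(\lambda+\beta)}=\abs{1-\overline az}^{-\beta}\abs{1-\overline az}^{-\lambda}$. Since $-\lambda\ge0$, we have $\abs{1-\overline az}^{-\lambda}\le 2^{-\lambda}$. Together with $(1-\abs a^2)^\beta\le \abs{1-\overline a z}^\beta\cdot 2^\beta$, the kernel is bounded by a constant, which gives $\rho_{\a,p,s}(f)\lesssim\nm{f}_{D^p_s}$. The reverse inequality follows from the case $a=0$, since $\nm{f-f(0)}_{D^p_s}=\nm{f'}_{A^p_s}$.

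\emph{Main obstacle.} The delicate step is checking the exponent bookkeeping so that Lemma~\ref{lemmalet: equiv norm Disk-Box} applies with both exponents positive. In the edge case $s=p-2$ we get $\beta=p\a$, which is positive only if $\a>0$. This case falls under (i) anyway, since then $\frac{s-(p-2)}p=0$.
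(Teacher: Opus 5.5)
Your proposal is correct and is essentially the paper's proof. The change of variables gives $(1-\abs{a}^2)^{p\a}\nm{f\circ\phi_a-f(a)}_{D^p_s}^p=\int_\D \abs{f'(z)}^p(1-\abs{z}^2)^s\frac{(1-\abs{a}^2)^{s-(p-2)+p\a}}{\abs{1-\overline{a}z}^{2(s-(p-2))}}\,dA(z)$. Lemma~\ref{lemmalet: equiv norm Disk-Box} (i) and (ii) then yield parts (ii) and (iii), with $q+\lambda=s$ giving the $F(p,q,\lambda)$ identification, while in part (i) the kernel is bounded by a constant. Your direct kernel bound $4^{p\a}$ and the explicit lower bound from $a=0$ are only cosmetic variations on the paper's estimate, which first replaces $(1-\abs{a}^2)^{p\a}$ by $(1-\abs{a}^2)^{s-(p-2)}$ and bounds by $4^{s-(p-2)}$.
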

\begin{proof}
	For (i) we just need to check that, if $s\geq p-2$ and $\a \geq\frac{s-(p-2)}{p}$ then
	$$
	(1-\abs{a}^2)^{p\a}\nm{f\circ\phi_a - f(a)}_{D^p_s}^p \lesssim \int_\D \abs{f'(z)}^p(1-\abs{z}^2)^s dA(z),\quad \forall a\in\D,\, f\in\H(\D).
	$$
Now if $a\in\D$ and $f\in D^p_s$, by the change of variables $w=\phi_a(z)$
	\begin{multline*}
			(1-\abs{a}^2)^{p\a}\nm{f\circ\phi_a - f(a)}_{D^p_s}^p \leq (1-\abs{a}^2)^{s-(p-2)}\nm{f\circ\phi_a - f(a)}_{D^p_s}^p \\
			\begin{aligned}
			&=(1-\abs{a}^2)^{s-(p-2)} \int_\D \abs{f'(\phi_a(z))}^p \abs{\phi_a'(z)}^2 \abs{\phi_a'(z)}^{p-2}(1-\abs{z}^2)^sdA(z) \\
			&= (1-\abs{a}^2)^{s-(p-2)} \int_\D \abs{f'(\phi_a(z))}^p \abs{\phi_a'(z)}^2 (1-\abs{\phi_a(z)}^2)^{p-2}(1-\abs{z}^2)^{s-(p-2)}dA(z) \\
			&= \int_\mathbb{D} |f'(\phi_a(z))|^p |\phi'_a(z)|^2(1-|\phi_a(z)|^2)^s |1-\overline{a}z|^{2(s-(p-2))}d A(z) \\
			&\leq 4^{s-(p-2)}\int_\D \abs{f'(w)}^p  (1-\abs{w}^2)^s dA(w). 
		\end{aligned}
	\end{multline*}
	Now, to prove the equality in (ii) by Lemma~\ref{lemmalet: equiv norm Disk-Box} (i) and the definition of $F(p,q,s)$ is enough checking that
	$$
	\sup_{a\in\D} (1-\abs{a}^2)^{p\a}\nm{f\circ\phi_a - f(a)}_{D^p_s}^p \asymp\sup_{a\in\D} \frac{1}{(1-\abs{a}^2)^{s-(p(\a+1)-2)}}\int_{S(a)}\abs{f'(z)}^p(1-\abs{z}^2)^s dA(z),
	$$
	first we notice that, working in the same way as previous case, for every $a\in\D$ 
	\begin{equation*}
		\begin{split}
			\nm{f\circ\phi_a - f(a)}_{D^p_s}^p &=  \int_\D \abs{f'(\phi_a(z))}^p \abs{\phi_a'(z)}^2 (1-\abs{\phi_a(z)}^2)^{p-2}(1-\abs{z}^2)^{s-(p-2)}dA(z) \\
			&= \int_\D \abs{f'(w)}^p(1-\abs{w}^2)^{p-2}(1-\abs{\phi_a(w)}^2)^{s-(p-2)}dA(w) \\
			&= \int_\D \abs{f'(w)}^p(1-\abs{w}^2)^{s} \frac{(1-\abs{a}^2)^{s-(p-2)}}{\abs{1-\overline{a}w}^{2(s-(p-2))}}dA(w),
		\end{split}
	\end{equation*}
	
	then by applying Lemma~\ref{lemmalet: equiv norm Disk-Box} (i) with $\lambda = s + p\a - (p-2)$

	\begin{equation*}
		\begin{split}
		\sup_{a\in\D} (1-\abs{a}^2)^{p\a}\nm{f\circ\phi_a - f(a)}_{D^p_s}^p &= \sup_{a\in\D}\int_\D \abs{f'(w)}^p(1-\abs{w}^2)^s \frac{(1-\abs{a}^2)^{s-(p-2)+p\a}}{\abs{1-\overline{a}w}^{2(s-(p-2))}}dA(w) \\
		&\asymp\sup_{a\in\D} \frac{1}{(1-\abs{a}^2)^{s-(p(\a+1)-2)}}\int_{S(a)}\abs{f'(z)}^p(1-\abs{z}^2)^s dA(z).
		\end{split}
	\end{equation*}

	The proof for (iii) follows from the exact same argument applying Lemma~\ref{lemmalet: equiv norm Disk-Box} (ii).
\end{proof}

Throughout this work we will be changing between the three equivalent norms in \eqref{eq: equiv norm maD} depending in the context, to simplify the notation, we will always write $\nm{f}_{\MaD{\a}{p}{s}}$ to refer to any of these norms. 

As a byproduct, we obtain the following result about the separability of the space.
\begin{corollary}\label{coro: density polynomials}
	Let $1<p<\infty$, $p-2\leq s<\infty$ and $0\leq \a <\infty$, then
	\begin{enumerate}
		\item[(i)] If $\a\geq \frac{s-(p-2)}{p}$, $\MaD{\a}{p}{s}$ is separable and the polynomials are a dense subset;
		\item[(ii)] If $s>p-2$ and $0\leq \a < \frac{s-(p-2)}{p}$, $\MaD{\a}{p}{s}$ is non-separable and the closure of polynomials in $\MaD{\a}{p}{s}$ coincides with $\maD{\a}{p}{s}$.
	\end{enumerate}
\end{corollary}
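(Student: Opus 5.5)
The corollary is a translation of Proposition~\ref{prop: classif Mad} into known facts about $D^p_s$ and the $F(p,q,s)$ spaces, so the plan is to treat the two cases separately and quote the relevant results.

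\emph{Case (i).} By Proposition~\ref{prop: classif Mad}(i), $\MaD{\a}{p}{s}=D^p_s$ with equivalent norms. So it suffices to show that polynomials are dense in $D^p_s$, since separability then follows from density of polynomials with rational coefficients.

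For $f\in D^p_s$ I will use the dilations $f_r(z)=f(rz)$. The standard argument for weighted Bergman spaces gives $f_r'\to f'$ in $A^p_s$. Each $f_r$ is holomorphic on a neighbourhood of $\overline{\D}$, so its Taylor partial sums converge to $f_r$ uniformly together with their derivatives on $\overline{\D}$. Hence they converge in $D^p_s$, because $(1-|z|^2)^s$ is integrable for $s>-1$; note $s\ge p-2>-1$. Since $f(0)$ is matched trivially, polynomials are dense.

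\emph{Case (ii).} Proposition~\ref{prop: classif Mad}(ii)--(iii) identifies $\MaD{\a}{p}{s}=F(p,q,\sigma)$ and $\maD{\a}{p}{s}=F_0(p,q,\sigma)$, where
\[
q=p(\a+1)-2,\qquad \sigma=s-q>0 .
\]
The norms are equivalent. We have $q+\sigma=s>-1$, and $q>-2$ because $\a\ge0$ and $p>1$. We are therefore in the nontrivial range, where Zhao's theory applies: $F(p,q,\sigma)$ is a non-separable Banach space and the closure of the polynomials is exactly $F_0(p,q,\sigma)$. Transporting this through the isomorphism gives (ii).

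For a self-contained argument for non-separability, I would exhibit an uncountable family of functions that are pairwise at distance $\ge c>0$. The candidates are rotations $f(e^{i\theta}z)$ of a single function $f\in F\setminus F_0$. A lacunary series normalized in the weighted Bloch space $B^{(q+2)/p}$ should also work, using the equivalence of the $F(p,q,\sigma)$ norm with Carleson-box integrals from \eqref{eq: equiv norm maD}.

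For the density statement, the inclusion $\overline{\text{polys}}\subseteq \maD{\a}{p}{s}$ holds because $\maD{\a}{p}{s}$ is closed and contains the polynomials. Closedness follows from the box characterization and the triangle inequality. Polynomials belong to $\maD{\a}{p}{s}$ because a bounded $f'$ gives
\[
\int_{S(a)}|f'|^p(1-|z|^2)^s\,dA \lesssim (1-|a|)^{s+2},
\]
and $s+2>\sigma$ since $q>-2$. The converse inclusion, namely that $f_r\to f$ for $f\in F_0$, is the standard ``little-o'' dilation argument, which I will quote from Zhao.

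The main obstacle is this last step: the uniform control of the box integrals of $f_r'-f'$ near the boundary, splitting into $|a|$ close to $1$ (where the little-o condition is used) and $|a|$ in a compact set (where $f_r'\to f'$ uniformly). I plan to cite it from \cite{zhaosurvey} rather than reprove it.
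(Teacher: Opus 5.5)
Your proposal is correct and follows the same route as the paper, which records this corollary as an immediate byproduct of Proposition~\ref{prop: classif Mad}. In case (i) one uses $M_\alpha(D^p_s)=D^p_s$ and the density of polynomials there; in case (ii) one transfers Zhao's facts that $F(p,q,\sigma)$ is non-separable and that its polynomial closure is $F_0(p,q,\sigma)$, exactly as you do. The only weak point is your optional self-contained aside on non-separability. For $\sigma\le 1$, which covers the $BMOA$, $Q_s$ and Morrey cases, a Hadamard-gap series lies in $F(p,q,\sigma)$ only if it already lies in $F_0(p,q,\sigma)$, so lacunary series cannot witness non-separability there. Also, rotations of an arbitrary $f\in F\setminus F_0$ are not obviously uniformly separated; you would need a specific $f$, for instance one with a single boundary singularity. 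Since you ultimately cite Zhao for this point, the proof stands.
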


Given an analytic function $\phi:\D\to\D$, the composition operator associated to $\phi$ is $C_\phi (f) = f\circ \phi$, our next objective is to study the boundedness of composition operators acting on $\MaD{\a}{p}{s}$, for our purpouses in this work will be enough to consider composition operators associated to univalent symbols. 

We start by studying its boundedness on $D^p_s$. The next result is probably known to experts, for the sake of completeness we give an explicit proof. 
\begin{proposition}\label{prop: compo dirichlet}
	Let $1<p<\infty$, $p-2\leq s<\infty$ and $\phi:\D\to\D$ analytic. Then
	\begin{enumerate}
		\item[(i)] If $s>p-1$, then $C_\phi$ is bounded on $D^p_s$;
		\item[(ii)] If $2\leq p < \infty$, $p-2\leq s \leq p-1$ and $\phi\in\U$, then $C_\phi$ is bounded on $D^p_s$;
		\item[(iii)] If $1< p <2$, $0\leq s \leq p-1$ and $\phi\in\U$, then $C_\phi$ is bounded on $D^p_s$. 
	\end{enumerate}
	Moreover,
	$$
	\nm{C_\phi}_{D^p_s} \lesssim
	\begin{cases}
		\log\!\dfrac{e}{1 - \abs{\phi(0)}}, & \text{if } s=p-2, \\[1em]
		\left( \dfrac{1}{1 - \abs{\phi(0)}} \right)^{\frac{s-(p-2)}{p}}, & \text{if } s > p-2.
	\end{cases}
	$$
\end{proposition}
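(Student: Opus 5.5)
We will estimate $\nm{f\circ\phi}_{D^p_s}$ by a change of variables and split the argument according to whether $s>p-1$ (any $\phi$) or $s\le p-1$ (univalent $\phi$). It suffices to bound $\abs{f(\phi(0))}$ and $\nm{(f\circ\phi)'}_{A^p_s}$ by the stated constant times $\nm{f}_{D^p_s}$. For the point evaluation we use the pointwise estimate $\abs{f'(z)}\lesssim \nm{f'}_{A^p_s}(1-\abs{z}^2)^{-(s+2)/p}$ and integrate along the radius from $0$ to $\phi(0)$. The exponent is $(s+2)/p = 1+\frac{s-(p-2)}{p}$. This gives $\abs{f(\phi(0))}\le \abs{f(0)}+ C\nm{f'}_{A^p_s}\log\frac{e}{1-\abs{\phi(0)}}$ when $s=p-2$, and $\lesssim (1-\abs{\phi(0)})^{-\frac{s-(p-2)}{p}}\nm{f}_{D^p_s}$ when $s>p-2$. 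This already produces the claimed form of the norm bound, so the rest only has to show that the derivative part carries at most the same constant.

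\emph{Case $s>p-1$.} Here $D^p_s=A^p_{s-p}$ with equivalent norms, and $s-p>-1$. Boundedness of composition operators on standard weighted Bergman spaces, via Littlewood subordination, gives a constant $\lesssim \big(\frac{1+\abs{\phi(0)}}{1-\abs{\phi(0)}}\big)^{(s-p+2)/p}$. This is exactly $(1-\abs{\phi(0)})^{-\frac{s-(p-2)}{p}}$, so (i) follows.

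\emph{Case $s\le p-1$, $\phi\in\U$.} We write $\abs{(f\circ\phi)'}^p=\abs{f'(\phi)}^p\abs{\phi'}^2\abs{\phi'}^{p-2}$ and change variables $w=\phi(z)$, which is injective. This gives
\[
\int_\D\abs{(f\circ\phi)'}^p(1-\abs{z}^2)^sdA=\int_{\phi(\D)}\abs{f'(w)}^p\abs{\phi'(\phi^{-1}(w))}^{p-2}(1-\abs{\phi^{-1}(w)}^2)^s\,dA(w).
\]
When $p\ge2$, the Schwarz–Pick lemma gives $\abs{\phi'(z)}\le \frac{1-\abs{\phi(z)}^2}{1-\abs{z}^2}$, so the weight is bounded by $(1-\abs{w}^2)^{p-2}(1-\abs{z}^2)^{s-(p-2)}$ with $z=\phi^{-1}(w)$. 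Since $s-(p-2)\ge 0$, it remains to bound $(1-\abs{z}^2)/(1-\abs{w}^2)$ from above. For that we apply the Schwarz–Pick/Koebe-type inequality $1-\abs{\phi(z)}\ge c\,(1-\abs{\phi(0)})(1-\abs{z})$, which holds for every self-map, to the automorphism-normalized map. This yields $(1-\abs{z}^2)^{s-(p-2)}\lesssim (1-\abs{\phi(0)})^{-(s-(p-2))}(1-\abs{w}^2)^{s-(p-2)}$. The integral is then $\lesssim (1-\abs{\phi(0)})^{-(s-(p-2))}\nm{f'}^p_{A^p_s}$, and taking $p$-th roots gives the exponent $\frac{s-(p-2)}{p}$. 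When $s=p-2$ the constant is simply bounded, and the logarithm comes only from the point evaluation.

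\emph{Case $1<p<2$ (iii) — the main obstacle.} Here $\abs{\phi'}^{p-2}$ has a negative exponent, and Schwarz–Pick runs the wrong way. For univalent $\phi$ we use the Koebe distortion bound $\abs{\phi'(z)}(1-\abs{z}^2)\asymp \operatorname{dist}(\phi(z),\partial\phi(\D))\le 1-\abs{\phi(z)}$ only in the form $\abs{\phi'}\gtrsim$ something, and this fails globally. So we plan instead to use Hölder's inequality: write $\abs{\phi'}^{p}=\abs{\phi'}^{2\cdot\frac p2}$ and interpolate between the $A^2_{s'}$ change-of-variables estimate (the case $p=2$, already handled) and the Bloch-type bound. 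Alternatively, since $0\le s\le p-1$, we can reduce to Littlewood–Paley type estimates of Carleson-measure form, namely that $\abs{\phi'}^{p-2}$ on the pullback defines a Carleson measure for univalent maps. This uses the hypothesis $s\ge0$ together with the fact that $\phi'$ belongs to $H^{q}$-type classes for univalent $\phi$ (Prawitz/Baernstein integral means). This is the step we expect to require the most care, and we would first try the Hölder interpolation route, keeping track of the $(1-\abs{\phi(0)})$ dependence via the automorphism normalization $\phi_{\phi(0)}\circ\phi$.
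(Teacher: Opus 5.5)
Your point-evaluation estimate and your part (i) (Littlewood subordination on $A^p_{s-p}$) are correct. Your part (ii) (Schwarz--Pick together with $(1-\abs{z}^2)(1-\abs{\phi(0)})\lesssim 1-\abs{\phi(z)}^2$ and the univalent change of variables) is also correct. All three are essentially the paper's argument.

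\textbf{The gap is in (iii).} You give a plan, not a proof, and the plan as stated would not close. If the ``Bloch-type bound'' means the pointwise estimate $\abs{f'(w)}\lesssim \nm{f}_{D^p_s}(1-\abs{w}^2)^{-(s+2)/p}$ applied to the complementary H\"older factor, it produces $\int_{\D}(1-\abs{\phi(z)}^2)^{-(s+2)}(1-\abs{z}^2)^s\,dA(z)$. This integral already diverges for $\phi(z)=z$. An endpoint at ``the case $p=2$'' is also unavailable. For $p<2$, $f'\in A^p_s$ is not controlled in $A^2_s$, and passing to a larger $A^2_t$ loses the way back to $D^p_s$. The Carleson-measure/integral-means alternative is only named, not substantiated.

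Your starting point $\abs{\phi'}^p=\abs{\phi'}^{2\cdot\frac p2}$ is the right one. The missing idea is that H\"older (exponents $2/p$ and $2/(2-p)$) must split $\abs{f'\circ\phi}^p$ and the weight over both factors, with the conjugate factor free of $\phi'$ altogether:
\[
\int_{\D}\abs{f'(\phi)}^p\abs{\phi'}^p(1-\abs{z}^2)^s\,dA\le\Big(\int_{\D}\abs{f'(\phi)}^p\abs{\phi'}^2(1-\abs{z}^2)^s\,dA\Big)^{\frac p2}\Big(\int_{\D}\abs{f'(\phi)}^p(1-\abs{z}^2)^s\,dA\Big)^{\frac{2-p}{2}}.
\]

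\emph{First factor.} Treat it exactly as in your case (ii). Since $s\ge 0$, you have $(1-\abs{z}^2)^s\lesssim(1-\abs{\phi(0)})^{-s}(1-\abs{\phi(z)}^2)^s$. This is precisely where the hypothesis $s\ge0$ enters. The univalent change of variables then bounds the first factor by a constant times $(1-\abs{\phi(0)})^{-s}\nm{f'}_{A^p_s}^p$.

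\emph{Second factor.} It is $\nm{f'\circ\phi}_{A^p_s}^p$, that is, the composition operator acting on $f'$ in the Bergman space $A^p_s$. Littlewood subordination bounds it by a constant times $(1-\abs{\phi(0)})^{-(s+2)}\nm{f'}_{A^p_s}^p$, for an arbitrary self-map.

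Since $\frac p2\, s+\frac{2-p}{2}(s+2)=s-(p-2)$, combining the two factors yields exactly the claimed bound $(1-\abs{\phi(0)})^{-\frac{s-(p-2)}{p}}$.
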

\begin{proof}
	(i) Let $\phi:\D\to \D$ analytic function and $s>p-1$ then $D^p_s=A^p_{s-p}$ with equivalence of norms, so by Littlewood's subordination principle \cite[Theorem 11.6]{zhu}
	$$
	\nm{f\circ}\phi_{D^p_s} \lesssim \left (\frac{1}{1-\abs{\phi(0)}} \right)^{\frac{s-(p-2)}{p}}\nm{f}_{D^p_s},\quad \forall f\in\H(\D).
	$$
	Now we deal with the remaining cases,
	let $\phi:\D\to \D$ univalent function and $f\in D^p_s$, then 
	$$
	\nm{f\circ \phi}_{D^p_s} = \abs{f\circ\phi (0)} + \left (\int_{\D} \abs{f'(\phi(z))}^p\abs{\phi'(z)}^p (1-\abs{z}^2)^s dA(z)\right )^{\frac{1}{p}}.
	$$
	The bound for the first term follows from the following standard growth estimates (see \cite[Theorem 3.1]{GirPel})
	$$
	\abs{f(z)} \lesssim
	\begin{cases}
		\log\!\dfrac{e}{1 - \abs{z}}\nm{f}_{D^p_s}, & \text{if } s=p-2, \\[1em]
		 \dfrac{\nm{f}_{D^p_s}}{(1 - \abs{z})^{\frac{s-(p-2)}{p}}}, & \text{if } s > p-2.
	\end{cases}
	$$
	applied to $z=\phi(0)$. 
	
	For the second term, we will deal each case by separated way. First if $2\leq p <\infty$ and $p-2\leq s \leq p-1$, by applying Schwarz's Lemma,  and the change of variables $w=\phi(z)$
	\begin{equation*}
		\begin{split}
			&\int_{\D} \abs{f'(\phi(z))}^p\abs{\phi'(z)}^p (1-\abs{z}^2)^s dA(z) \\ &\leq \int_\D \abs{f'(\phi(z))}^p\abs{\phi '(z)}^2 (1-\abs{\phi(z)}^2)^{p-2}(1-\abs{z}^2)^{s-(p-2)}dA(z) \\ &\lesssim \left (\frac{1}{1-\abs{\phi(0)}} \right )^{s-(p-2)} \int_\D 	\abs{f'(\phi(z))}^p\abs{\phi '(z)}^2 (1-\abs{\phi(z)}^2)^{s}dA(z) \\
			&\leq \left (\frac{1}{1-\abs{\phi(0)}} \right )^{s-(p-2)} \int_\D 	\abs{f'(w)}^p (1-\abs{w}^2)^{s}dA(z) \leq \left (\frac{1}{1-\abs{\phi(0)}} \right )^{s-(p-2)} \nm{f}_{D^p_s}^p.
		\end{split}
	\end{equation*}
Notice that we have used the inequality 

\[ (1-|z|^2)(1-|\phi(0)|^2)\leq 2(1-|\phi(z)|^2), \]
which follows by applying Scwharz's inequality to the function $\phi_{\phi(0)}\circ\phi $. 
	Now if $1< p <2$ and $0\leq s \leq p-1$, by Hölder's inequality
	\begin{equation*}
		\begin{split}
	&\int_{\D} \abs{f'(\phi(z))}^p\abs{\phi'(z)}^p (1-\abs{z}^2)^s dA(z) \leq \left (\int_{\D} \abs{f'(\phi(z))}^p\abs{\phi'(z)}^2 (1-\abs{z}^2)^s dA(z) \right)^{\frac{p}{2}} \\
	&\times \left (\int_{\D} \abs{f'(\phi(z))}^p (1-\abs{z}^2)^s dA(z) \right)^{\frac{2-p}{2}} = I(f)^{\frac{p}{2}}II(f)^{\frac{2-p}{2}}.
		\end{split}
	\end{equation*}
	For the first term, working in the same way as the previous case
	$$
	I(f) \lesssim \left (\frac{1}{1-\abs{\phi(0)}} \right )^{s}\int_\D \abs{f'(\phi(z))}^p \abs{\phi'(z)}^2(1-\abs{\phi(z)}^2)^s dA(z) \leq \left (\frac{1}{1-\abs{\phi(0)}} \right )^{s} \nm{f}_{D^p_s}^p,
	$$
	on  the other hand, $II(f)=\nm{f'\circ\phi}_{A^p_s}^p$, then by \cite[Theorem 11.6]{zhu}
	$$
	II(f)\lesssim \left (\frac{1}{1-\abs{\phi(0)}} \right )^{s+2}\nm{f'}_{A^p_s}^p \leq \left (\frac{1}{1-\abs{\phi(0)}} \right )^{s+2}\nm{f}_{D^p_s}^p.
	$$
	Then by joining both inequalities
	$$
			\int_{\D} \abs{f'(\phi(z))}^p\abs{\phi'(z)}^p (1-\abs{z}^2)^s dA(z) \lesssim \left (\frac{1}{1-\abs{\phi(0)}} \right )^{s-(p-2)} \nm{f}_{D^p_s}^p,
	$$
	concluding the proof.
\end{proof}
By Proposition~\ref{prop: classif Mad} this last result also covers the boundedness of the composition operators on $\MaD{\a}{p}{s}$ for the same range of parameters $p$ and $s$ whenever $\a\geq \frac{s-(p-2)}{p}$. Now we can deal with Theorem~\ref{thm:1.1} 

\begin{Prf}{\em{  Theorem~\ref{thm:1.1}}.}
	We deal with all cases at the same time. Let $\phi:\D\to \D$ analytic (resp. univalent), for each $a\in\D$ consider the function $\sigma_a$ defined by 
	$$
	\sigma_a(z)=\phi_{\phi(a)}\circ \phi \circ \phi_a(z),\quad z\in\D,
	$$
	it defines an analytic (resp. univalent) function with $\sigma_a(\D)\subset\D$ and $\sigma_a(0)=0$, then by Proposition~\ref{prop: compo dirichlet}
	\begin{equation}\label{eq: sigma_a}
	\nm{f\circ\sigma_a}_{D^p_s}\lesssim \nm{f}_{D^p_s},\quad \forall a\in\D, \, f\in\H(\D).
	\end{equation}
	Now, let $f\in\MaD{\a}{p}{s}$ then
	$$
	\nm{f\circ\phi}_{\MaD{\a}{p}{s}}= \abs{f(\phi(0))}+\sup_{a\in\D}(1-\abs{a}^2)^\a\nm{f\circ\phi\circ\phi_a - f\circ\phi(a)}_{D^p_s},
	$$
	for the first term, by \cite[Proposition 2.5]{zhaosurvey}
	$$
	\abs{f(\phi(0))}\lesssim
	\begin{cases}
		\log\!\dfrac{e}{1 - \abs{\phi(0)}}\nm{f}_{\MaD{\a}{p}{s}}, & \text{if } \a=0, \\[1em]
		\left( \dfrac{1}{1 - \abs{\phi(0)}} \right)^\a\nm{f}_{\MaD{\a}{p}{s}}, & \text{if } \a > 0.
	\end{cases}
	$$
	On the other hand, if $a\in\D$, by Schwarz's inequality and \eqref{eq: sigma_a}
	\begin{equation*}
		\begin{split}
			&(1-\abs{a}^2)^\a\nm{f\circ\phi\circ\phi_a - f\circ\phi(a)}_{D^p_s}=(1-\abs{a}^2)^\a\nm{f\circ\phi_{\phi(a)}\circ\sigma_a - f\circ\phi(a)}_{D^p_s} \\
			&\lesssim (1-\abs{a}^2)^\a \nm{f\circ\phi_{\phi(a)} - f(\phi(a))}_{D^p_s} \lesssim \left (\frac{1-\abs{\phi(a)}^2}{1-\abs{\phi(0)}} \right )^{\a}\nm{f\circ\phi_{\phi(a)} - f(\phi(a))}_{D^p_s} \\
			&\leq \left (\frac{1}{1-\abs{\phi(0)}}\right )^\a \nm{f}_{\MaD{\a}{p}{s}},
		\end{split}
	\end{equation*}
	concluding the proof.
\end{Prf}

To conclude this section we introduce the following test functions that will be useful throughout our work, the following result follows from \cite[Lemma 2.5]{OrtegaFabrega} (see also \cite[Theorem 4.2]{zhaosurvey}).
\begin{lemma}\label{lemma: test funct a>0}
	Let $p, s, \alpha $ in the admissible range, then the following holds:
	\begin{enumerate}
		\item[(i)] Consider or every $w\in\D$ the function
		$$
		\l_\omega(z)=\log\frac{e}{1-\overline{w}z},
		$$
		then $\sup\limits_{w\in\D} \nm{l_w}_{\MaD{0}{p}{s}}<\infty$;
		\item[(ii)] If $\a,\lambda>0$, consider for every $w\in\overline{\D}$  the function
		$$
		f_{w,\alpha,\lambda} = \frac{(1-\abs{w}^2)^\lambda}{(1-\overline{w}z)^{\a+\lambda}},
		$$
		 Then $$\sup\limits_{w\in\overline{\D}}\nm{f_{w,\alpha,\lambda}}_{\MaD{\a}{p}{s}}<\infty.$$
	\end{enumerate}
\end{lemma}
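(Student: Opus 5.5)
We need to bound the test functions uniformly in the norms of Proposition~\ref{prop: classif Mad}(ii). We use the second equivalent norm in \eqref{eq: equiv norm maD}, with a free parameter $\beta>0$ chosen large, and reduce everything to the standard Forelli--Rudin integral estimate
\[
\int_\D \frac{(1-|z|^2)^{t}}{|1-\overline{a}z|^{c_1}|1-\overline{w}z|^{c_2}}\,dA(z),
\]
which is controlled by products of powers of $(1-|a|^2)$, $(1-|w|^2)$ and $|1-\overline{a}w|$ once $t>-1$ (the Zhao-type two-point lemma). Write $\sigma:=s-(p(\a+1)-2)>0$; positivity is exactly the condition $\a<\frac{s-(p-2)}{p}$.

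For (i), take $\a=0$ and $l_w'(z)=\overline{w}/(1-\overline{w}z)$. We must bound
\[
\sup_{a}\int_\D \frac{(1-|z|^2)^s(1-|a|^2)^\beta}{|1-\overline{w}z|^{p}\,|1-\overline{a}z|^{\sigma+\beta}}\,dA(z),
\]
where $\sigma=s-p+2$. The exponents satisfy $s+2=p+\sigma$, so the total homogeneity is balanced.

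The first step splits the disk into two regions. On $\{|1-\overline{w}z|\ge |1-\overline{a}z|\}$ we bound $|1-\overline{w}z|^{-p}\le |1-\overline{a}z|^{-p}$. The integral is then $\int (1-|z|^2)^s(1-|a|^2)^\beta|1-\overline{a}z|^{-(s+2+\beta)}\,dA\asymp 1$, since $s>-1$ and $\beta>0$. On the complementary region we bound $|1-\overline{a}z|^{-(\sigma+\beta)}\le|1-\overline{w}z|^{-(\sigma+\beta)}$ and $(1-|a|^2)^\beta\lesssim |1-\overline{w}z|^{\beta}$. The latter holds because $1-|a|\le |1-\overline{a}z|\le|1-\overline{w}z|$. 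After this, only $\int(1-|z|^2)^s|1-\overline{w}z|^{-(s+2)}\,dA$ remains, which diverges logarithmically.

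The second step therefore needs more care on that region. We keep a factor $(1-|a|^2)^{\beta}/|1-\overline{a}z|^{\beta}\le 1$ and use $|1-\overline{w}z|^{-p}\,|1-\overline{a}z|^{-\sigma}$ together with $|1-\overline{a}z|\ge |1-\overline{w}z|\cdot$const on a Carleson box. Equivalently, it is cleaner to use the first norm in \eqref{eq: equiv norm maD} (Carleson boxes). We must show
\[
\int_{S(a)}|1-\overline{w}z|^{-p}(1-|z|^2)^s\,dA\lesssim (1-|a|)^{\sigma}.
\]
On $S(a)$ we have $|1-\overline{w}z|\gtrsim \max(1-|z|,\operatorname{dist})$. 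Integrating in polar coordinates over a box of side $h=1-|a|$ gives $\int_0^h r^s\int_0^h (r+x)^{-p}\,dx\,dr\lesssim\int_0^h r^{s-p+1}\,dr\asymp h^{s-p+2}$. This requires $p>1$ and $s-p+1>-1$, i.e. $s>p-2$, both of which hold in the admissible range. So the box estimate handles (i) directly and is the main step; the worst case is when the box is centered at $w/|w|$.

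For (ii), $f'_{w}(z)=(\a+\lambda)\overline{w}(1-|w|^2)^\lambda(1-\overline{w}z)^{-(\a+\lambda+1)}$, with $h=1-|a|$ and $d=1-|w|$. We need
\[
\int_{S(a)}\frac{d^{\lambda p}(1-|z|^2)^s}{|1-\overline{w}z|^{p(\a+\lambda+1)}}\,dA\lesssim h^{\sigma}.
\]
On $S(a)$, $|1-\overline{w}z|\gtrsim d+r+x$, where $r=1-|z|$ and $x$ is the angular distance. The same polar computation gives the bound $d^{\lambda p}\int_0^h r^s(d+r)^{-p(\a+\lambda+1)+1}\,dr$. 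We split into the cases $r\le d$ and $r>d$. Using $(d+r)^{-p\lambda}\le \min(d,r)^{-p\lambda}$ when $d\lesssim h$, and the trivial bound $d^{-p(\a+\lambda+1)+1}$ when $d>h$, each case is at most $\lesssim h^{s-p\a-p+2}=h^{\sigma}$. This uses $\sigma>0$; $\lambda>0$ is used to make the $r<d$ part converge against $d^{\lambda p}$. Finally, $|f(0)|=(1-|w|^2)^\lambda\le1$. The bookkeeping of this case split is the only real obstacle, and its outcome is uniform in $w\in\overline{\D}$. For $|w|=1$ the function vanishes unless $\lambda$ is absent, so that case is trivial.
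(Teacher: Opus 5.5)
Your proof is correct in substance, but it takes a different route from the paper. The paper gives no computation for this lemma: it simply quotes the uniform bounds from Ortega--F\`abrega (Lemma 2.5) and Zhao's survey on $F(p,q,s)$ (Theorem 4.2), which transfer to $M_\alpha(D^p_s)$ through Proposition~\ref{prop: classif Mad}.

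You instead verify the Carleson-box form of the norm in \eqref{eq: equiv norm maD} directly. You use $|1-\overline{w}z|\gtrsim (1-|w|)+(1-|z|)+|\arg z-\arg w|$ and integrate first in the angular variable. This is self-contained and uniform in $w$ by construction. It also shows exactly where $p>1$ (resp.\ $p(\alpha+1)>1$) and $\sigma>0$ are used. The paper's route is shorter and places the lemma in the known $F(p,q,s)$ theory, but it hides the computation. Your opening Forelli--Rudin/two-region attempt is a dead end that you rightly abandon, and it can be deleted.

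The bookkeeping at the end of (ii) is misstated, though it is easy to repair.
\begin{itemize}
\item The inequality you want is $(d+r)^{-p\lambda}\le d^{-p\lambda}$, not $(d+r)^{-p\lambda}\le \min(d,r)^{-p\lambda}$. On $\{r<d\}$ the latter produces $r^{s-p\lambda}$, which is not integrable at $0$ once $p\lambda\ge s+1$. Also, $\lambda>0$ plays no role in convergence there.
\item No case split is needed at all. Since $1-|w|^2\le 2|1-\overline{w}z|$, you have $|f'_{w,\alpha,\lambda}(z)|\le (\alpha+\lambda)2^{\lambda}|1-\overline{w}z|^{-(\alpha+1)}$. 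So (ii) is the same box computation as (i) with $p$ replaced by $p(\alpha+1)$, giving $\int_0^h r^{s+1-p(\alpha+1)}\,dr=h^{\sigma}/\sigma$.
\end{itemize}

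This cleaned-up version holds for every $\lambda\ge 0$ and every $w\in\overline{\D}$, including $\lambda=0$. That matters because the paper later applies the lemma to the functions $(1-\overline{\zeta}z)^{-\alpha}$, i.e.\ $\lambda=0$, with $\zeta\in\D$ or $\zeta\in\T$. This happens in the proofs of Propositions~\ref{propo: Tg bounded Mad univalent}, \ref{propo: Tg compact mad} and \ref{prop: proper subs 2}. The statement as written, with $\lambda>0$, does not cover those functions, since $f_{w,\alpha,\lambda}\equiv 0$ for $|w|=1$.
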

\section{Univalent functions}
\begin{lemma}\label{lemma: contains Qp}
	Let $p, s $ in the admissible range, then there exists $0<q<\infty$ such that 
	$$
	Q_q\subset \MaD{0}{p}{s},\quad \text{and} \quad Q_{q,0}\subset \maD{0}{p}{s}.
	$$
\end{lemma}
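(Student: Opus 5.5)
We identify both spaces with $F(p,q,s)$ spaces via Proposition~\ref{prop: classif Mad} and prove the inclusion through a pointwise Hölder estimate. By Proposition~\ref{prop: classif Mad}(ii) with $\a=0$,
$$\MaD{0}{p}{s}=F(p,p-2,s-(p-2)),\qquad \maD{0}{p}{s}=F_0(p,p-2,s-(p-2)).$$
Likewise $Q_q=F(2,0,q)$ and $Q_{q,0}=F_0(2,0,q)$. Write $t=s-(p-2)>0$. Using the Carleson box form of the norm in \eqref{eq: equiv norm maD}, it suffices to show that for some $q>0$ and all $a\in\D$,
$$
\frac{1}{(1-|a|^2)^{t}}\int_{S(a)}|f'|^p(1-|z|^2)^{s}\,dA \lesssim \|f\|_{Q_q}^p\quad\text{(or a bound vanishing as }|a|\to1\text{ when }f\in Q_{q,0}\text{)},
$$
where $Q_q$ is also normed by $\sup_a (1-|a|^2)^{-q}\int_{S(a)}|f'|^2(1-|z|^2)^q\,dA$ (Lemma~\ref{lemmalet: equiv norm Disk-Box}).

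\textbf{Case $p\ge 2$.} Every $Q_q\subset\B$, with $(1-|z|^2)|f'(z)|\lesssim\|f\|_{Q_q}$, and this holds with a little-$o$ for $Q_{q,0}\subset\B_0$. Split
$$|f'|^p(1-|z|^2)^s=\bigl(|f'|(1-|z|^2)\bigr)^{p-2}\,|f'|^2(1-|z|^2)^{s-p+2}.$$
Take $q=t=s-p+2>0$. Then
$$\frac{1}{(1-|a|^2)^{t}}\int_{S(a)}|f'|^p(1-|z|^2)^s\,dA\lesssim \|f\|_\B^{p-2}\,\frac{1}{(1-|a|^2)^q}\int_{S(a)}|f'|^2(1-|z|^2)^q\,dA .$$
For the little-o version, the Bloch factor on $S(a)$ is at most $\sup_{S(a)}(1-|z|)|f'|$, which tends to $0$ as $|a|\to1$. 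The box factor is bounded. This gives both inclusions.

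\textbf{Case $1<p<2$, $s\ge0$.} Here the Bloch trick cannot be used, since the exponent $p-2$ is negative. Instead apply Hölder with exponents $2/p$ and $2/(2-p)$ on $S(a)$:
$$\int_{S(a)}|f'|^p(1-|z|^2)^s\,dA\le\Bigl(\int_{S(a)}|f'|^2(1-|z|^2)^{q}\,dA\Bigr)^{p/2}\Bigl(\int_{S(a)}(1-|z|^2)^{\frac{2s-pq}{2-p}}\,dA\Bigr)^{\frac{2-p}{2}}.$$
Choose $q>0$ small enough that $\frac{2s-pq}{2-p}>-1$; this is possible since $s\ge0$. The last integral is then $\asymp(1-|a|)^{\frac{2s-pq}{2-p}+2}$. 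The total power of $(1-|a|)$ is
$$\frac{pq}{2}+\frac{2s-pq}{2}+(2-p)=s-p+2=t,$$
exactly what is needed. Hence
$$\frac{1}{(1-|a|^2)^t}\int_{S(a)}|f'|^p(1-|z|^2)^s\,dA\lesssim\Bigl(\frac{1}{(1-|a|^2)^{q}}\int_{S(a)}|f'|^2(1-|z|^2)^q\,dA\Bigr)^{p/2},$$
which gives both the bounded and the vanishing inclusions.

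\textbf{Main obstacle.} The main step is the exponent bookkeeping in the Hölder case, in particular checking integrability. The boundary case $s=0$ needs $q<0$ for the weight exponent in the Hölder integral to be nonnegative. However, one only needs $\frac{-pq}{2-p}>-1$, i.e. $q<(2-p)/p$, so any small positive $q$ works. Alternatively, in the $p\ge2$ case one can use any $q\le t$ via the inclusions $Q_{q_1}\subset Q_{q_2}$ for $q_1<q_2$.
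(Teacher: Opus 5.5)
Your proof is correct and rests on the same two mechanisms as the paper's proof. For $p\ge 2$ you absorb $|f'|^{p-2}$ with the Bloch bound and take $q=s-(p-2)$; for $1<p<2$ you use H\"older with exponents $2/p$ and $2/(2-p)$. The paper instead applies H\"older on the whole disk to get $\|g\|_{D^p_s}\lesssim\|g\|_{D^2_q}$, then applies this to $g=f\circ\phi_a-f(a)$, splitting off $s>p-1$ via $\B\subset M_0(D^p_s)$ and treating $s=0$ with an $\varepsilon$-shift. Your H\"older estimate on Carleson boxes handles all of $1<p<2$, $s\ge 0$ in one exponent computation.
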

\begin{proof}
	Both embeddings follow from the same ideas, we detail here the first embedding, we divide the proof in different cases. 
	
	If $s>p-1$, let $f\in \B$ then
	\begin{equation*}
		\begin{split}
			\nm{f\circ\phi_a - f(a)}_{D^p_s}^p &\lesssim \nm{f}_{\B}^p\int_{\D} \left (\frac{\abs{\phi_a'(z)}}{1-\abs{\phi_a(z)}^2}\right )^p (1-\abs{z}^2)^s dA(z)\\
			&= \nm{f}_{\B}^p\int_\D (1-\abs{z}^2)^{s-p}dA(z)\lesssim \nm{f}_{\B}^p,
		\end{split}
	\end{equation*}
	so $f\in \MaD{0}{p}{s}$ and $Q_q\subset \MaD{0}{p}{s}$ for any $0<q<\infty$. 
	
	If $2\leq p <\infty$ and $p-2<s\leq p-1$, lets check that $Q_q\subset \MaD{0}{p}{s}$ with $q=s-(p-2)$. Let $f\in Q_q$ and $a\in\D$
	\begin{equation*}
		\begin{split}
			\int_\D \abs{f'(z)}^p (1-\abs{z}^2)^{p-2}(1-\abs{\phi_a(z)}^2)^{s-(p-2)}dA(z)&\leq \nm{f}_\B^{p-2}\int_\D \abs{f'(z)}^2(1-\abs{\phi_a(z)}^2)^{q}dA(z)\\ &\leq \nm{f}_{\B}^{p-2}\nm{f}_{Q_q}^2 \lesssim \nm{f}_{Q_q}^p,		
		\end{split}
	\end{equation*}
	and by taking supremum on $a\in\D$, $f\in\MaD{0}{p}{s}$. 
	
	If $1< p<2$ and $0<s\leq p-1$, by Hölder's inequality $\nm{f}_{D^p_s}\lesssim\nm{f}_{D^2_s}$ for every $f\in\H(\D)$, so trivially $Q_s=\MaD{0}{2}{s}\subset \MaD{0}{p}{s}$. 
	
	If $1< p<2$ and $s=0$, take $ \e < \frac{2-p}{2}$ and $q=\frac{2\e}{p}$, then by Hölder's inequality,
	\begin{equation*}
		\begin{split}
			\nm{f}_{D^p_s}^p &\leq  \int_{\D} \abs{f'(z)}^p(1-\abs{z}^2)^{\e}(1-\abs{z}^2)^{-\e}dA(z) \\
			&\leq \left (\int_\D \abs{f'(z)}^2(1-\abs{z}^2)^{\frac{2\e}{p}} \right)^{\frac{p}{2}}dA(z)\left (\int_\D (1-\abs{z}^2)^{\frac{-2\e}{2-p}} dA(z)\right)^{\frac{2-p}{2}} \\
			&\asymp \nm{f}_{D^2_q}^p,\quad \forall f\in\H(\D),
		\end{split}
	\end{equation*}
	so $Q_q=\MaD{0}{2}{q}\subset \MaD{0}{p}{s}$. 
\end{proof}
\begin{lemma}\label{lemma: univalent bloch equiv maD}
	Let $p, s $ in the admissible range. If $f\in \U$, then 
	$$
	f\in\MaD{0}{p}{s}\Leftrightarrow f\in \B \quad \text{and} \quad f\in\maD{0}{p}{s}\Leftrightarrow f\in \B_0.
	$$
\end{lemma}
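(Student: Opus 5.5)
The plan is a sandwich argument. One inclusion holds for all analytic functions; the other uses Pommerenke's theorem that a univalent Bloch function lies in $BMOA$, and more generally in every $Q_q$.

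\emph{Direction $M_0(D^p_s)\Rightarrow \B$ (and its little version).} From Proposition~\ref{prop: classif Mad}(ii) with $\a=0$ we have $M_0(D^p_s)=F(p,p-2,s-(p-2))$ with $s-(p-2)>0$. The general embedding $F(p,q,s)\subset B^{(q+2)/p}$ recalled in Section~\ref{sec:CompositionOperators} then gives $M_0(D^p_s)\subset B^{1}=\B$.

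To see this directly, fix $a$ and apply subharmonicity of $|f'|^p$ on the pseudo-hyperbolic disc $\Delta(a,1/2)$. On that disc $(1-|z|^2)\asymp(1-|a|^2)$ and $(1-|\phi_a(z)|^2)\asymp 1$. This yields
\[
(1-|a|^2)^p|f'(a)|^p\lesssim \int_{\Delta(a,1/2)}|f'|^p(1-|z|^2)^{p-2}(1-|\phi_a(z)|^2)^{s-(p-2)}\,dA .
\]
The same estimate shows that the $F_0$ condition forces $(1-|a|^2)|f'(a)|\to0$. Hence $m_0(D^p_s)\subset\B_0$. This direction does not use univalence.

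\emph{Direction $\B\Rightarrow M_0(D^p_s)$ for $f\in\U$.} Lemma~\ref{lemma: contains Qp} provides some $q>0$ with $Q_q\subset M_0(D^p_s)$ and $Q_{q,0}\subset m_0(D^p_s)$. So it suffices to show that a univalent Bloch function belongs to $Q_q$ for every $q>0$, and that a univalent $\B_0$ function belongs to $Q_{q,0}$. For $q\ge1$ this follows from Pommerenke's theorem: $f\in\U\cap\B$ gives $f\in BMOA=Q_1\subset Q_q$, and $f\in\U\cap\B_0$ gives $f\in VMOA\subset Q_{q,0}$.

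For $0<q<1$ I would use the area argument directly. Write
\[
\int_{S(a)}|f'(z)|^2(1-|z|^2)^q\,dA(z).
\]
Split $S(a)$ into Whitney-type pieces and bound $(1-|z|^2)^q$ on each. The Bloch bound on $f$ controls how the image $f(S(a))$ spreads. Univalence means $\int|f'|^2\,dA$ over a region equals the area of its image, so no multiplicity counting is needed. For a univalent Bloch function the area of $f(S(a))$ is $\lesssim \|f\|_\B^2(1-|a|)$, which is the Pommerenke/Carleson estimate behind $BMOA$.

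The main obstacle is this estimate, in particular getting the decay rate $(1-|a|^2)^q$ for small $q$. My first attempt would rely on the known theorem of Aulaskari–Xiao–Zhao that $\U\cap\B=\U\cap Q_q$ for all $q>0$, with the vanishing analogue $\U\cap\B_0=\U\cap Q_{q,0}$. Applying this theorem finishes the proof at once. If a self-contained argument is required, I would decompose $S(a)$ dyadically and use the Pommerenke area estimate at each scale. The gain would come from $\sum_k 2^{-kq}<\infty$, which is weaker than what is available at scale $1$.
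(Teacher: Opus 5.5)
Your proof is correct and is essentially the paper's. The inclusions $M_0(D^p_s)\subset\B$ and $m_0(D^p_s)\subset\B_0$ hold for all analytic $f$; the paper cites Zhao's embedding $F(p,q,s)\subset B^{(q+2)/p}$, and your subharmonicity argument proves the same thing. The converse combines Lemma~\ref{lemma: contains Qp} with the Aulaskari--Lappan--Xiao--Zhao theorem that $\U\cap\B\subset Q_q$ and $\U\cap\B_0\subset Q_{q,0}$ for every $q>0$.

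One caution concerns your fallback sketch. The bound ``area of $f(S(a))\lesssim\|f\|_\B^2(1-|a|)$'' is false. For $f(z)=\log\frac{1+z}{1-z}$ that area is infinite, because Pommerenke's estimate only holds with the weight $(1-|z|^2)$. The correct self-contained route for $0<q<1$ goes through Möbius translates, as in Lemma~\ref{lemma: log qp 1} and Proposition~\ref{propo: log qp 2} with $\omega\equiv1$. By univalence, bound $\int_{|z|<r}|(f\circ\phi_a)'|^2\,dA$ by $\sup_{|z|\le r}|f\circ\phi_a(z)-f(a)|^2\lesssim\|f\|_\B^2\log^2\frac{2}{1-r}$. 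Then integrate against $(1-r)^{q-1}\,dr$.
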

\begin{proof}
	First, notice that by \cite[Corollary 3.6]{zhaosurvey}, $\MaD{0}{p}{s}\subset \B$ and $\maD{0}{p}{s}\subset \B_0$, so we just need to check that if $f\in\U\cap\B$ (resp. $\U\cap\B_0$) then $f\in\MaD{0}{p}{s}$ (resp. $f\in\maD{0}{p}{s}$). 
	
	As $f\in\U$ by \cite[Theorem 6.1]{Aluaskari1997} $f\in Q_s$ (resp. $Q_{s,0}$) for every $0<s<\infty$, and by Lemma~\ref{lemma: contains Qp} there is always a $Q_s$ (resp. $Q_{s,0}$) space inside $\MaD{0}{p}{s}$ (resp. $\maD{0}{p}{s}$), so $f\in\MaD{0}{p}{s}$ (resp. $\maD{0}{p}{s}$).
\end{proof}
It will be useful to obtain the logarithmic version for these results. 

The logarithmic Bloch space $\B_{\log}$ is defined as the space of analytic functions $f$ in $\D$ such that
$$
\sup_{z\in\D} \left (\log\frac{e}{1-\abs{z}^2}\right )(1-\abs{z}^2)\abs{f'(z)}<\infty,
$$
and $f\in\B_{\log,0}$ if
$$
\lim_{\abs{z}\to 1^-}\left (\log\frac{e}{1-\abs{z}^2}\right )(1-\abs{z}^2)\abs{f'(z)}=0.
$$
For $0<p<\infty$, $-2<q<\infty$ and $0<s<\infty$, the spaces $F_{\log}(p,q,s)$ consists in $f\in\H(\D)$ such that
$$
\sup_{a\in\D}\left (\log\frac{1}{1-\abs{a}^2}\right)^p \int_\D \abs{f'(z)}^p(1-\abs{z}^2)^q(1-\abs{\phi_a(z)}^2)^s dA(z)<\infty,
$$
and for the little-o version we say that $f\in F_{\log,0}(p,q,s)$ if
$$
	\lim_{\abs{a}\to 1^-}\left (\log\frac{1}{1-\abs{a}^2}\right)^p\int_\D \abs{f'(z)}^p(1-\abs{z}^2)^q(1-\abs{\phi_a(z)}^2)^s dA(z)=0.
$$

In particular, for $0<q<\infty$ we denote by
$$
 Q_{q,\log,0}=F_{\log,0}(2,0,q),
$$
notice that, if $q>1$ then $Q_{q,\log,0}=\B_{\log,0}$.
By standard arguments and mimicking the proof of Lemma~\ref{lemma: contains Qp} we can obtain the following result.
\begin{lemma}\label{lemma: Mad log cont Qp}
	Let $p, s$ in the admissible range, then there exists $0<q<\infty$ such that
	$$
	Q_{q,\log,0}\subset F_{\log,0}(p,p-2,s-(p-2))\subset \B_{\log,0}.
	$$
\end{lemma}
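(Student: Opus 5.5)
We prove the two inclusions separately, following the proof of Lemma~\ref{lemma: contains Qp} and carrying the factor $\left(\log\frac{1}{1-\abs{a}^2}\right)^p$ along. This factor depends only on $a$, not on the integration variable, so every pointwise or H\"older estimate used there passes through unchanged. Write
\[
I_a(f)=\int_\D \abs{f'(z)}^p(1-\abs{z}^2)^{p-2}(1-\abs{\phi_a(z)}^2)^{s-(p-2)}\,dA(z).
\]
Membership in $F_{\log,0}(p,p-2,s-(p-2))$ means $\left(\log\frac{1}{1-\abs{a}^2}\right)^p I_a(f)\to0$ as $\abs{a}\to1^-$.

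\textbf{Right-hand inclusion.} Fix $a$ with $\abs{a}$ close to $1$ and restrict $I_a(f)$ to the pseudo-hyperbolic disc $\Delta(a,1/2)$. On this disc $1-\abs{\phi_a(z)}^2\asymp1$ and $1-\abs{z}^2\asymp1-\abs{a}^2$. Subharmonicity of $\abs{f'}^p$ then gives
\[
(1-\abs{a}^2)^p\abs{f'(a)}^p\lesssim (1-\abs{a}^2)^{p-2}\int_{\Delta(a,1/2)}\abs{f'}^p\,dA\lesssim I_a(f).
\]
Multiplying by $\left(\log\frac{1}{1-\abs{a}^2}\right)^p$ yields $f\in\B_{\log,0}$, since $\log\frac{1}{1-\abs{a}^2}$ and $\log\frac{e}{1-\abs{a}^2}$ are comparable near the boundary.

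\textbf{Left-hand inclusion.} We choose $q$ and argue case by case exactly as in Lemma~\ref{lemma: contains Qp}. Since $Q_{q,\log,0}\subset\B_{\log,0}\subset\B$, the Bloch seminorm is available throughout.
\begin{itemize}
\item[(a)] If $s>p-1$, take any $q>1$, so that $Q_{q,\log,0}=\B_{\log,0}$. Split $I_a(f)$ into $\Delta(a,r)$ and its complement.
\begin{itemize}
\item On $\Delta(a,r)$, use $(1-\abs{z}^2)\abs{f'(z)}\le\e(a)/\log\frac{1}{1-\abs{a}^2}$, where $\e(a)\to0$. This holds because $1-\abs{z}^2\asymp1-\abs{a}^2$ there.
\item On the complement, use $\abs{f'(z)}(1-\abs{z}^2)\le\e/\log\frac{e}{1-\abs{z}^2}$ together with the integrability of $(1-\abs{\phi_a(z)}^2)^{s-(p-2)}(1-\abs{z}^2)^{-2}$ after the change of variables $w=\phi_a(z)$. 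The exponent $s-p>-1$ makes this weight integrable.
\item The main obstacle is the region where $1-\abs{z}$ is much larger than $1-\abs{a}$. There $\log\frac{1}{1-\abs{z}}$ is smaller than $\log\frac{1}{1-\abs{a}}$, so the pointwise bound alone does not absorb the log factor. We handle it with the standard logarithmic weighted estimate
\[
\left(\log\frac{1}{1-\abs{a}^2}\right)^p\int_\D\frac{(1-\abs{\phi_a(w)}^2)^{s-p+2}}{(1-\abs{w}^2)^2\left(\log\frac{e}{1-\abs{w}^2}\right)^p}\,dA(w)=O(1).
\]
We would prove this by splitting into dyadic annuli around the Carleson box of $a$. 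It is the only genuinely new computation, and it is where $s-(p-2)>1$ is used.
\end{itemize}
\item[(b)] If $p\ge2$ and $p-2<s\le p-1$, set $q=s-(p-2)$. Then
\[
I_a(f)\le\nm{f}_\B^{p-2}\int_\D\abs{f'}^2(1-\abs{\phi_a}^2)^{q}\,dA.
\]
Multiplying by $\left(\log\frac{1}{1-\abs{a}^2}\right)^p$ and using $\left(\log\frac{1}{1-\abs{a}^2}\right)^{p}=\left(\log\frac{1}{1-\abs{a}^2}\right)^2\left(\log\frac{1}{1-\abs{a}^2}\right)^{p-2}$ would leave an unbounded factor $\left(\log\frac{1}{1-\abs{a}^2}\right)^{p-2}$. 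To remove it, we bound $\abs{f'(z)}^{p-2}$ pointwise by $\left(\B_{\log}\text{-norm}/\bigl((1-\abs{z}^2)\log\frac{e}{1-\abs{z}^2}\bigr)\right)^{p-2}$. We then split again into $\Delta(a,r)$, where this log is comparable to $\log\frac{1}{1-\abs{a}^2}$, and its complement, where the factor $(1-\abs{\phi_a}^2)^q$ decays. The complement is handled as in case (a), lowering $q$ slightly if needed.
\item[(c)] If $1<p<2$, apply H\"older's inequality exactly as in Lemma~\ref{lemma: contains Qp}, with $q=s$ when $s>0$ and $q=2\e/p$ when $s=0$. H\"older gives
\[
I_a(f)^{1/p}\lesssim\left(\int_\D\abs{f'}^2(1-\abs{\phi_a}^2)^{q}\,dA\right)^{1/2},
\]
possibly after adjusting the weights so that both sides are M\"obius-invariant quantities. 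The log factor then transfers directly, because it raises both sides to matching powers: the $p$-th power on the left and the square on the right.
\end{itemize}
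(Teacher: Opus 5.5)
Your plan matches what the paper intends; the paper gives no details beyond ``standard arguments and mimicking the proof of Lemma~\ref{lemma: contains Qp}''. The right-hand inclusion, case (a) and case (c) are sound, and you correctly notice that in (a) and (b) the logarithm does not pass through for free. In (a) your weighted estimate is true when $s-(p-2)>1$. You only need to add one detail there. The $\B_{\log,0}$ smallness holds only for $\abs{z}>R_\e$. On $\abs{z}\le R_\e$ one has $(1-\abs{\phi_a(z)}^2)^{s-(p-2)}\lesssim_{R_\e}(1-\abs{a}^2)^{s-(p-2)}$, which beats $(\log\frac{1}{1-\abs{a}^2})^p$.

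\textbf{The gap is in case (b).} Put $s'=s-(p-2)\in(0,1]$. You split into a fixed pseudo-hyperbolic disc $\Delta(a,r)$ and treat its complement ``as in case (a)''. This cannot work, because case (a) relies on pointwise Bloch-type bounds, and for $s'\le1$ these give nothing: $\int_\D(1-\abs{\phi_a(z)}^2)^{s'}(1-\abs{z}^2)^{-2}\,dA(z)=\int_\D(1-\abs{w}^2)^{s'-2}\,dA(w)=\infty$. The divergence comes from a neighbourhood of $\T$, which lies in the complement of $\Delta(a,r)$. Moreover, $(1-\abs{\phi_a}^2)^q$ does not decay on that complement; it equals $(1-r^2)^q$ on $\partial\Delta(a,r)$. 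On the part where $1-\abs{z}\lesssim 1-\abs{a}$ the logarithms are comparable or larger, so the integral condition defining $Q_{q,\log,0}$ must be used there, not pointwise bounds.

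\textbf{The fix: split by the size of $1-\abs{z}^2$ relative to $a$.} Take $q<s'$ and $E_a=\set{z\in\D:1-\abs{z}^2\le(1-\abs{a}^2)^{1/2}}$.
\begin{itemize}
\item On $E_a$ one has $\log\frac{e}{1-\abs{z}^2}\ge\frac12\log\frac{1}{1-\abs{a}^2}$. Bounding $(\abs{f'(z)}(1-\abs{z}^2))^{p-2}$ by the $\B_{\log}$ seminorm therefore absorbs $(\log\frac{1}{1-\abs{a}^2})^{p-2}$. This part is then $\lesssim(\log\frac{1}{1-\abs{a}^2})^2\int_\D\abs{f'}^2(1-\abs{\phi_a}^2)^q\,dA\to0$.
\item Off $E_a$ one has $1-\abs{\phi_a(z)}^2\le\frac{4(1-\abs{a}^2)}{1-\abs{z}^2}\le4(1-\abs{a}^2)^{1/2}$. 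Hence $(1-\abs{\phi_a}^2)^{s'}\le4^{s'-q}(1-\abs{a}^2)^{(s'-q)/2}(1-\abs{\phi_a}^2)^q$. This part is $\lesssim\nm{f}_\B^{p-2}\nm{f}_{Q_q}^2(\log\frac{1}{1-\abs{a}^2})^p(1-\abs{a}^2)^{(s'-q)/2}\to0$.
\end{itemize}
Your remark about ``lowering $q$ slightly'' is the right ingredient. It must be paired with this $a$-dependent split and with the $Q_q$ integral, not with the argument of case (a).
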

Now in the same way as \cite[Corollary 3.3]{ChalDask}, we are going to prove that the logarithmic version of \cite[Theorem 6.1]{Aluaskari1997}.  

Let $\omega$ be a strictly positive weight of the class $C^1(\D)$. We assume the following regularity condition on $\omega$:
\begin{equation}\label{eq: weight def}
	(1-\abs{z}^2)\abs{\nabla \omega(z)}\leq C_\omega \omega(z),\quad \forall z\in\D,
\end{equation}
for some $C_\omega>0$. 
\begin{lemma}\label{lemma: log qp 1}
	Let $0<s<1$, and suppose that $\omega$ is a weight which satisfies \eqref{eq: weight def} with $C_\omega < s$. Let also $f\in\H(\D)$ such that
	$$
	\abs{f'(z)}(1-\abs{z}^2)\omega(z)\leq K,\quad \forall z\in\D,
	$$
	where $K>0$. Then,
	$$
	\int_0^1 \frac{ \sup_{a\in\D,\abs{z}\leq r} (\omega(a)\abs{f\circ\phi_a(z) - f(a)})^2}{(1-r)^{1-s}}dr<+\infty.
	$$
\end{lemma}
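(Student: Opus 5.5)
The plan is a pointwise estimate for $\omega(a)\abs{f\circ\phi_a(z)-f(a)}$, uniform in $a\in\D$, that grows only like a power of $\log\frac{1}{1-r}$ on $\abs{z}\leq r$. Since $s>0$, the integral $\int_0^1 (\log\frac{e}{1-r})^2(1-r)^{s-1}\,dr$ converges, and the claim follows. This is the weighted analogue of the classical fact that a Bloch function grows at most logarithmically in the hyperbolic distance from $a$; the role of the hypothesis $C_\omega<s$ is to keep the weight from destroying that growth rate by more than a factor $(1-r)^{-C_\omega}$, which remains integrable against $(1-r)^{s-1}$ after squaring.

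Fix $a\in\D$ and $z$ with $\abs{z}\leq r$, and set $w=\phi_a(z)$. Integrate $f'$ along the hyperbolic geodesic from $a$ to $w$, which is the image under $\phi_a$ of the radius $[0,z]$. Parametrize it as $\zeta(t)=\phi_a(tz)$, $0\leq t\leq 1$. Then
\[
\abs{f(w)-f(a)}\leq\int_0^1 \abs{f'(\zeta(t))}\,\abs{\phi_a'(tz)}\,\abs{z}\,dt\leq K\int_0^1\frac{\abs{z}\,dt}{(1-t^2\abs{z}^2)\,\omega(\zeta(t))},
\]
where we used $\abs{\phi_a'(u)}=(1-\abs{\phi_a(u)}^2)/(1-\abs{u}^2)$ and the hypothesis on $f$. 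Multiplying by $\omega(a)$, it remains to control the ratio $\omega(a)/\omega(\zeta(t))$.

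The regularity condition \eqref{eq: weight def} says that $\abs{\nabla\log\omega}\leq C_\omega/(1-\abs{\cdot}^2)$. Hence $\log\omega$ is Lipschitz with respect to the metric $\abs{d\zeta}/(1-\abs{\zeta}^2)$, which is half the hyperbolic metric $\beta$, so $\abs{\log\omega(u)-\log\omega(v)}\leq \frac{C_\omega}{2}\beta(u,v)$. Since $\phi_a$ is an isometry, $\beta(a,\zeta(t))=\beta(0,tz)=\log\frac{1+t\abs{z}}{1-t\abs{z}}$. This gives
\[
\frac{\omega(a)}{\omega(\zeta(t))}\leq\Big(\frac{1+t\abs{z}}{1-t\abs{z}}\Big)^{C_\omega/2}\lesssim (1-t\abs{z})^{-C_\omega/2}.
\]
Plugging this in and substituting $u=t\abs{z}$, we get
\[
\omega(a)\abs{f\circ\phi_a(z)-f(a)}\lesssim K\int_0^{r}\frac{du}{(1-u)^{1+C_\omega/2}}\lesssim \frac{K}{(1-r)^{C_\omega/2}}
\]
uniformly in $a$. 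In the degenerate case $C_\omega=0$ the bound is instead $K\log\frac{e}{1-r}$.

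Squaring, the integrand is $\lesssim K^2(1-r)^{-C_\omega}(1-r)^{s-1}$, which is integrable on $(0,1)$ because $C_\omega<s$ (for $C_\omega=0$ a $\log^2$ factor appears, still integrable since $s>0$). The only point needing care is the Lipschitz estimate for $\log\omega$ along the geodesic. It holds by integrating $\nabla\log\omega$ along that curve, and uses only that the weight is $C^1$ and strictly positive.
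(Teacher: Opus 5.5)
Your proof is correct and is essentially the paper's argument, which defers to \cite[Lemma 3.1]{ChalDask}: integrate $f'$ along the hyperbolic geodesic $\phi_a([0,z])$, and control $\omega(a)/\omega(\zeta(t))$ by $\bigl(\frac{1+t|z|}{1-t|z|}\bigr)^{C_\omega/2}$ using \eqref{eq: weight def}. The only difference is cosmetic: the paper bounds the weight ratio by its maximum $\bigl(\frac{1+r}{1-r}\bigr)^{C_\omega/2}$ and multiplies by the hyperbolic length $\frac12\log\frac{1+r}{1-r}$, while you keep the ratio inside the integral and save the logarithm; in both versions, integrability against $(1-r)^{s-1}$ comes down to $C_\omega<s$.
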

\begin{proof}
	By following the proof of \cite[Lemma 3.1]{ChalDask}, we get for $z=re^{i\theta}$ 
	$$
	(\omega(a)\abs{f\circ\phi_a(z)-f(a)})^2 \leq K^2 \left (\frac{1+r}{1-r} \right )^{C_\omega} \left (\frac{1}{2}\log \frac{1+r}{1-r} \right )^2,
	$$
	then by applying that $1+C_\omega-s<1$,
	$$
	\int_0^1 (1-r)^s \sup_{a\in\D,\abs{z}\leq r} (\omega(a)\abs{f\circ\phi_a(z) - f(a)})^2dr \lesssim \int_0^1 \frac{1}{(1-r)^{1+C_\omega-s}}\left (\log \frac{1+r}{1-r}\right )^2 dr <\infty.
	$$
\end{proof}
\begin{proposition}\label{propo: log qp 2}
	Let $0<s<1$, $f:\D \to \C$ univalent and $\omega$ a weight as in Lemma~\ref{lemma: log qp 1}. Suppose also that
	$$
	\lim_{\abs{z}\to 1}\abs{f'(z)}(1-\abs{z}^2)\omega(z)=0.
	$$
	Then,
	$$
	\lim_{\abs{a}\to 1}\omega(a)^2\int_{\D}\abs{f'(z)}^2 (1-\abs{\phi_a(z)}^2)^sdA(z)=0.
	$$
\end{proposition}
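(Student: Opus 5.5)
The plan is to adapt the proof of Aulaskari--Stegenga--Xiao \cite[Theorem 6.1]{Aluaskari1997} and its logarithmic variant \cite[Corollary 3.3]{ChalDask}, with $\omega$ inserted. Set $g_a=f\circ\phi_a-f(a)$, which is univalent with $g_a(0)=0$. The change of variables $w=\phi_a(z)$ gives
\[
\int_\D|f'(z)|^2(1-|\phi_a(z)|^2)^s\,dA(z)=\int_\D|g_a'(w)|^2(1-|w|^2)^s\,dA(w),
\]
so it suffices to show that $\omega(a)^2\int_\D|g_a'|^2(1-|w|^2)^s\,dA\to0$ as $|a|\to1$.

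\textbf{Step 1 (univalent area estimate).} For univalent $g$ with $g(0)=0$, I would use the layer-cake estimate behind \cite[Theorem 6.1]{Aluaskari1997}: writing $(1-|w|^2)^s\lesssim s\int_{|w|}^1(1-r)^{s-1}dr$ and using Fubini,
\[
\int_\D|g'|^2(1-|w|^2)^s\,dA\lesssim \int_0^1 (1-r)^{s-1}\,\mathrm{Area}\big(g(\{|w|<r\})\big)\,dr\lesssim \int_0^1 (1-r)^{s-1}M_\infty(r,g)^2\,dr .
\]
The last inequality holds because $g$ is univalent, so the area of the image of $\{|w|<r\}$ equals $\int_{|w|<r}|g'|^2\,dA$ and is at most $\pi M_\infty(r,g)^2$. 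Multiplying by $\omega(a)^2$ yields
\[
\omega(a)^2\int_\D|f'|^2(1-|\phi_a|^2)^s\,dA\lesssim \int_0^1\frac{\sup_{|z|\le r}\big(\omega(a)|g_a(z)|\big)^2}{(1-r)^{1-s}}\,dr .
\]

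\textbf{Step 2 (dominated convergence in $r$).} The hypothesis $\lim_{|z|\to1}|f'(z)|(1-|z|^2)\omega(z)=0$ implies $|f'|(1-|z|^2)\omega\le K$ on $\D$, since $f'$ is bounded on compact sets and $\omega$ is continuous. Lemma~\ref{lemma: log qp 1} then shows that the integrand above is dominated, uniformly in $a$, by an integrable function of $r$. It therefore suffices to prove that for each fixed $r<1$,
\[
\sup_{|z|\le r}\omega(a)|f(\phi_a(z))-f(a)|\to0 \quad\text{as } |a|\to1 .
\]

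\textbf{Step 3 (pointwise limit).} Integrate along the hyperbolic geodesic from $a$ to $\phi_a(z)$, whose hyperbolic length is at most $\beta(r)=\tfrac12\log\frac{1+r}{1-r}$. Let $\e(\rho)=\sup_{|\zeta|\ge\rho}|f'(\zeta)|(1-|\zeta|^2)\omega(\zeta)$. Along the path, $1-|\zeta|\gtrsim_r 1-|a|$, so every point $\zeta$ satisfies $|\zeta|\to1$ uniformly as $|a|\to1$. Condition \eqref{eq: weight def} gives $\omega(a)/\omega(\zeta)\le e^{C_\omega d(a,\zeta)}\le \big(\tfrac{1+r}{1-r}\big)^{C_\omega}$, exactly as in the estimate inside Lemma~\ref{lemma: log qp 1}. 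Hence
\[
\omega(a)|g_a(z)|\le \Big(\frac{1+r}{1-r}\Big)^{C_\omega}\beta(r)\,\e\big(\rho(a,r)\big)\to0 .
\]
Dominated convergence then concludes the proof.

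\textbf{Main obstacle.} I expect Step 1 to be the delicate part: the area estimate has to be applied to $g_a$ and not to $f$, with the constant independent of $a$. This is exactly the reduction used in \cite{Aluaskari1997}, and it needs care with the factor $(1-|w|^2)^s$ near the boundary, handled by integrating in $r$ with weight $(1-r)^{s-1}$. The restriction $C_\omega<s$ is needed only to obtain the integrable majorant in Step 2, which Lemma~\ref{lemma: log qp 1} already supplies.
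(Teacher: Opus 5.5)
Your proposal is correct and is essentially the paper's argument. The paper also writes the integral via Fubini as $\int_0^1(1-r)^{s-1}\int_{\D_r}|f_a'|^2\,dA\,dr$, bounds the inner integral by $\sup_{|z|\le r}|f_a|^2$ using univalence, and uses Lemma~\ref{lemma: log qp 1} to make the range $r\in[R_0,1)$ uniformly small in $a$. On $r\in[0,R_0]$ it combines the little-o hypothesis with $\omega(a)/\omega(\zeta)\le e^{C_\omega\delta(a,\zeta)}$. Your dominated-convergence formulation just packages this split, and you control the compact part through $\sup_{|z|\le r}|f_a|$ by geodesic integration, where the paper bounds $|f_a'|$ pointwise.

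One small slip: in Step 3 you need $1-|\zeta|\lesssim_r 1-|a|$ (not $\gtrsim_r$) to force $|\zeta|\to1$; it holds because $\zeta$ lies within hyperbolic distance $\beta(r)$ of $a$.
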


\begin{proof}
 Let $f$ be such a function. Setting $\D_r=\overline{D(0,r)}$ and denote $f_a=f\circ\phi_a - f(a)$, by applying a change of variables and Fubini's theorem if $0<R<1$,
	\begin{multline*}
		\omega(a)^2\int_{\D}\abs{f'(w)}^2(1-\abs{\phi_a(w)})^sdA(w) \asymp \omega(a)^2\int_0^1 \frac{1}{(1-r)^{1-s}}\int_{\D_r}\abs{f'_a(z)}^2dA(z)\, dr \\
		\begin{aligned}
			&= \omega(a)^2\int_0^R \frac{1}{(1-r)^{1-s}}\int_{\D_r}\abs{f'_a(z)}^2dA(z)\, dr + \omega(a)^2\int_R^1 \frac{1}{(1-r)^{1-s}}\int_{\D_r}\abs{f'_a(z)}^2dA(z)\, dr \\ 
			&= I + II.
		\end{aligned}
	\end{multline*}
	Since $f_a$ is univalent, the inner integral is the normalized area of the image $f_a(\D_r)$, then
	$$
	\int_{\D_r}\abs{f'_a(z)}^2dA(z) \leq \sup_{\abs{z}\leq r} \abs{f_a(z)}^2,
	$$
	then by Lemma~\ref{lemma: log qp 1}, if $\e>0$ there exists $R_0<1$ such that
	$$
	II\leq \omega(a)^2\int_{R_0}^1\sup_{\abs{z}\leq r}\abs{f_a(z)}^2dr <\e,\quad \forall a\in\D.
	$$
	For the remaining term, by following the proof of \cite[Proposition 3.2]{ChalDask}
	$$
	I \leq e^{2C_\omega \delta(0,R_0)}\int_0^{R_0}\frac{1}{(1-r)^{3-s}}\int_{\D_r} \abs{f'(\phi_a(z))}^2(1-\abs{\phi_a(z)}^2)\omega(\phi_a(z))^2 dA(z)\, dr,
	$$
	where $\delta$ denotes the hyperbolic distance on the disk. 
	Now, by hypothesis we can find $R_1<1$ such that if $\abs{w}>R_1$, then
	$$
	\omega(w)^2(1-\abs{w}^2)^2\abs{f'(w)}^2\leq e^{-2C_\omega \delta(0,R_0)}(1-R_0)^{3-s}\e.
	$$
	Finally, there exists some $\delta>0$ such that $\abs{\phi_a(z)}>R_1$, if $\abs{z}\leq R_0$ and $\abs{a}>1-\delta$. Then
	$$
	I\leq \e(1-R_0)^{3-s}\int_0^{R_0}\frac{1}{(1-r)^{3-s}}dr \leq \e.
	$$
	Therefore, for each $\e>0$, there exists $\delta>0$ such that for $\abs{a}>1-\delta$
	$$
	\omega(a)^2\int_{\D}\abs{f'(z)}^2(1-\abs{\phi_a(z)}^2)^sdA(z)\leq 2\e.
	$$
\end{proof}
\begin{corollary}\label{coro: Blog univ equiv Flog}
	Let $p, s $ in the admissible range. If $f\in\U$, then
	$$
	f\in F_{\log,0}(p,p-2,s-(p-2))\Leftrightarrow f\in\B_{\log,0}.
	$$
\end{corollary}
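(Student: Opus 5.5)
The forward implication is the easy one. By Lemma~\ref{lemma: Mad log cont Qp} we have $F_{\log,0}(p,p-2,s-(p-2))\subset \B_{\log,0}$ for every analytic $f$, so univalence is not needed there. The real content is the converse: for $f\in\U\cap\B_{\log,0}$ we must show $f\in F_{\log,0}(p,p-2,s-(p-2))$. Lemma~\ref{lemma: Mad log cont Qp} also gives some $0<q<\infty$ with $Q_{q,\log,0}\subset F_{\log,0}(p,p-2,s-(p-2))$. So it suffices to prove that $f\in Q_{q,\log,0}$ for every $q>0$, or at least for all small $q$. This is the logarithmic analogue of the Aulaskari–Stegenga–Xiao type result used in Lemma~\ref{lemma: univalent bloch equiv maD}.

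First reduce to small $q$. Since $(1-\abs{\phi_a(z)}^2)^{q}$ decreases as $q$ increases, $Q_{q_1,\log,0}\subset Q_{q_2,\log,0}$ whenever $q_1\le q_2$. Hence it is enough to treat $0<q<1$, with $q$ as small as we like. If the $q$ furnished by Lemma~\ref{lemma: Mad log cont Qp} is at least $1$, any smaller $q$ still works by this monotonicity.

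Next we apply Proposition~\ref{propo: log qp 2} with the weight
$$\omega(z)=\log\frac{e}{1-\abs{z}^2}.$$
This weight is $C^1$, strictly positive and at least $1$. Its gradient satisfies $\abs{\nabla\omega(z)}\asymp \abs{z}/(1-\abs{z}^2)$, so $(1-\abs{z}^2)\abs{\nabla\omega(z)}\le 2\abs{z}\le 2\le 2\omega(z)$. Thus the regularity condition \eqref{eq: weight def} holds, but only with $C_\omega=2$, which is not smaller than $q$. This is the main obstacle, since Lemma~\ref{lemma: log qp 1} needs $C_\omega<q$.

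To get around it, I would use instead $\omega_\e(z)=\big(\log\frac{e}{1-\abs{z}^2}\big)^{\e}$ for small $\e>0$. We have
$$(1-\abs{z}^2)\abs{\nabla \omega_\e}\le 2\e\,\omega_\e\big/\log\tfrac{e}{1-\abs{z}^2}\le 2\e\,\omega_\e,$$
so $C_{\omega_\e}=2\e<q$ once $\e$ is small. Alternatively, one can use $\omega(z)=\log\frac{e^{K}}{1-\abs{z}^2}$ with $K$ large, which gives $(1-\abs{z}^2)\abs{\nabla\omega}\le (2/K)\,\omega$. This keeps exactly the logarithmic size we need, so the second choice is preferable. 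Since $\omega\asymp\log\frac{e}{1-\abs{z}^2}$, the hypothesis $f\in\B_{\log,0}$ gives $\abs{f'(z)}(1-\abs{z}^2)\omega(z)\to0$. Proposition~\ref{propo: log qp 2} then yields
$$\omega(a)^2\int_\D\abs{f'(z)}^2(1-\abs{\phi_a(z)}^2)^q\,dA(z)\to0 \quad\text{as } \abs{a}\to1.$$
Because $\omega(a)\gtrsim\log\frac{1}{1-\abs{a}^2}$ for $\abs{a}$ near $1$, this says precisely that $f\in F_{\log,0}(2,0,q)=Q_{q,\log,0}$. Combining with the inclusion from Lemma~\ref{lemma: Mad log cont Qp} completes the converse.

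The step to check carefully is the verification of \eqref{eq: weight def} with a constant below $q$. The shifted-logarithm weight handles it, since $C_\omega$ can be made arbitrarily small while preserving comparability with the logarithm.
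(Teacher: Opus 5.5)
Your proposal is correct and follows the paper's proof. You reduce via Lemma~\ref{lemma: Mad log cont Qp} to showing $f\in Q_{q,\log,0}$ for $0<q<1$, then apply Proposition~\ref{propo: log qp 2} with the shifted logarithmic weight $\log\frac{K}{1-\abs{z}^2}$ (your $e^K$ version), whose regularity constant tends to $0$ as $K\to\infty$. Your explicit monotonicity reduction in $q$ is a useful detail the paper leaves implicit. You should drop the $\omega_\e$ detour, since it only yields decay of order $(\log\frac{1}{1-\abs{a}^2})^{2\e}$ rather than the needed square of the logarithm.
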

\begin{proof}
	By Lemma~\ref{lemma: Mad log cont Qp} it is enough to check that, if $f\in\B_{\log,0}$ then $f\in Q_{s,\log,0}$ with $0<s<1$. 
	
	Let $K>0$ and consider the weight $\omega(z)=\log\frac{K}{1-\abs{z}^2}$, then if $z=re^{i\theta}$
	$$
	\frac{(1-\abs{z}^2)\abs{\nabla \omega(z)}}{\omega(z)} = \frac{2r}{\log K + \log\frac{1}{1-r}},
	$$
	and this quantity converges to 0 uniformly on $(0,1)$ when $K$ goes to infinity, so for every $0<s<1$, there exists a $K_s$ such that $\omega$ verifies the conditions of Lemma~\ref{lemma: log qp 1}, then by Proposition~\ref{propo: log qp 2}, $f\in Q_{p,\log,0}$.
\end{proof}
\section{Integration operator and related results}
In view of \eqref{eq:Blasco} to understand the behaviour of the maximal subspace of strong continuity we need to understand how the integral operator $T_g $ acts on $M_\alpha(D^p_s) $. This section is devoted to this objective.

\subsection{Boundedness and compactness}
For the case $\a=0$ the integral operator is widely understood, the following result follows from \cite[Theorem 5.1,5.2]{PauZhao}, \cite[Theorem 1.4]{Chen} and Gantmacher's theorem \cite[Theorem 5.23]{PositiveOperators}
\begin{lettertheorem}\label{thlet: tg log caract}
	Let $1<p<\infty$, $p-2<s<\infty$ and $g\in\H(\D)$. $T_g$ is bounded on $\MaD{0}{p}{s}$ if and only if $g\in F_{\log}(p,p-2,s-(p-2))$. Furthermore, the following are equivalent:
	\begin{enumerate}
		\item[(i)] $T_g$ is compact on $\MaD{0}{p}{s}$;
		\item[(ii)] $g\in F_{\log,0}(p,p-2,s-(p-2))$;
		\item[(iii)] $T_g$ is weakly compact on $\MaD{0}{p}{s}$;
		\item[(iv)] $T_g(\MaD{0}{p}{s})\subset \maD{0}{p}{s}$.
	\end{enumerate}  
\end{lettertheorem}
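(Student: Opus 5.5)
We obtain the statement by identifying $\MaD{0}{p}{s}$ with an $F(p,q,s)$ space and then transferring the known Volterra-operator results for that family. By Proposition~\ref{prop: classif Mad}(ii) with $\a=0$ and $s>p-2$,
\[
\MaD{0}{p}{s}=F(p,p-2,s-(p-2)),
\qquad
\maD{0}{p}{s}=F_0(p,p-2,s-(p-2)),
\]
with equivalent norms. Both identifications come from part (iii) of that proposition. The parameters are $q=p-2>-2$ and $s'=s-(p-2)>0$, and $q+s'=s>-1$. So we are in the nontrivial range of Zhao's spaces, where $F_0$ is the closure of the polynomials (Corollary~\ref{coro: density polynomials}).

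\textbf{Step 1: boundedness.} We quote the characterization of Pau and Zhao \cite[Theorem 5.1]{PauZhao}: $T_g$ is bounded on $F(p,q,s')$ if and only if $g$ satisfies the logarithmic Carleson-type condition
\[
\sup_{a}\Big(\log\tfrac{e}{1-|a|^2}\Big)^p\int_\D|g'|^p(1-|z|^2)^{q}\,(1-|\phi_a|^2)^{s'}\,dA<\infty,
\]
which is exactly $g\in F_{\log}(p,p-2,s-(p-2))$. The only care needed is to check that the parameter ranges in \cite{PauZhao} cover all $1<p<\infty$, $s>p-2$. Where they do not (for instance $s'>1$, where $F=\B$), we use Chen's theorem \cite[Theorem 1.4]{Chen} instead. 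For $s'>1$ the space is the Bloch space and $F_{\log}$ reduces to $\B_{\log}$, the classical condition.

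\textbf{Step 2: compactness.} The equivalence (i)$\Leftrightarrow$(ii) is \cite[Theorem 5.2]{PauZhao} (or Chen) in the same way: compactness is equivalent to the little-o version $g\in F_{\log,0}$. Next, (i)$\Rightarrow$(iii) is trivial. For (iii)$\Leftrightarrow$(iv) we apply Gantmacher's theorem \cite[Theorem 5.23]{PositiveOperators} to $T_g$, in the following form.
\begin{itemize}
\item[(a)] $T_g$ maps $\maD{0}{p}{s}$ into itself. If $g$ gives a bounded operator, polynomials are mapped to functions in the little space: $T_g(z^n)$ has derivative $z^ng'$, and one checks that this gives a little-$o$ quantity.
\item[(b)] $\MaD{0}{p}{s}$ is isometric to the bidual of $\maD{0}{p}{s}$, and the bidual operator $T_g^{**}$ is $T_g$ on $\MaD{0}{p}{s}$.
\end{itemize}
Given (a) and (b), weak compactness of $T_g|_{\maD{0}{p}{s}}$ is equivalent to $T_g^{**}(\MaD{0}{p}{s})\subset\maD{0}{p}{s}$, i.e.\ (iv). We also need to pass between weak compactness on the big space and on the small space. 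This follows because $T_g$ on the big space is the bidual of its restriction, and an operator is weakly compact iff its bidual is.

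\textbf{Step 3: closing the loop.} It remains to show (iv)$\Rightarrow$(ii). Apply $T_g$ to the test functions $l_w$ of Lemma~\ref{lemma: test funct a>0}(i), which are uniformly bounded in $\MaD{0}{p}{s}$. Then $(T_g l_w)'=g'\,l_w$, and on the Carleson box $S(w)$ we have $|l_w|\gtrsim\log\frac{e}{1-|w|^2}$. Using the box form of the norm in \eqref{eq: equiv norm maD}, membership of $T_g l_w$ in the little space gives smallness of $(\log)^p$ times the box integral of $|g'|^p$. To get uniform smallness in $w$ we combine this with weak compactness, via a diagonal/Baire argument on a sequence $w_n\to\partial\D$.

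The main obstacle is the duality identification in Step 2(b) for the full parameter range, together with this last uniformity step. For both we would rely on \cite{PauZhao} and \cite{Chen}, where these facts are established.
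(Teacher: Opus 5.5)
Your overall route is the paper's. The paper gives no argument for this statement beyond identifying $M_0(D^p_s)=F(p,p-2,s-(p-2))$ and $m_0(D^p_s)=F_0(p,p-2,s-(p-2))$ through Proposition~\ref{prop: classif Mad}, and then invoking \cite[Theorems 5.1, 5.2]{PauZhao}, \cite[Theorem 1.4]{Chen} and Gantmacher's theorem. That is what your Steps 1--2 do. Your observation in 2(a) is also correct: $g\in F_{\log}$ forces $g\in F_0$, and hence $T_g(m_0(D^p_s))\subset m_0(D^p_s)$.

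The one implication you try to prove yourself, (iv)$\Rightarrow$(ii) in Step 3, does not work as written. For fixed $w\in\D$ the function $l_w$ is bounded. So $T_g l_w\in m_0(D^p_s)$ only says that the box quantities of $|g' l_w|^p$ vanish as the boxes shrink, which is just $g\in F_0$. The lower bound $|l_w|\gtrsim\log\frac{e}{1-|w|^2}$ holds only on the single fixed box $S(w)$. Knowing $T_g l_w\in m_0(D^p_s)$ for each $w$ separately therefore says nothing about the quantity at $a=w$, and no diagonal or Baire argument extracts uniformity from that.

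The mechanism that works is the one the paper uses for $\alpha>0$ in the proof of Proposition~\ref{propo: Tg compact mad}.
\begin{itemize}
\item For $a_n\to\zeta\in\T$, write $l_{a_n}=l_\zeta+h_n$.
\item $l_\zeta\in M_0(D^p_s)$ (by Fatou from the uniform bound in Lemma~\ref{lemma: test funct a>0}(i)). By (iv), $T_g l_\zeta\in m_0(D^p_s)$, so its box quantity over $S(a_n)$ tends to $0$.
\item $h_n$ is bounded in $M_0(D^p_s)$ and $\|h_n\|_{D^p_s}\to 0$, because $l_\zeta\in D^p_s$ when $s>p-2$.
\item Weak compactness gives the estimate $\|T_g h\|_{M_0(D^p_s)}\le N\|h\|_{D^p_s}+\varepsilon\|h\|_{M_0(D^p_s)}$ from \cite[Corollary 3.3]{PerfektCompact}. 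Hence $\|T_g h_n\|_{M_0(D^p_s)}\to 0$.
\item Combining this with $|l_{a_n}|\gtrsim\log\frac{e}{1-|a_n|^2}$ on $S(a_n)$ yields the logarithmic vanishing condition along every such sequence, which is (ii).
\end{itemize}
This uses (iii) and (iv) together, which your Step 2 shows are equivalent. So either supply this argument, or take the implication from the cited literature as the paper does.
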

The case $\a>0$ is much more involved, in fact there is no complete characterization of the boundedness or compactness for general analytic symbols for the whole admissible range, only for some ranges of parameters the symbols are characterized in \cite[Theorem 5.2]{PauZhao}. 
However, for our purpose, we just need this kind of result for univalent symbols, under this assumption we are able to characterize the univalent symbols $g$ such that $T_g$ is bounded/compact on $\MaD{\a}{p}{s}$ if $\a>0$ in the admissible range, first we need some previous results.  \\
The following lemma is an immediate consequence of \cite[Theorem 4.3]{PauZhao}. 
\begin{lemma}\label{lemma: bounded Dps implies Mad}
	Let $p, s ,\a>0$ in the admissible range, and $g\in\H(\D)$. If $T_g$ is bounded on $D^p_s$, then it is also bounded on $\MaD{\a}{p}{s}$. Moreover,
	$$
	\nm{T_g}_{\MaD{\a}{p}{s}}\lesssim \nm{T_g}_{D^p_s}.
	$$
\end{lemma}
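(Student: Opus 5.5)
The plan is to work with the Carleson-type description \eqref{eq: equiv norm maD} of $\MaD{\a}{p}{s}$ from Proposition~\ref{prop: classif Mad}(ii). Set $\sigma=s-(p(\a+1)-2)>0$ and fix a large $\beta>0$. Since $T_gf(0)=0$ and $(T_gf)'=fg'$, it suffices to show that
$$
\sup_{a\in\D}\int_\D |f(z)|^p|g'(z)|^p\frac{(1-|z|^2)^s(1-|a|^2)^\beta}{|1-\overline{a}z|^{\sigma+\beta}}\,dA(z)\lesssim \nm{T_g}_{D^p_s}^p\nm{f}_{\MaD{\a}{p}{s}}^p .
$$
Boundedness of $T_g$ on $D^p_s$ is the same as the inequality $\int_\D |F|^p|g'|^p(1-|z|^2)^s\,dA\le \nm{T_g}^p_{D^p_s}\nm{F}^p_{D^p_s}$ for every $F\in D^p_s$. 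So for each $a$ I will write $f=(f-f(a))+f(a)$ and feed suitable test functions $F$ built from $a$ into this single inequality.

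\textbf{The constant part $f(a)$.} Take $F=k_a(z)=(1-|a|^2)^{\beta/p}(1-\overline{a}z)^{-(\sigma+\beta)/p}$. For $\beta$ large, a standard integral estimate gives $\nm{k_a}_{D^p_s}^p\lesssim (1-|a|^2)^{-(\sigma+\beta)-p+s+2-\beta+\beta}$, i.e.\ $\lesssim(1-|a|^2)^{s-p+2-\sigma}=(1-|a|^2)^{p\a}$. Combining this with the growth estimate $|f(a)|\lesssim(1-|a|^2)^{-\a}\nm{f}_{\MaD{\a}{p}{s}}$ (\cite[Proposition 2.5]{zhaosurvey}), the $f(a)$-term is bounded by
$$
|f(a)|^p\nm{T_g}^p\nm{k_a}_{D^p_s}^p\lesssim \nm{T_g}^p\nm{f}^p_{\MaD{\a}{p}{s}} .
$$
This is the only place where $\a>0$ matters: for $\a=0$ the growth of $f(a)$ is logarithmic, which is exactly why the case $\a=0$ requires $F_{\log}$ instead.

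\textbf{The oscillation part $f-f(a)$.} Take $F=F_a=(f-f(a))\,k_a$. The integral we must control is then exactly $\int|F_a|^p|g'|^p(1-|z|^2)^s\,dA$, so it remains to prove $\nm{F_a}_{D^p_s}\lesssim\nm{f}_{\MaD{\a}{p}{s}}$ uniformly in $a$. Since $F_a(0)$ is harmless, we look at
$$
F_a'=f'k_a+(f-f(a))k_a' .
$$
The first term contributes exactly the second equivalent norm in \eqref{eq: equiv norm maD} with parameter $\beta$, so it is bounded by $\nm{f}^p_{\MaD{\a}{p}{s}}$. The second term contributes
$$
\int|f-f(a)|^p\frac{(1-|a|^2)^{\beta+p}}{|1-\overline a z|^{\sigma+\beta+p}}(1-|z|^2)^s\,dA,
$$
using $|k_a'|\lesssim (1-|a|^2)^{\beta/p}|1-\overline a z|^{-(\sigma+\beta)/p-1}$ and harmlessly inserting a factor $(1-|a|^2)^p\le 2^p|1-\overline a z|^p$ in the direction we need.

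\textbf{Main obstacle.} The step I expect to be hardest is the Hardy-type inequality showing that this last integral is dominated by the $f'$-integral. My plan is to change variables $z=\phi_a(w)$, which turns the kernel into a power of $(1-|w|^2)$ times a power of $|1-\overline a w|$. With $h=f\circ\phi_a-f(a)$, so that $h(0)=0$, the integral becomes a weighted $L^p$ norm of $h$. I will then apply the classical estimate $\int|h|^p(1-|w|^2)^{t}\,dA\lesssim\int|h'|^p(1-|w|^2)^{t+p}\,dA$ for $h(0)=0$, with $\beta$ large enough that the residual factor $|1-\overline a w|$ has a favorable exponent, or is handled by the weighted version of the same inequality. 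Undoing the change of variables gives back the $f'$-integral, i.e.\ the third equivalent norm in \eqref{eq: equiv norm maD}.

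Taking the supremum over $a$ then gives $\nm{T_gf}_{\MaD{\a}{p}{s}}\lesssim\nm{T_g}_{D^p_s}\nm{f}_{\MaD{\a}{p}{s}}$. This argument is essentially the content of \cite[Theorem 4.3]{PauZhao}.
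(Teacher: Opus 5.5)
Your strategy is sound, and it differs from the paper, which gives no argument of its own: it simply invokes \cite[Theorem 4.3]{PauZhao}, a Carleson-measure type result for the spaces $F(p,q,s)$, applied to $d\mu=\abs{g'}^p(1-\abs{z}^2)^s\,dA$.

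However, the step you single out as the heart of the proof is, as written, applied to the wrong integral. Since $\abs{k_a'}^p\asymp\abs{a}^p(1-\abs{a}^2)^{\beta}\abs{1-\overline{a}z}^{-(\sigma+\beta+p)}$, the contribution of $(f-f(a))k_a'$ to $\nm{F_a}^p_{D^p_s}$ is
\[
\int_\D\abs{f(z)-f(a)}^p\,\frac{(1-\abs{a}^2)^{\beta}(1-\abs{z}^2)^s}{\abs{1-\overline{a}z}^{\sigma+\beta+p}}\,dA(z),
\]
with $(1-\abs{a}^2)^{\beta}$, not $(1-\abs{a}^2)^{\beta+p}$. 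Multiplying the integrand by $(1-\abs{a}^2)^p\le 1$ only makes it smaller. So the integral you display is a \emph{lower} bound for the quantity you must control, and estimating it says nothing about $\nm{F_a}_{D^p_s}$. The inequality $(1-\abs{a}^2)\le 2\abs{1-\overline{a}z}$ cannot be used to put that factor there.

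The repair is easy and in fact removes the obstacle. Put $z=\phi_a(w)$ and $h=f\circ\phi_a-f(a)$. Using $1-\overline{a}\phi_a(w)=(1-\abs{a}^2)/(1-\overline{a}w)$, the correct integral becomes $(1-\abs{a}^2)^{p\a}\int_\D\abs{h}^p(1-\abs{w}^2)^s\abs{1-\overline{a}w}^{E}\,dA$ with $E=\beta-s-p\a-2$. The $\beta$-expression in \eqref{eq: equiv norm maD} becomes $(1-\abs{a}^2)^{p\a}\int_\D\abs{h'}^p(1-\abs{w}^2)^s\abs{1-\overline{a}w}^{E+p}\,dA$.

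Taking $\beta$ large is the wrong instinct here. It makes $E>0$, and since $\abs{1-\overline{a}w}^{E}$ is not bounded below uniformly in $a$, you would then need a weighted Hardy inequality uniform in $a$. Instead choose $\beta=s+2+p\a$, so that $E=0$. Then the classical estimate $\int_\D\abs{h}^p(1-\abs{w}^2)^s\,dA\lesssim\int_\D\abs{h'}^p(1-\abs{w}^2)^{s+p}\,dA$ (valid for $h(0)=0$, $s>-1$), followed by $(1-\abs{w}^2)^p\le 2^p\abs{1-\overline{a}w}^p$, bounds the oscillation term by $\nm{f}^p_{\MaD{\a}{p}{s}}$. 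That last inequality is where a harmless factor legitimately enters. This $\beta$ also satisfies $\beta>p\a$, which you need for $\nm{k_a}^p_{D^p_s}\lesssim(1-\abs{a}^2)^{p\a}$ and for $\abs{F_a(0)}\lesssim\nm{f}_{\MaD{\a}{p}{s}}$.

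With this correction your argument is a complete, self-contained proof of the needed Carleson-type implication. Compared with the paper's one-line citation, it shows exactly where $\a>0$ is used: only in $\abs{f(a)}^p\nm{k_a}^p_{D^p_s}\lesssim\nm{f}^p_{\MaD{\a}{p}{s}}$. This is precisely what fails at $\a=0$, where the logarithmic condition $g\in F_{\log}(p,p-2,s-(p-2))$ takes over.
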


\begin{proposition}\label{propo: Tg bounded Mad suff}
	Let $p, s, \alpha>0 $ in the admissible range, and $g\in\H(\D)$. If there exists $\beta>0$ such that $g\in\MaD{0}{p}{s-\beta}$, then $T_g$ is bounded on $\MaD{\a}{p}{s}$.
	Moreover,
	$$
	\nm{T_g}_{\MaD{\a}{p}{s}}\lesssim \nm{g-g(0)}_{\MaD{0}{p}{s-\beta}}.
	$$
\end{proposition}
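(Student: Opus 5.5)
We estimate the Carleson-box form of the norm from Proposition~\ref{prop: classif Mad}(ii) directly. Let $f\in\MaD{\a}{p}{s}$ and put $\sigma = s-(p(\a+1)-2)>0$. Since $(T_gf)(0)=0$ and $(T_gf)'=fg'$, it suffices to show that for every $a\in\D$
$$
\frac{1}{(1-\abs{a}^2)^{\sigma}}\int_{S(a)}\abs{f(z)}^p\abs{g'(z)}^p(1-\abs{z}^2)^s\,dA(z)\lesssim \nm{f}^p_{\MaD{\a}{p}{s}}\nm{g-g(0)}^p_{\MaD{0}{p}{s-\beta}}.
$$
We may assume without loss of generality that $\beta$ is small, in particular $\beta<p\a$. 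Indeed, $\MaD{0}{p}{s-\beta}$ grows with $s-\beta$: the box quantity $(1-\abs{a}^2)^{-(s-\beta-p+2)}\int_{S(a)}\abs{g'}^p(1-\abs{z}^2)^{s-\beta}$ only decreases in size when $\beta$ is decreased, up to constants, because $1-\abs{z}\lesssim 1-\abs{a}$ on $S(a)$.

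\textbf{Step 1 (pointwise growth of $f$).} Every $f\in\MaD{\a}{p}{s}$ satisfies the growth estimate used in the proof of Theorem~\ref{thm:1.1} (from \cite[Proposition 2.5]{zhaosurvey}):
$$
\abs{f(z)}\lesssim \frac{\nm{f}_{\MaD{\a}{p}{s}}}{(1-\abs{z}^2)^{\a}}, \qquad \a>0 .
$$
This step is routine and uses only results stated earlier.

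\textbf{Step 2 (reduction to a weighted Carleson estimate for $g$).} Inserting the growth bound gives
$$
\frac{1}{(1-\abs{a}^2)^{\sigma}}\int_{S(a)}\abs{f}^p\abs{g'}^p(1-\abs{z}^2)^s\,dA
\lesssim \frac{\nm{f}^p}{(1-\abs{a}^2)^{\sigma}}\int_{S(a)}\abs{g'(z)}^p(1-\abs{z}^2)^{s-p\a}\,dA(z).
$$
We write $(1-\abs{z}^2)^{s-p\a}=(1-\abs{z}^2)^{s-\beta}(1-\abs{z}^2)^{\beta-p\a}$. The factor $(1-\abs{z}^2)^{\beta-p\a}$ blows up near the boundary, so it cannot simply be bounded by a power of $1-\abs{a}^2$. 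This is the main obstacle.

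\textbf{Step 3 (dyadic decomposition of the box).} To overcome it we decompose $S(a)$ into the dyadic Carleson boxes $S(b)$ with $1-\abs{b}\asymp 2^{-k}(1-\abs{a})$. There are about $2^k$ such boxes at level $k$, and we use their top halves. On each piece $(1-\abs{z}^2)^{\beta-p\a}\asymp (2^{-k}(1-\abs{a}))^{\beta-p\a}$. The hypothesis $g\in\MaD{0}{p}{s-\beta}$, through the box characterization of Proposition~\ref{prop: classif Mad}(ii) with exponent $s-\beta-(p-2)$, gives
$$
\int_{S(b)}\abs{g'}^p(1-\abs{z}^2)^{s-\beta}\,dA\lesssim \nm{g-g(0)}^p (1-\abs{b})^{s-\beta-p+2}.
$$

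\textbf{Step 4 (summation).} Summing over the boxes at level $k$ and then over $k$, and dividing by $(1-\abs{a})^{\sigma}$, produces
$$
\sum_k 2^{k}\,2^{-k(s-\beta-p+2+\beta-p\a)}=\sum_k 2^{-k(\sigma-1)}.
$$
This series converges only when $\sigma>1$, which does not hold throughout the admissible range.

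If this happens we should not treat the boxes disjointly. Instead we apply a Carleson-measure argument: the measure $d\mu=\abs{g'}^p(1-\abs{z}^2)^{s-\beta}dA$ is a $(s-\beta-p+2)$-Carleson measure. Integrating the radial weight $(1-\abs{z})^{\beta-p\a}$ against $\mu$ over $S(a)$ by the layer-cake formula, using the level sets $\{1-\abs{z}<t\}\cap S(a)$, the measure of each level set is controlled by covering it with about $(1-\abs{a})/t$ boxes of size $t$. This gives
$$
\int_0^{1-\abs{a}} t^{\beta-p\a-1}\,\frac{1-\abs{a}}{t}\,t^{s-\beta-p+2}\,dt=(1-\abs{a})\int_0^{1-\abs{a}} t^{\sigma-2}\,dt,
$$
which again requires $\sigma>1$.

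If $\sigma\le 1$ the argument must use the full Möbius-invariant norm of $f$ rather than just its growth. In that case we would write $f=(f-f(a))+f(a)$ on $S(a)$. The constant part $f(a)$, with $\abs{f(a)}\lesssim(1-\abs{a})^{-\a}$, is handled by the $\MaD{0}{p}{s-\beta}$ box condition, and the small gain $\beta$ absorbs the mismatch. The oscillation $f-f(a)$ is estimated via the sub-mean-value inequality together with the membership of $f\circ\phi_a-f(a)$ in $D^p_s$. This last bookkeeping is where I expect the real difficulty to lie.
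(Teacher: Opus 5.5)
\textbf{There is a genuine gap.} Your Steps 1--4 are correct, but they only prove the estimate when $\sigma=s-(p(\a+1)-2)>1$, i.e.\ $s>p\a+p-1$. That is exactly the range where $\MaD{\a}{p}{s}$ is the growth space $\mathcal{A}^{-\a}$, and there the exponent $\beta$ cancels and plays no role. For $0<\sigma\le 1$ nothing is proved. This range includes the Morrey spaces $H^{2,\lambda}=\MaD{\frac{1-\lambda}{2}}{2}{1}$, where $\sigma=\lambda<1$.

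The obstruction is not bookkeeping: any argument that replaces $\abs{f}$ by its growth majorant must fail there. For example, take $p=2$, $s=3/2$, $\a=3/10$, so that $s-(p-2)>1\ge\sigma$. For small $\beta$ one has $\MaD{0}{p}{s-\beta}=\B$. For a lacunary Bloch function $g$ one has $M_p(r,g')\asymp(1-r)^{-1}$, so $\int_{S(a)}\abs{g'}^p(1-\abs{z}^2)^{s-p\a}\,dA=\infty$ for some $a$.

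Your fallback handles the constant part $f(a)$ correctly, and that part does not even need $\beta$. But the oscillation $f-f(a)$ is the whole problem. A sub-mean-value (pointwise) bound for $h=f\circ\phi_a-f(a)$ has the same defect: it only gives $\abs{h(w)}\lesssim\nm{h}_{D^p_s}(1-\abs{w})^{-(s-(p-2))/p}$. Inserting this would require $g\circ\phi_a$ to lie in the Besov space $B^p$, which the hypothesis does not give.

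\textbf{The missing ingredient} is an integral (Carleson-embedding) estimate on the base space, and this is exactly where $\beta>0$ is used:
\[
\int_\D\abs{h(w)}^p\abs{G'(w)}^p(1-\abs{w}^2)^s\,dA(w)\lesssim \nm{G-G(0)}^p_{\MaD{0}{p}{s-\beta}}\nm{h}^p_{D^p_s},\qquad h(0)=0,
\]
i.e.\ boundedness of $T_G$ on $D^p_s$. The paper proves it as follows:
\begin{itemize}
\item Start from $\abs{h(z)}\lesssim\int_\D\abs{h'(w)}(1-\abs{w}^2)^{\gamma}\abs{1-\overline{w}z}^{-1-\gamma}\,dA(w)$ with $\gamma>s/p$.
\item Apply H\"older's inequality, splitting off $\abs{1-\overline{w}z}^{-\e/p}$ with $\e=\beta+2p-2$. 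This produces exactly the factor $(1-\abs{z}^2)^{-\beta}$, which the hypothesis absorbs.
\item Use Fubini and the dyadic bound $\int_\D\abs{G'(z)}^p(1-\abs{z}^2)^{s-\beta}\abs{1-\overline{w}z}^{-(p+\gamma p-\e)}\,dA(z)\lesssim(1-\abs{w}^2)^{s-\gamma p}$, which comes from the box condition on $G$.
\item Transfer boundedness from $D^p_s$ to $\MaD{\a}{p}{s}$ via Lemma~\ref{lemma: bounded Dps implies Mad}.
\end{itemize}

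Within your own framework this closes the gap directly. The $\MaD{0}{p}{s-\beta}$ seminorm is M\"obius invariant, so apply the embedding to $G=g\circ\phi_a$ and $h=f\circ\phi_a-f(a)$, for which $\nm{h}_{D^p_s}\le(1-\abs{a}^2)^{-\a}\nm{f}_{\MaD{\a}{p}{s}}$. The substitution $z=\phi_a(w)$ turns the left-hand side into $\int_\D\abs{f(z)-f(a)}^p\abs{g'(z)}^p(1-\abs{z}^2)^s\abs{\phi_a'(z)}^{s-(p-2)}\,dA(z)$. Since $\abs{\phi_a'}\asymp(1-\abs{a}^2)^{-1}$ on $S(a)$, you get
\[
\int_{S(a)}\abs{f-f(a)}^p\abs{g'}^p(1-\abs{z}^2)^s\,dA\lesssim(1-\abs{a}^2)^{\sigma}\nm{f}^p_{\MaD{\a}{p}{s}}\nm{g-g(0)}^p_{\MaD{0}{p}{s-\beta}},
\]
as required.
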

\begin{proof}
	Assume that there exists $\beta>0$ such that $g\in\MaD{0}{p}{s-\beta}$, by  Lemma~\ref{lemma: bounded Dps implies Mad} we just need to check that $T_g$ is bounded on $D^p_s$.
	Let $\gamma>\max\left (0,\frac{s}{p}\right )$, then by \cite[Proposition 4.27]{zhu} 
	$$
	\abs{f(z)}\lesssim \int_\D\frac{\abs{f'(w)}(1-\abs{w}^2)^\gamma}{\abs{1-\overline{w}z}^{\gamma + 1}}dA(w),\quad f\in\H(\D)
	$$
	take now $\e= \beta + 2p-2$, by Hölder's inequality
	\begin{equation*}
		\begin{split}
			\abs{f(z)}^p &\leq \int_\D \frac{\abs{f'(w)}^p(1-\abs{w}^2)^{\gamma p}}{\abs{1-\overline{w}z}^{p+\gamma p-\e}}dA(w)\left ( \int_\D \abs{1-\overline{w}z}^{-\frac{\e}{p-1}} \right )^{p-1} \\
			&\asymp \frac{1}{(1-\abs{z}^2)^{\e-2p+2}}\int_\D \frac{\abs{f'(w)}^p(1-\abs{w}^2)^{\gamma p}}{\abs{1-\overline{w}z}^{p+\gamma p-\e}}dA(w),\quad z\in\D.
		\end{split}
	\end{equation*}
	Then Fubini's theorem, if $f\in D^p_s$
	\begin{equation}\label{eq: tg univ 1}
		\begin{split}
			\nm{T_gf}_{D^p_s}^p \leq \int_{\D} \abs{f'(w)}^p(1-\abs{w}^2)^{\gamma p} \int_\D \frac{\abs{g'(z)}^p(1-\abs{z}^2)^{s-\beta}}{\abs{1-\overline{w}z}^{p+\gamma p-\e}}dA(z)\, dA(w).
		\end{split}
	\end{equation}
	
	Now denote by $I(g,w)$ to the inner integral, fixed $w\in\D\setminus\{0\}$ and consider $N(w)=\min\set{n\in\N: 2^n(1-\abs{w}^2)\geq 1}$, for every $0\leq k < N(w)$ set $S_k(w)=S\left ((1-2^k(1-\abs{w}^2))e^{i\arg{w}} \right )$, $S_{-1}(w)=\emptyset$ and $S_{N(w)}=\D$. Then $\abs{1-\overline{w}z}\asymp 2^k(1-\abs{w}^w)$, if $z\in S_k(w)\setminus S_{k-1}(w)$, so
	\begin{equation*}
		\begin{split}
			I(g,w) &= \sum_{k=0}^{N(w)} \int_{S_k(w)\setminus S_{k-1}(w)} \frac{\abs{g'(z)}^p(1-\abs{z}^2)^{s-\beta}}{\abs{1-\overline{w}z}^{p+\gamma p-\e}}dA(z) \\
			&\lesssim (1-\abs{z}^2)^{\e-p-\gamma p} \sum_{k=0}^{N(w)} \frac{1}{2^{p+\gamma p -\e}}\int_{S_k(w)} \abs{g'(z)}^p(1-\abs{z}^2)^{s-\beta}dA(z) \\
			&\lesssim \nm{g-g(0)}_{\MaD{0}{p}{s-\beta}}^p(1-\abs{z}^2)^{\e-p-\gamma p + s -\beta -(p-2)} \sum_{k=0}^{N(w)} \frac{1}{2^{p+\gamma p - \e -s +\beta+p-2}} \\
			&= \nm{g-g(0)}_{\MaD{0}{p}{s-\beta}}^p(1-\abs{z}^2)^{s-\gamma p}\sum_{k=0}^{N(w)} \frac{1}{2^{\gamma p -s}}\lesssim\nm{g-g(0)}_{\MaD{0}{p}{s-\beta}}^p(1-\abs{z}^2)^{s-\gamma p}
		\end{split}
	\end{equation*}
	by joining this with \eqref{eq: tg univ 1}
	$$
	\nm{T_gf}_{D^p_s}^p \lesssim \nm{g-g(0)}_{\MaD{0}{p}{s-\beta}}^p\int_{\D} \abs{f'(z)}^p(1-\abs{z}^2)^sdA(z)\leq \nm{g-g(0)}_{\MaD{0}{p}{s-\beta}}^p\nm{f}_{D^p_s}^p.
	$$
\end{proof}
\begin{proposition}\label{propo: Tg bounded Mad univalent}
	Let $p, s, \alpha>0 $ in the admissible range, and $g\in\U$. The following holds:
	\begin{enumerate}
		\item[(i)] If $g\in\MaD{0}{p}{s}$ then there exists $\beta>0$ such that $g\in\MaD{0}{p}{s-\beta}$;
		\item[(ii)] $T_g$ is bounded on $\MaD{\a}{p}{s}$ if and only if $g\in \MaD{0}{p}{s}$.
	\end{enumerate}
	Moreover,
	$$ 
	\nm{g-g(0)}_{\MaD{0}{p}{s}}\lesssim \nm{T_g}_{\MaD{\a}{p}{s}}\lesssim \nm{g-g(0)}_{\MaD{0}{p}{s-\beta}}.
	$$
\end{proposition}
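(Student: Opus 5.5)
Part (i) rests on univalence. By Lemma~\ref{lemma: univalent bloch equiv maD}, $g\in\MaD{0}{p}{s}$ with $g\in\U$ is equivalent to $g\in\B$. So it suffices to find $\beta>0$ such that $(p,s-\beta)$ is still in the admissible range and then apply Lemma~\ref{lemma: univalent bloch equiv maD} again, with $s-\beta$ in place of $s$. Admissibility of $(p,s)$ with $\alpha>0$ gives $s>p-2$, and $s>0$ when $1<p<2$; both are open conditions. Hence we may take $\beta$ small with $s-\beta>p-2$, and $s-\beta\ge0$ if $p<2$. For $1<p<2$ and $s=0$ this fails, but then $\alpha<\frac{2-p}{p}$ is positive while the requirement $s\geq 0$ is on the boundary. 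In that case I would instead use the Hölder argument of Lemma~\ref{lemma: contains Qp}: it shows $Q_q\subset\MaD{0}{p}{0}$ via a weight $(1-|z|^2)^{\e}$. I would redo it with a slightly negative exponent $-\beta$, obtaining $\nm{f}_{D^p_{-\beta}}\lesssim\nm{f}_{D^2_{q'}}$ for small $q'>0$, after checking that the integrability exponent $\frac{2(\e+\beta)}{2-p}<1$. Combined with univalent $g\in\B\Rightarrow g\in Q_{q'}$ from \cite[Theorem 6.1]{Aluaskari1997}, this gives $g\in\MaD{0}{p}{-\beta}$ with a norm bound by $\nm{g}_\B$. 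In all cases we get $\nm{g-g(0)}_{\MaD{0}{p}{s-\beta}}\lesssim\nm{g}_\B\lesssim\nm{g-g(0)}_{\MaD{0}{p}{s}}$, where the last inequality is \cite[Corollary 3.6]{zhaosurvey}.

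For part (ii), sufficiency is immediate: combine (i) with Proposition~\ref{propo: Tg bounded Mad suff}, which also yields the right-hand norm inequality. For necessity I would test $T_g$ on the functions $f_{w,\alpha,\lambda}$ of Lemma~\ref{lemma: test funct a>0}(ii), whose norms are uniformly bounded. Using the box norm in \eqref{eq: equiv norm maD} with the box $S(w)$, I get
\[
\frac{1}{(1-|w|^2)^{s-(p(\a+1)-2)}}\int_{S(w)}|g'(z)|^p|f_{w,\alpha,\lambda}(z)|^p(1-|z|^2)^s\,dA(z)\lesssim\nm{T_g}^p_{\MaD{\a}{p}{s}}.
\]
On $S(w)$ we have $|1-\overline{w}z|\asymp 1-|w|^2$, so $|f_{w,\alpha,\lambda}|^p\asymp(1-|w|^2)^{-p\alpha}$. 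Substituting this, the powers combine to give
\[
\sup_w\frac{1}{(1-|w|^2)^{s-(p-2)}}\int_{S(w)}|g'|^p(1-|z|^2)^s\,dA\lesssim\nm{T_g}^p,
\]
which by Proposition~\ref{prop: classif Mad}(ii) with $\alpha=0$ is exactly $\nm{g-g(0)}_{\MaD{0}{p}{s}}^p$. This also gives the left-hand inequality. Univalence is not needed for this direction.

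I expect the main obstacle to be part (i) at the boundary case $p<2$, $s=0$. There one must check that the Hölder exponents really permit a negative weight, or else argue directly that univalent Bloch functions satisfy the $D^p_{-\beta}$ Carleson-box condition. The alternative route I would try is to estimate $\int_{S(a)}|g'|^p(1-|z|^2)^{-\beta}\,dA$ by Hölder against the area integral $\int_{S(a)}|g'|^2\,dA\lesssim(1-|a|)^2\nm{g}_\B^2$, which holds for univalent $g$ since it is the area of the image of a box.
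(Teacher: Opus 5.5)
Your proof is correct and is essentially the paper's argument. Part (i) shrinks $s$ and applies Lemma~\ref{lemma: univalent bloch equiv maD}, with the same H\"older/$Q_q$ embedding into $M_0(D^p_{-\beta})$ in the boundary case $1<p<2$, $s=0$. Part (ii) uses Proposition~\ref{propo: Tg bounded Mad suff} for sufficiency and, for necessity, tests on the kernels of Lemma~\ref{lemma: test funct a>0} over the box $S(w)$; your $\lambda>0$ kernels fit that lemma's hypothesis more literally than the paper's $\lambda=0$ choice.

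Drop the fallback idea in your last paragraph, because it fails. For univalent Bloch $g$ the box integral $\int_{S(a)}\abs{g'}^2\,dA$ need not even be finite: take $g(z)=\log(1-z)$ and a box whose arc contains $1$. The image of that box contains a half-strip of infinite area.
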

\begin{proof}
	To check (i) if we revisit the admissible range, except for the case when $1<p<2$ and $s=0$, you can always take $\beta>0$ such that $p$ and $s'=s-\beta$ verify the admissible range, so as a consequence of Lemma~\ref{lemma: univalent bloch equiv maD}
	$$
	g\in\MaD{0}{p}{s}\Leftrightarrow g\in \B \Leftrightarrow g\in\MaD{0}{p}{s-\beta}.
	$$ 
	For the case $1< p <2$ and $s=0$, if we take $\beta<\frac{2-p}{2}$ and $\beta<\beta'<\frac{2-p}{2}$, we can reproduce the proof of Lemma~\ref{lemma: contains Qp} to check that
	$$
	Q_{\frac{2(\beta'-\beta)}{p}}\subset \MaD{0}{p}{-\beta}\subset \B,
	$$
	so by the same argument as Lemma~\ref{lemma: univalent bloch equiv maD} we get that
	$$
	g\in\MaD{0}{p}{0}\Leftrightarrow g\in \B \Leftrightarrow g\in\MaD{0}{p}{-\beta}.
	$$
	To prove the equivalence in (ii), first the sufficiency follows from (i) and Proposition~\ref{propo: Tg bounded Mad suff}. For the converse, by testing on the functions 
	$$
	f_\zeta(z)=\frac{1}{(1-\overline{\zeta}z)^\alpha},\quad z,\zeta\in\D.
	$$
	then we know $\sup_{\zeta\in\D}\nm{f_\zeta}_{\MaD{\a}{p}{s}}<\infty$, so
	\begin{equation*}
		\begin{split}
			\nm{T_g}_{\MaD{\a}{p}{s}}^p &\gtrsim \sup_{a\in\D}\frac{1}{(1-\abs{a}^2)^{s-(p(\alpha+1)-2)}}\int_{S(a)} \frac{\abs{g'(z)}^p(1-\abs{z}^2)^s}{\abs{1-\overline{\zeta}z}^{p\alpha}}dA(z) \\
			&\geq \frac{1}{(1-\abs{\zeta}^2)^{s-(p(\alpha+1)-2)}}\int_{S(\zeta)} \frac{\abs{g'(z)}^p(1-\abs{z}^2)^{s}}{\abs{1-\overline{\zeta}z}^{p\alpha}}dA(z) \\
			&\asymp \frac{1}{(1-\abs{\zeta}^2)^{s-(p-2)}}\int_{S(\zeta)} \abs{g'(z)}^p(1-\abs{z}^2)^{s}dA(z),
		\end{split}
	\end{equation*}
	and the proof ends by taking supremum on $\zeta\in\D$.
\end{proof}
Now that we have characterized the univalent symbols such that $T_g$ is bounded, we deal with the compactness, with this aim we prove the following lemma.
\begin{lemma}\label{lemma: Tg compact poli}
	Let $p, s, \alpha>0 $ in the admissible range, then the following holds:
	\begin{enumerate}
		\item[(i)] The multiplication operator
		$$
		Mf(z)=zf(z),\quad z\in\D,
		$$
		is bounded;
		\item[(ii)] The classical Volterra operator
		$$
		T f(z)=\int_0^z f(\zeta)d\zeta, 
		$$
		is compact on $\MaD{\a}{p}{s}$;
		\item[(iii)] For every polynomial $P$, then $T_P$ is compact on $\MaD{\a}{p}{s}$.
	\end{enumerate} 
\end{lemma}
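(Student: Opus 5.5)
The plan is to prove the three items in order, using (i) and (ii) to get (iii). Throughout I work with the Carleson-box norm from \eqref{eq: equiv norm maD}, where only $f(0)$ and $f'$ enter.

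For (i), write $(Mf)'(z) = f(z) + zf'(z)$. Since $|z|<1$, the term $zf'$ is controlled by $f'$ in the box norm. It then suffices to show that
$$
\sup_{a\in\D}\frac{1}{(1-|a|^2)^{s-(p(\a+1)-2)}}\int_{S(a)}|f(z)|^p(1-|z|^2)^s\,dA(z)\lesssim \nm{f}^p_{\MaD{\a}{p}{s}}.
$$
Because $\a>0$, the growth estimate \cite[Proposition 2.5]{zhaosurvey} gives $|f(z)|\lesssim \nm{f}_{\MaD{\a}{p}{s}}(1-|z|^2)^{-\a}$. The left side is therefore bounded by
$$
\frac{\nm{f}^p}{(1-|a|^2)^{s-p\a-p+2}}\int_{S(a)}(1-|z|^2)^{s-p\a}\,dA(z)\asymp \nm{f}^p\,(1-|a|^2)^{s-p\a+2-s+p\a+p-2}=\nm{f}^p(1-|a|^2)^p.
$$
Here I use $s-p\a>-1$, which holds in the admissible range because $p\a<s-p+2$ and $p>1$. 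This gives (i).

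For (ii), $(Tf)' = f$, so the same computation shows that the box quantity of $Tf$ over $S(a)$ is $\lesssim \nm{f}^p(1-|a|^2)^p$. Thus $T$ is bounded, and more importantly the box quantity for boxes with $|a|>r$ is uniformly small, namely $\lesssim (1-r)^p\nm{f}^p$. To get compactness, take a bounded sequence $f_n$. It is locally uniformly bounded, so a subsequence converges locally uniformly to some $f\in\MaD{\a}{p}{s}$; the limit lies in the space by Fatou. Set $h_n=f_n-f$ and split each box integral of $h_n$:
\begin{itemize}
\item for $|a|\le r$, the integral over $S(a)\cap\{|z|\le\rho\}$ tends to $0$ by uniform convergence;
\item the part of $S(a)$ with $|z|>\rho$ is bounded by the growth estimate, with a factor small when $\rho\to1$ relative to $r$;
\item boxes with $|a|>r$ are small by the uniform bound above.
\end{itemize}
Hence $\nm{T h_n}\to 0$, so $T$ is compact. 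The step I expect to need care is the middle bullet: with $|a|\le r$ fixed, the tail $\int_{|z|>\rho}(1-|z|^2)^{s-p\a}\,dA$ tends to $0$ as $\rho\to1$ because $s-p\a>-1$, and the prefactor is bounded by a constant depending on $r$, so this works.

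For (iii), write $P(z)=\sum_k c_k z^k$, so $P'(z)=\sum_k k c_k z^{k-1}$. Then
$$
T_P f=\int_0^z f P'=\sum_k k c_k\, T(M^{k-1}f),
$$
that is, $T_P=\sum_k kc_k\,TM^{k-1}$. Each summand is the compact operator $T$ composed with the bounded operator $M^{k-1}$, so each is compact, and a finite sum of compact operators is compact. This gives (iii).
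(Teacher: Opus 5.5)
Your proof is correct. Part (iii) is exactly the paper's argument: the identity $T_P=\sum_k kc_k\,TM^{k-1}$ is what the cited result of Siskakis--Zhao amounts to.

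For (i) and (ii) you take a genuinely different and more elementary route.

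\textbf{The paper's route.} For (i), it first obtains boundedness of $T$ from Proposition~\ref{propo: Tg bounded Mad univalent} applied to the univalent symbol $g(z)=z$. That proposition passes through boundedness of $T_g$ on $D^p_s$ and the Pau--Zhao transfer lemma. The paper then deduces (i) from $(Mf)'=(Tf)'+zf'$. For compactness in (ii), it cites Pau--Zhao when $s>p-1$. When $s\le p-1$, it uses the Blasi--Pau lemma to replace $\abs{f_n-f_n(0)}^p$ by $\abs{f_n'}^p(1-\abs{z}^2)^p$, and then splits into a compact disc plus an annulus where the extra factor $(1-\abs{z}^2)^p$ is small.

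\textbf{Your route.} You use only the pointwise bound $\abs{f(z)}\lesssim\nm{f}_{\MaD{\a}{p}{s}}(1-\abs{z}^2)^{-\a}$, available precisely because $\a>0$. You combine it with $s-p\a>p-2>-1$. This gives the quantitative estimate that the normalized box integral of $\abs{f}^p$ over $S(a)$ is $O\big((1-\abs{a})^p\nm{f}^p\big)$. That single estimate:
\begin{itemize}
\item gives boundedness of $M$ and $T$ at once;
\item covers the whole admissible range without the case split at $s=p-1$ or external citations;
\item shows directly that $T$ maps $\MaD{\a}{p}{s}$ into $\maD{\a}{p}{s}$.
\end{itemize}
Your compactness argument chooses $r$, then $\rho$, then $n$, which is the correct order. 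Montel's theorem (local boundedness from the growth estimate) and Fatou's lemma justify the subsequence extraction and the membership of the limit.

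\textbf{What each buys.} The paper's approach reuses machinery it needs later anyway. Its tail estimate works with $f_n'$ rather than with pointwise growth of order $\a$. Yours is self-contained and uniform across the admissible range.
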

\begin{proof}
	First, we notice that by Proposition~\ref{propo: Tg bounded Mad univalent} then $T$ is bounded, as a byproduct $M$ is also bounded
	\begin{equation*}
		\begin{split}
			\nm{Mf}_{\MaD{\a}{p}{s}}^p &= \sup_{a\in \D} \frac{1}{(1-\abs{a}^2)^{s-(p(\a+1)-2)}}\int_{S(a)} \abs{f(z)+zf'(z)}^p(1-\abs{z}^2)^sdA(z) \\ &\lesssim
			\nm{Tf}_{\MaD{\a}{p}{s}}^p + \nm{f}_{\MaD{\a}{p}{s}}^p \lesssim \nm{f}_{\MaD{\a}{p}{s}},
		\end{split}
	\end{equation*}
	this proves (i). \\
	Now lets check (ii). First, if $s>p-1$ the compactness of $T$ follows from \cite[Theorem 5.2]{PauZhao}, so we just need to check the case $s\leq p-1$. \\
	Let $\{f_n\}$ be a uniformly bounded sequence on $\MaD{\a}{p}{s}$, such that $\{f_n\}$ converge to zero uniformly on compact subsets of $\D$, we need to check that 
	$$
	\lim_{n\to \infty} \nm{Tf_n}_{\MaD{\a}{p}{s}} =0,
	$$ 
	to simplify the notation, let $\beta =s-(p(\a+1)-2)$, we divide the norm in two different terms
	\begin{equation*}
		\begin{split}
			\nm{Tf_n}_{\MaD{\a}{p}{s}}^p &\lesssim \abs{f_n(0)}^p\sup_{a\in\D}\int_\D \frac{(1-\abs{z}^2)^s(1-\abs{a}^2)^\beta}{\abs{1-\overline{a}z}^{2\beta}}dA(z) \\
			&+ \sup_{a\in\D}\int_\D \abs{f_n(z)-f_n(0)}^p \frac{(1-\abs{z}^2)^s(1-\abs{a}^2)^\beta}{\abs{1-\overline{a}z}^{2\beta}}dA(z) \\
			&= I(f_n)+II(f_n),
		\end{split}
	\end{equation*}
	for the first term, simply by noticing that the supremum is finite, then because $f_n $ converges pointwise to $0 $, we obtain that
	$$
	\lim_{n\to \infty} I(f_n)=0.
	$$
	For the second term, by \cite[Lemma 2.1]{BlasiPau} (notice the lemma requires $2\beta<2+s$, this is equivalent to $s<2(p-1)+2p\a$, that holds as $s\leq p-1$), then
	$$
	II(f_n) \lesssim \sup_{a\in\D} \int_\D \abs{f_n'(z)}^p \frac{(1-\abs{z}^2)^{s+p}(1-\abs{a}^2)^\beta}{\abs{1-\overline{a}z}^{2\beta}}dA(z).
	$$
	Let $\e>0$, lets check that there exists $n_0\in\N$ such that if $n\geq n_0$ then 
	$$
	II(f_n')=\sup_{a\in\D} \int_\D \abs{f_n'(z)}^p \frac{(1-\abs{z}^2)^{s+p}(1-\abs{a}^2)^\beta}{\abs{1-\overline{a}z}^{2\beta}}dA(z)<\e.
	$$
	Take $R>0$ such that $(1-R^2)^p\leq \frac{\e}{2\sup_{n\in\N} \nm{f_n}_{\MaD{\a}{p}{s}}^p}$, on the other hand $f_n'$ (because $f_n$ does )converges to zero uniformly on $\overline{D(0,R)}$, then there exists $n_0\in\N$ such that if $n\geq n_0$, then $\abs{f_n'(z)}^p< \frac{\e}{2S}$ for all $z\in\overline{D(0,R)}$, where 
	$$
	S=\sup_{a\in\D} \int_\D \frac{(1-\abs{z}^2)^{s+p}(1-\abs{a}^2)^\beta}{\abs{1-\overline{a}z}^{2\beta}}dA(z)<\infty,
	$$
	putting this together, we obtain that for every $n\geq n_0$
	\begin{equation*}
		\begin{split}
			II(f_n') &\leq  \sup_{a\in\D} \int_{\overline{D(0,R)}} \abs{f_n'(z)}^p \frac{(1-\abs{z}^2)^{s+p}(1-\abs{a}^2)^\beta}{\abs{1-\overline{a}z}^{2\beta}}dA(z)  \\
			&+ \sup_{a\in\D} \int_{\set{z\in\D: \abs{z}>R}} \abs{f_n'(z)}^p \frac{(1-\abs{z}^2)^{s+p}(1-\abs{a}^2)^\beta}{\abs{1-\overline{a}z}^{2\beta}}dA(z) \\
			&< \frac{\e}{2} + (1-R^2)^p\nm{f_n}_{\MaD{\a}{p}{s}}^p < \e.
		\end{split}
	\end{equation*}

	Then we obtain that
	$$
	\lim_{n\to\infty}\nm{Tf_n}_{\MaD{\a}{p}{s}}=0,
	$$
	so $T$ is compact on $\MaD{\a}{p}{s}$. \\
	After proving (i) and (ii), (iii) follows from \cite[Proposition 2.2 (ii)]{SiskakisZhao}. 
\end{proof}
For each $\a\geq 0$ consider the family of bounded operator 
$$\mathcal{L_\a}= \set{L_{a,\a} : L_{a,\a}(f)= (1-\abs{a}^2)^\a(f\circ{\phi_a}-f(a)),\, a\in\D},
$$
on $D^p_s$, then in the context of $o-O$ types spaces introduced by Perfekt in \cite{PerfektDual} we have
$$
\MaD{\a}{p}{s} = M(D^p_s,\mathcal{L}_\a)\quad \text{and} \quad \maD{\a}{p}{s}=M_0(D^p_s,\mathcal{L}_\a).
$$
\begin{proposition}\label{propo: Tg compact mad}
	Let $p, s, \alpha $ in the admissible range. Let $g\in\U$, the following are equivalent:
	\begin{enumerate}
		\item[(i)] $T_g$ is compact on $\MaD{\a}{p}{s}$;
		\item[(ii)] $g\in \maD{0}{p}{s}$;
		\item[(iii)] $T_g$ is weakly compact on $\MaD{\a}{p}{s}$;
		\item[(iv)] $T_g(\MaD{\a}{p}{s})\subset \maD{\a}{p}{s}$.
	\end{enumerate}
\end{proposition}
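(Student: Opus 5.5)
The plan is to prove (i)$\Rightarrow$(iii)$\Rightarrow$(iv)$\Rightarrow$(ii)$\Rightarrow$(i). Two of these implications are easy. The implication (i)$\Rightarrow$(iii) is trivial. For (iii)$\Rightarrow$(iv) I would use Perfekt's $o$--$O$ theory: by the identification $\MaD{\a}{p}{s}=M(D^p_s,\mathcal{L}_\a)$ and $\maD{\a}{p}{s}=M_0(D^p_s,\mathcal{L}_\a)$ (with $D^p_s$ reflexive), $M$ is isometric to the bidual of $M_0$. A weakly compact operator $T$ on a bidual $X^{**}$ which is the biadjoint of an operator on $X$ satisfies $T(X^{**})\subset X$ by Gantmacher's theorem. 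So the point is to see that $T_g$ maps $\maD{\a}{p}{s}$ into itself and that, on $\MaD{\a}{p}{s}$, it coincides with the biadjoint of its restriction.

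For this I would argue as follows. Boundedness of $T_g$ (already implicit in (iii)) together with Lemma~\ref{lemma: Tg compact poli}(i) shows that $T_g$ maps polynomials into $\maD{\a}{p}{s}$: indeed $T_g(z^n)$ lies in the closure of the polynomials, since $g_r\to g$ with $T_{g_r}$ and $T_g$ comparable. Corollary~\ref{coro: density polynomials} then gives $T_g(\maD{\a}{p}{s})\subset\maD{\a}{p}{s}$. The identification with the biadjoint follows from weak-star continuity, because weak-star convergence is locally uniform convergence on bounded sets, and $T_g$ respects that convergence.

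For (iv)$\Rightarrow$(ii) I would test on the functions $f_{\zeta}(z)=(1-\overline{\zeta}z)^{-\a}$ when $\a>0$ (respectively $l_\zeta$ when $\a=0$). These are uniformly bounded in $\MaD{\a}{p}{s}$ by Lemma~\ref{lemma: test funct a>0}. However, the inclusion $T_g f\in\maD{\a}{p}{s}$ for each single $f$ only gives a vanishing condition at the boundary for a fixed $\zeta$. To get uniformity I would first upgrade (iv) to compactness-type information. Since $g$ is univalent and (iv) forces $T_g$ to be bounded (closed graph), Proposition~\ref{propo: Tg bounded Mad univalent} gives $g\in\B$. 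Next I would take a single function, $f=f_{\zeta_0}$ or a constant. With $f\equiv1$ we get $T_g1=g-g(0)\in\maD{\a}{p}{s}$, which for $\a>0$ is weaker than (ii). Instead I would pick $f(z)=(1-\overline{\zeta}z)^{-\a}$ summed cleverly, namely $f=\sum_k c_k f_{\zeta_k}$ along a sequence $\zeta_k\to\partial\D$, sparse enough that the norm stays bounded. If $g\notin\maD{0}{p}{s}$, i.e.\ (by Lemma~\ref{lemma: univalent bloch equiv maD}) $g\notin\B_0$, there are $\zeta_k$ with $(1-|\zeta_k|^2)|g'(\zeta_k)|\ge\delta$. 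Since $f_{\zeta_k}\asymp(1-|\zeta_k|)^{-\a}$ on $S(\zeta_k)$, the Carleson box expression in \eqref{eq: equiv norm maD} for $T_gf$ at $a=\zeta_k$ is bounded below by $\delta^p$ times a constant, using the subharmonicity estimate on a hyperbolic disc. This contradicts $T_gf\in\maD{\a}{p}{s}$. Controlling the interaction of the sparse sum, so that the norm of $f$ stays bounded and the cross terms do not cancel the main term, is the main technical obstacle. I would handle it by lacunary choice of $|\zeta_k|$ and by placing the $\zeta_k$ on distinct boxes.

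For (ii)$\Rightarrow$(i) I would approximate $g$. Since $g\in\maD{0}{p}{s}$, Corollary~\ref{coro: density polynomials} gives polynomials $P_n\to g$ in $\MaD{0}{p}{s}$. For $\a>0$ I would rather use the dilations $g_r(z)=g(rz)$, which are univalent. Then $g-g_r\in\MaD{0}{p}{s-\beta}$ with small norm: I would use (i) of Proposition~\ref{propo: Tg bounded Mad univalent} together with the little-$o$ version of Lemma~\ref{lemma: univalent bloch equiv maD}, so that $g\in\B_0$ implies $g\in\maD{0}{p}{s-\beta}$, where dilations converge in norm. Proposition~\ref{propo: Tg bounded Mad suff} then gives $\nm{T_g-T_{g_r}}\to0$. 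Each $T_{g_r}$ is compact, by Lemma~\ref{lemma: Tg compact poli}(iii) and polynomial approximation of $g_r$ in the same norm. Compact operators are norm-closed, so $T_g$ is compact. For $\a=0$ all equivalences follow directly from Theorem~\ref{thlet: tg log caract} and Corollary~\ref{coro: Blog univ equiv Flog}.
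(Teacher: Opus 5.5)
\textbf{The gap is in (iv)$\Rightarrow$(ii): your witness cannot work.} Take $f=\sum_k c_k f_{\zeta_k}$ with $f_\zeta(z)=(1-\overline{\zeta}z)^{-\a}$ and $\zeta_k\in\D$. Since $f_{\zeta_k}(0)=1$, convergence at the origin already forces $c_k\to0$. Keeping the norm bounded as you describe means $\sum_k\abs{c_k}<\infty$, so the series converges in $\MaD{\a}{p}{s}$. Each $f_{\zeta_k}$ is holomorphic on a neighbourhood of $\overline{\D}$, so it lies in $\maD{\a}{p}{s}$, and therefore $f\in\maD{\a}{p}{s}$. You noted yourself that a bounded $T_g$ preserves $\maD{\a}{p}{s}$. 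Hence $T_gf\in\maD{\a}{p}{s}$ automatically, and no lacunarity or placement of the $\zeta_k$ can produce a contradiction.

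In terms of your estimate, the main term at the box of $\zeta_k$ is of size $\abs{c_k}^p\delta^p$, not $\delta^p$; you dropped the factor $\abs{c_k}^p$. Since $g\in\MaD{0}{p}{s}$, the box quantity of $g'$ is bounded, so nothing compensates $c_k\to0$. A witness must lie genuinely in $\MaD{\a}{p}{s}\setminus\maD{\a}{p}{s}$. The ``upgrade of (iv) to compactness-type information'' you announce is never actually carried out.

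\textbf{How the paper does it.} The paper performs exactly that upgrade.
\begin{itemize}
\item Identify $T_g$ with the biadjoint of its restriction to $\maD{\a}{p}{s}$. Gantmacher's theorem then gives (iv)$\Rightarrow$(iii) as well as (iii)$\Rightarrow$(iv).
\item For $a_n\to\zeta\in\T$, compare $f_{a_n}$ with the boundary function $f_\zeta\in\MaD{\a}{p}{s}$.
\item The box quantity of $g$ at $a_n$ is controlled by two terms. The first is the box quantity of $T_gf_\zeta$ at $a_n$, which tends to $0$ by (iv). The second is $\nm{T_g(f_{a_n}-f_\zeta)}_{\MaD{\a}{p}{s}}^p$.
\item The second term tends to $0$ by Perfekt's estimate for weakly compact operators on $o$--$O$ spaces: $\nm{T_gh}_{\MaD{\a}{p}{s}}\le N\nm{h}_{D^p_s}+\e\nm{h}_{\MaD{\a}{p}{s}}$. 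This applies because $f_{a_n}-f_\zeta$ is bounded in $\MaD{\a}{p}{s}$ and tends to $0$ in $D^p_s$.
\end{itemize}

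\textbf{A direct construction would need different atoms.} If you want to avoid the duality machinery, use atoms that decay off their own box. For instance, take the functions $f_{w_k,\a,\lambda}$ of Lemma~\ref{lemma: test funct a>0} with $\lambda$ large, along a very lacunary sequence of points where $(1-\abs{w_k})\abs{g'(w_k)}\ge\delta$, with all coefficients equal to $1$. The partial sums then stay bounded without converging in norm, and $\abs{F}\asymp(1-\abs{w_k})^{-\a}$ near $w_k$. You would need to check this, but it is the right kind of witness.

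\textbf{Minor points.}
\begin{itemize}
\item Your (iii)$\Rightarrow$(iv) and (ii)$\Rightarrow$(i) follow the paper. Using dilations rather than polynomial approximation in $\maD{0}{p}{s-\beta}$ makes no difference.
\item Your justification that $T_g(z^n)\in\maD{\a}{p}{s}$ via ``$T_{g_r}$ and $T_g$ comparable'' is vague. It is cleaner to argue directly: boundedness and univalence give $g\in\MaD{0}{p}{s}$. Then the integral of $\abs{z^ng'}^p(1-\abs{z}^2)^s$ over $S(I)$ is $O(\abs{I}^{s-(p-2)})$, which is $o(\abs{I}^{s-(p(\a+1)-2)})$ since $\a>0$.
\item Your closing remark on $\a=0$ is incorrect. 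Theorem~\ref{thlet: tg log caract} and Corollary~\ref{coro: Blog univ equiv Flog} give compactness iff $g\in\B_{\log,0}$, whereas for univalent $g$ condition (ii) means $g\in\B_0$. These differ; for example, $g=(\log\frac{e}{1-z})^{1/2}$ is univalent and lies in $\B_0\setminus\B_{\log}$. So the statement must be read with $\a>0$, as in the neighbouring results.
\end{itemize}
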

\begin{proof}
	(i) implies (iii) by definition. \\
	To prove the equivalence between (iii) and (iv), first by \cite[Theorem 2.2]{PerfektDual} and the previous observation 
	$$
	(\maD{\a}{p}{s})^{\star \star} \simeq  \MaD{\a}{p}{s}.
	$$ 
	Also it can be checked that $M_\alpha(D^p_s) $ embeds continuously in $\mathcal{H}(\mathbb{D}) $ with the weak-$\star $ topology as a dual of $m_\alpha(D^p_s) $, therefore by an approximation argument  $T_g^{\star \star} = T_g$. 
	Then the equivalence follows from Gantmacher's theorem \cite[Theorem 5.23]{PositiveOperators}
	(iii) implies (ii), let $g\in U$, consider $\{a_n\}\subset \D$ such that $\abs{a_n}\to1^-$, without loss of generality we can assume $\lim_{n\to \infty}a_n =\zeta\in\T$, to prove that $g\in \maD{0}{p}{s}$ it is enough to check that
	$$
	\lim_{n\to \infty} \frac{1}{(1-\abs{a_n}^2)^{s-(p-2)}}\int_{S(a_n)}\abs{g'(z)}^p(1-\abs{z}^2)^sdA(z)=0.
	$$
	let 
	$$
	f_n(z)=\frac{1}{(1-\overline{a_n}z)^\a},\quad \text{and} \quad f_0(z) =\frac{1}{(1-\overline{\zeta}z)^\a},
	$$
	by Lemma~\ref{lemma: test funct a>0} we know that $sup_{n\in\N\cup\{0\}}\nm{f_n}_{\MaD{\a}{p}{s}}<\infty$. Consider now $h_n=f_n -f_0$, then these are also uniformly bounded on $\MaD{\a}{p}{s}$, in addition $f_n$ converges to $f$ uniformly on a bigger disk, then $\lim_{n\to \infty}\nm{h_n}_{D^p_s}=0$, then
	\begin{equation*}
		\begin{split}
			&\frac{1}{(1-\abs{a_n}^2)^{s-(p-2)}}\int_{S(a_n)}\abs{g'(z)}^p(1-\abs{z}^2)^sdA(z)\\
			&\asymp \frac{1}{(1-\abs{a_n}^2)^{s-(p(\a+1)-2)}}\int_{S(a_n)}\abs{f_n(z)g'(z)}^p(1-\abs{z}^2)^sdA(z) \\
			&\lesssim \frac{1}{(1-\abs{a_n}^2)^{s-(p(\a+1)-2)}}\int_{S(a_n)}\abs{(T_gf_0)'(z)}^p(1-\abs{z}^2)^sdA(z) + \nm{T_g(h_n)}_{\MaD{\a}{p}{s}}^p
		\end{split}
	\end{equation*}
	For the first term, by (iv) $T_g(\MaD{\a}{p}{s})\subset \maD{\a}{p}{s}$, in particular $T_gf_0 \in\maD{\a}{p}{s}$ implying
	$$
	\lim_{n\to \infty} \frac{1}{(1-\abs{a_n}^2)^{s-(p(\a+1)-2)}}\int_{S(a_n)}\abs{(T_gf_0)'(z)}^p(1-\abs{z}^2)^sdA(z) =0,
	$$
	for the second term, by \cite[Corollary 3.3]{PerfektCompact}, for each $\varepsilon>0$ there exists $N>0$ such that
	$$
	\nm{T_g(h_n)}_{\MaD{\a}{p}{s}} \leq N\nm{h_n}_{D^p_s} + \e \nm{h_n}_{\MaD{\a}{p}{s}},
	$$
	by applying that the family $h_n$ is uniformly bounded on $\MaD{\a}{p}{s}$ and the fact that $\lim_{n\to\infty}\nm{h_n}_{D^p_s}=0$, we conclude that
	$$
	\nm{T_g(h_n)}_{\MaD{\a}{p}{s}}=0.
	$$
	It remains to check that (ii) implies (i). First, as $g\in \maD{0}{p}{s}\cap\U$, the same argument of  Proposition~\ref{propo: Tg bounded Mad univalent} (i) shows that $g\in\maD{0}{p}{s-\beta}$, so there exists a sequence of polynomials $\{P_n\}$ such that $\lim_{n\to\infty}\nm{g-P_n}_{\MaD{0}{p}{s-\beta}}=0$. If we consider now the associated Volterra operators $T_{P_n}$, these are compact by Lemma~\ref{lemma: Tg compact poli}, and by Proposition~\ref{propo: Tg bounded Mad suff} 
	$$
	\nm{T_g-T_{P_n}}_{\MaD{\a}{p}{s}}=\nm{T_{g-P_n}}_{\MaD{\a}{p}{s}}\lesssim \nm{g-P_n}_{\MaD{0}{p}{s-\beta}}\rightarrow 0,
	$$
	so $T_g$ can be approximated in the operator norm by a sequence of compact operators, then $T_g$ is compact.
\end{proof}
\subsection{A technical Lemma}
The aim of this section is to prove a result in the spirit of \cite[Proposition 3.5]{ChalDask} in the context of $M_\a(D^p_s)$. We deal with the cases $\a=0$ and $\a>0$ by separated ways. First, if $\a=0$ the Möbius invariant behaviour of $M_0(D^p_s)$ allows us to adapt \cite[Proposition 3.5]{ChalDask} without much effort. 
\begin{proposition}\label{lemma: Tg log}
	Let $1<p<\infty$, $p-2<s<\infty$ and $g\in\MaD{0}{p}{s}\setminus F_{\log,0}(p,p-2,s-(p-2))$. There exists a function $F\in\MaD{0}{p}{s}$ such that $T_gF\in\MaD{0}{p}{s}\setminus\maD{0}{p}{s}.$
\end{proposition}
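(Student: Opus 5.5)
We will adapt the construction of \cite[Proposition 3.5]{ChalDask}, using the test functions $l_w$ from Lemma~\ref{lemma: test funct a>0} (i) and the box form of the norm in Proposition~\ref{prop: classif Mad} (ii) with $\a=0$.

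\textbf{Step 1: isolate a sequence of points.} Since $g\notin F_{\log,0}(p,p-2,s-(p-2))$, Lemma~\ref{lemmalet: equiv norm Disk-Box} (ii) gives a sequence $a_n\in\D$ with $\abs{a_n}\to1$ and some $c>0$ such that
$$
\frac{\left(\log\frac{e}{1-\abs{a_n}^2}\right)^p}{(1-\abs{a_n}^2)^{s-(p-2)}}\int_{S(a_n)}\abs{g'(z)}^p(1-\abs{z}^2)^s\,dA(z)\geq c .
$$
Passing to a subsequence, we may assume that the $a_n$ converge to some $\zeta\in\T$. We may also assume that $1-\abs{a_n}$ decreases as fast as we like, for instance lacunarily, say $1-\abs{a_{n+1}}\leq (1-\abs{a_n})^{N}$ with $N$ large.

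\textbf{Step 2: build $F$.} We try $F=\sum_n \epsilon_n\, l_{a_n}$, or better the single function $F=l_\zeta=\log\frac{e}{1-\overline{\zeta}z}$. The key point is that on $S(a_n)$ we have $\abs{1-\overline{\zeta}z}\asymp 1-\abs{a_n}$ if $a_n$ lies on the radius towards $\zeta$. We can arrange this by a rotation, because all the norms involved are rotation invariant: choosing $a_n=\abs{a_n}e^{i\theta_n}$, the choice $F_n=l_{a_n}$ works box by box. To obtain one function for all boxes, we take $F=\sum_k l_{a_{n_k}}\cdot 2^{-k}$ along a subsequence, or, following \cite{ChalDask}, a lacunary sum $F=\sum_k \lambda_k\, l_{a_{n_k}}$ whose pieces are uniformly bounded in $\MaD{0}{p}{s}$. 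Since the family $\{l_w\}$ is uniformly bounded, a normalized convex-type combination lies in $\MaD{0}{p}{s}$. The lacunarity makes $\abs{F(z)}\gtrsim \log\frac{e}{1-\abs{a_{n_k}}}$ on a large portion of $S(a_{n_k})$, since the other terms are either bounded or of controlled size there.

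\textbf{Step 3: $T_gF\in \MaD{0}{p}{s}\setminus\maD{0}{p}{s}$.} For the lower bound, on the portion of $S(a_{n_k})$ where $\abs{F}\gtrsim\log\frac{e}{1-\abs{a_{n_k}}^2}$ we get
$$
\frac{1}{(1-\abs{a_{n_k}}^2)^{s-(p-2)}}\int_{S(a_{n_k})}\abs{F g'}^p(1-\abs{z}^2)^s\,dA\gtrsim c.
$$
This requires that a fixed fraction of the mass of $\abs{g'}^p(1-\abs z^2)^s$ in the box sits where $F$ is large. We secure this by shrinking to the top half of a slightly smaller box and using subharmonicity of $\abs{g'}^p$, or by choosing $a_n$ as points realizing the non-vanishing. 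Hence $T_gF\notin\maD{0}{p}{s}$. For the upper bound we need $T_gF\in\MaD{0}{p}{s}$. We split $F=\sum$ and use that $g\in\MaD{0}{p}{s}\subset\B$ together with the growth $\abs{F(z)}\lesssim \log\frac{e}{1-\abs z}$. On a box $S(a)$ the dominant term of $F$ contributes $\abs{l_{w}}\lesssim\log\frac{e}{1-\abs a}$ plus a bounded oscillation in the box. The product of the square of the log-type size with the Carleson box mass must then be controlled. Here $g\in\MaD{0}{p}{s}$ only gives a bounded box ratio without the logarithm, so we instead use the decomposition $F=F(a)+(F-F(a))$ on $S(a)$. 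The term $\abs{F(a)}^p$ times the box integral is controlled only if $\abs{F(a)}$ is bounded on boxes where $g$ is large, which is false in general.

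\textbf{Main obstacle.} The upper bound in Step 3 is the hardest part: showing $T_gF\in\MaD{0}{p}{s}$ despite $g\notin F_{\log}$ possibly holding. The first remedy we would try is to pass from $g$ to its truncation $g\circ\phi$ near each $a_{n_k}$, or to select the coefficients $\lambda_k=1/\log\frac{e}{1-\abs{a_{n_k}}}\cdot\Big(\frac{(1-\abs{a_{n_k}}^2)^{s-(p-2)}}{\int_{S(a_{n_k})}\abs{g'}^p(1-\abs z^2)^s}\Big)^{1/p}$, which stays bounded by Step 1, so that each piece $T_g(\lambda_k l_{a_{n_k}})$ has norm $\asymp1$. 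Using Möbius invariance, we would then verify that the sparse pieces do not interact, reducing to \cite[Proposition 3.5]{ChalDask} verbatim with $BMOA$ replaced by the box characterization of Proposition~\ref{prop: classif Mad}.
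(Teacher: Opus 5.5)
There is a genuine gap, and it sits exactly where you locate the ``main obstacle'': the proposal never shows $T_gF\in\MaD{0}{p}{s}$, and the suggested remedy cannot supply it. Write $L_k=\log\frac{e}{1-\abs{a_{n_k}}}$ and $R_k=(1-\abs{a_{n_k}}^2)^{-(s-(p-2))}\int_{S(a_{n_k})}\abs{g'}^p(1-\abs{z}^2)^s\,dA$, so that $\lambda_k=(L_k^pR_k)^{-1/p}$.

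This normalization only makes the single box $S(a_{n_k})$ contribute $\asymp1$. By contrast, $\nm{T_g(\lambda_kl_{a_{n_k}})}$ is a supremum over all boxes. On a smaller box $S(b)\subset S(a_{n_k})$ in the same direction you still have $\abs{l_{a_{n_k}}}\asymp L_k$, so that box contributes $\asymp R(b)/R_k$. You chose $a_{n_k}$ only through the lower bound $L^pR\ge c$, not as a maximizer. So nothing rules out a box below it with a much larger ratio, and $g\notin F_{\log}(p,p-2,s-(p-2))$ makes such boxes available. The only general estimate, $\nm{T_g(\lambda_kl_{a_{n_k}})}\lesssim\lambda_k\nm{l_{a_{n_k}}}_{H^\infty}\nm{g}\asymp R_k^{-1/p}\nm{g}$, blows up precisely when $g\in\maD{0}{p}{s}$, which is the only case where a construction is actually needed.

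Moreover, bounded $\lambda_k$ do not give $F\in\MaD{0}{p}{s}$. If $g\in F_{\log}(p,p-2,s-(p-2))\setminus F_{\log,0}(p,p-2,s-(p-2))$ they are $\asymp1$. Since $\Re l_w>1-\log 2$ on $\D$, the series $\sum_k\lambda_kl_{a_{n_k}}$ then diverges at every point. The claim that sparse pieces ``do not interact'' is unsupported. Citing \cite[Proposition 3.5]{ChalDask} at the end does not close the gap, because the mechanism you describe (one-box normalization plus lacunarity) is not what makes that construction work.

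What is missing is the following. First, a case reduction. If $g\notin\maD{0}{p}{s}$, take $F\equiv1$. If $g\in F_{\log}\setminus F_{\log,0}$, then $T_g$ is bounded but not compact, so by Theorem~\ref{thlet: tg log caract} it does not map $\MaD{0}{p}{s}$ into $\maD{0}{p}{s}$, and $F$ exists abstractly.

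Only the case $g\in\maD{0}{p}{s}\setminus F_{\log}(p,p-2,s-(p-2))$ requires a construction. The paper uses a gliding hump with the bounded test functions $\beta_w$ of \cite[Lemma 3.4]{ChalDask}, which satisfy $\Re\beta_w\ge0$ and are uniformly bounded in $\MaD{0}{p}{s}$. The steps are:
\begin{enumerate}
\item Given the bounded partial sum $F_{n-1}$, use $g\in\maD{0}{p}{s}$ to choose $\delta_n$ so that every normalized box integral of $\abs{F_{n-1}g'}^p$ over arcs of length $\le\delta_n$ is at most $2^{-p}$.
\item Choose $w_n$ so close to $\T$ that $\beta_{w_n}$ is harmless on arcs of length $\ge\delta_n$, yet (since $g\notin F_{\log}$) has a huge box integral.
\item Let $M_n$ be the \emph{maximum} over all arcs of length $\le\delta_n$ of the normalized integral of $\abs{\beta_{w_n}g'}^p$. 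It is attained because $\beta_{w_n}\in H^\infty$ and $g\in\maD{0}{p}{s}$. Set $a_n=M_n^{-1}\le2^{-n}$ and let $I_n$ be a maximizing arc.
\end{enumerate}
Maximality is what controls the new term on every short arc, and it gives $\nm{T_gF_n}\le\max\{\nm{T_gF_{n-1}}+2^{-n}C(g),C(g)\}$. The lower bound on $S(I_n)$ is built into the choice of $I_n$. Positivity of $\Re\beta_w$ gives $\abs{F}\ge\Re F\ge\Re F_n$ there, so the lower bound survives in the limit; this also removes your concern about where the mass of $g'$ sits. Finally, Fatou's lemma yields $T_gF\in\MaD{0}{p}{s}$.
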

\begin{proof}
	If $g\in\MaD{0}{p}{s}\setminus \maD{0}{p}{s}$, then the function $F\equiv 1$ satisfies the desired properties. On the other hand, if $g\in F_{\log}(p,p-2,s-(p-2))\setminus F_{\log,0}(p,p-2,s-(p-2))$, then by Theorem~\ref{thlet: tg log caract} we have that $T_g(\MaD{0}{p}{s})\subset \MaD{0}{p}{s}$ and $T_g(\MaD{0}{p}{s})\nsubseteq \maD{0}{p}{s}$; therefore we can find a function $F$ as claimed in the thesis of the theorem.\\
	Therefore we can reduce the problem to the case $g\in\maD{0}{p}{s}\setminus F_{\log}(p,p-2,s-(p-2))$. For a point $w$, denote by $w^\star$ the hyperbolic midpoint between $0$ and $w$ and define the function
	$$
	\beta_w(z) = \log\frac{e}{1-\phi_{w^\star}(z)\overline{w}},
	$$ 
	the essential properties of these functions are summarized in \cite[Lemma 3.4]{ChalDask}. Moreover, by the Möbius invariance of $M_0(D^p_s)$ and Lemma~\ref{lemma: test funct a>0} we obtain that $\nm{\beta_w}_{M_0(D^p_s)}\lesssim 1$ for every $w\in \D$. Then we can essentially follow the proof of \cite[Proposition 3.5]{ChalDask} (see also Proposition~\ref{propo: construction function general}) to construct a sequence of intervals $I_n$ and a sequence of functions $F_n$ of the form 
	$$
	F_n(z)=\sum_{k=0}^n a_k \beta_{w_k}(z),
	$$
	for some $w_k\in \D$ such that
	\begin{enumerate}
		\item $0\leq a_k\leq 2^{-k}$;
		\item for all $n\in \N$ it holds
		$$
		\frac{1}{\abs{I_n}^{s-(p-2)}}\int_{S(I_n)}(\Re F_n(z))^p\abs{g'(z)}^p (1-\abs{z}^2)^sdA(z) \geq 1;
		$$
		\item for all $n\in \N$ we have $\nm{T_g F_n}_{M_0(D^p_s)}\leq \max\{\nm{T_g F_{n-1}}_{M_0(D^p_s)}+2^{-n}C(g),C(g)\}$, where $C(g)$ is a positive constant which depends only on $g$.
	\end{enumerate}
	Once the sequences are constructed, the function $F:= \sum_{k=0}^\infty a_k\beta_{w_k}$ verifies the desired hypothesis. $F\in M_0(D^p_s)$ as
	$$
	\nm{F}_{M_0(D^p_s)} \lesssim \sum_{k=0}^\infty a_k \nm{\beta_{w_k}}_{M_0(D^p_s)}\lesssim \sum_{k=0}^\infty a_k<\infty.
	$$
	Now, applying repeatedly property (3), we find that
	$$
	\nm{T_g F_n}_{M_0(D^p_s)}\leq \max\{\nm{T_g F_{0}}_{M_0(D^p_s)}+\sum_{r=1}^n2^{-r}C(g),C(g)\} \leq \nm{g}_{M_0(D^p_s)}+C(g),
	$$
	then by Fatou's lemma, if $I\subset \T$
	\begin{equation*}
		\begin{split}
			\frac{1}{\abs{I}^{s-(p-2)}}\int_{S(I)} \abs{g'(z)F(z)}^p(1-\abs{z}^2)^sdA(z) &\leq \liminf_{n\to \infty} \frac{1}{\abs{I}^{s-(p-2)}}\int_{S(I)} \abs{g'(z)F_n(z)}^p(1-\abs{z}^2)^sdA(z) \\
			&\leq \liminf_{n\to \infty} \nm{T_gF_n}_{\MaD{0}{p}{s}}^p \leq \nm{g}_{\MaD{0}{p}{s}}+C(g),
		\end{split}
	\end{equation*}
	so $T_gF\in\MaD{0}{p}{s}$. Finally, $T_g F\notin m_0(D^p_s)$ follows easily from (2)
	\begin{equation*}
		\begin{split}
			&\frac{1}{\abs{I_n}^{s-(p-2)}}\int_{S(I_n)} \abs{g'(z)F(z)}^p(1-\abs{z}^2)^sdA(z) \\
			&\geq \frac{1}{\abs{I_n}^{s-(p-2)}}\int_{S(I_n)}(\Re F_n(z))^p\abs{g'(z)}^p (1-\abs{z}^2)^sdA(z) \geq 1.
		\end{split}
	\end{equation*}
\end{proof}
Now dealing with the case $\a>0$ presents two main difficulties. Firstly, the lack of Möbius invariance force us to obtain a different type of test functions, secondly we will need to require stronger conditions in the hypothesis for our symbol $g$. In this first lemma, we construct some test functions.
\begin{lemma}\label{lemma: test function log}
Let $1<p<\infty$, $p-2<s<\infty$ and $0< \a <\frac{s-(p-2)}{p}$. If $g\in \bigcap_{0<\beta<\frac{s-(p-2)}{p}}\MaD{\beta}{p}{s}$, then there exists a family of functions $\{\beta_w^\a\}_{w\in\D}$ such that.
\begin{enumerate}
	\item[(i)] $\sup_{w\in\D}\nm{\beta_w^\a}_{\MaD{\a}{p}{s}}< \infty$;
	\item[(ii)] $\beta_w^\a\in H^\infty$ for each $w\in\D$;
	\item[(iii)] For every $z\in S(w)$ 
	$$
	\abs{\beta_w^\a(z)}\gtrsim \frac{1}{(1-\abs{w}^2)^\a};
	$$
	\item[(iv)] For all $\delta>0$ there exists $\delta'>0$ such that,
	$$
	\sup_{\abs{I}\geq \delta}\frac{1}{\abs{I}^{s-(p(\a+1)-2)}}\int_{S(I)} \abs{g'(z)\beta_w^\a(z)}^p(1-\abs{z}^2)^s dA(z) \leq 1,\quad 1-\abs{w}\leq\delta'.
	$$
\end{enumerate}
\end{lemma}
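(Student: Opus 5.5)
The natural candidate is a "truncated" version of the standard test function of Lemma~\ref{lemma: test funct a>0} (ii), centred at the hyperbolic midpoint, in analogy with the functions $\beta_w$ used for $\alpha=0$. Concretely, for $w\in\D$ let $w^\star$ be the hyperbolic midpoint between $0$ and $w$. I would set
$$
\beta_w^\a(z)=\frac{(1-\abs{w^\star}^2)^{\lambda}}{(1-\overline{w}z)^{\a+\lambda}},
$$
or a similar expression with the pole pushed slightly outside $\overline{\D}$ and a normalisation chosen so that $(1-\abs{w}^2)^{-\a}$ appears on $S(w)$. Property (ii) is then immediate, since $\abs{w}<1$ makes the function bounded on $\D$.

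For property (i), I would take $\lambda>0$ and use Lemma~\ref{lemma: test funct a>0} (ii) together with the fact that $(1-\abs{w^\star}^2)\asymp(1-\abs{w}^2)^{1/2}$. If the normalisation does not match exactly, I would instead use $f_{w,\a,\lambda}$ directly:
$$
\beta^\a_w=\frac{(1-\abs{w}^2)^\lambda}{(1-\overline{w}z)^{\a+\lambda}}.
$$
This is uniformly bounded in $\MaD{\a}{p}{s}$ and bounded in $H^\infty$ for fixed $w$. On $S(w)$ we have $\abs{1-\overline{w}z}\asymp 1-\abs{w}^2$, which gives $\abs{\beta_w^\a}\gtrsim (1-\abs{w}^2)^{-\a}$, i.e. property (iii). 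So (i)--(iii) reduce to the earlier lemma, and I would commit to this simpler choice.

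The real work is (iv): for large intervals ($\abs{I}\ge\delta$) the Carleson-box quantity of $g'\beta_w^\a$ must become small as $\abs{w}\to1$. I would split $S(I)$ into the region near the point $w/\abs{w}$, where $\abs{1-\overline{w}z}<\eta$, and its complement.

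On the complement, $\abs{\beta_w^\a(z)}\lesssim (1-\abs{w}^2)^\lambda\eta^{-\a-\lambda}$. The integral of $\abs{g'}^p(1-\abs{z}^2)^s$ over $S(I)$ is finite: it is bounded by $\nm{g}_{\MaD{0}{p}{s}}$-type quantities, and the weight $\abs{I}^{-(s-(p(\a+1)-2))}\le\delta^{-(\cdots)}$ is harmless. This piece therefore tends to $0$ as $\abs{w}\to1$.

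Near the point, I would decompose into dyadic boxes $S_k$ around $w$ where $\abs{1-\overline wz}\asymp 2^k(1-\abs w^2)$. Here the hypothesis $g\in\MaD{\beta}{p}{s}$ with $\beta$ close to $(s-(p-2))/p$ enters: it gives
$$
\int_{S_k}\abs{g'}^p(1-\abs z^2)^s\lesssim \bigl(2^k(1-\abs w^2)\bigr)^{s-(p(\beta+1)-2)}.
$$
Multiplying by $\abs{\beta_w^\a}^p\asymp (1-\abs w^2)^{\lambda p}(2^k(1-\abs w^2))^{-(\a+\lambda)p}$ and summing over $k$ up to the scale $\eta$ gives a geometric sum. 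Choosing $\beta>\a$ (allowed since $\a<(s-(p-2))/p$), the exponent $s-(p-2)-p\beta-p\a$ can be made as close to $-p\a$ as we like. I expect this exponent bookkeeping to be the main obstacle. The sum must be dominated by its top term at scale $\eta$, with size $\lesssim \eta^{s-(p-2)-p\beta-p\a}\,(1-\abs w^2)^{\lambda p}(\cdots)$, and I must make sure a positive power of $(1-\abs w^2)$ survives. For that I would choose $\lambda$ large and $\beta$ so that $s-(p-2)-p\beta-p\a-\lambda p<0$, so the sum concentrates near $\eta$ and carries the factor $(1-\abs w^2)^{\lambda p}$ times a power of $\eta$.

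Finally, since $\abs I\ge\delta$, the prefactor is at most $\delta^{-(s-(p(\a+1)-2))}$. Taking $1-\abs w$ small enough relative to $\delta$ and $\eta$ then makes the total at most $1$, which yields $\delta'$.
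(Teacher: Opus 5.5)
Your choice $\beta_w^\alpha=f_{w,\alpha,\lambda}$ and the reduction of (i)--(iii) to Lemma~\ref{lemma: test funct a>0} agree with the paper. However, your argument for (iv) fails at exactly the bookkeeping step you flagged, for two reasons.

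First, $(1-|a|^2)^\beta$ decreases in $\beta$, so the spaces $M_\beta(D^p_s)$ \emph{increase} with $\beta$. The hypothesis $g\in\bigcap_\beta M_\beta(D^p_s)$ is therefore strongest for small $\beta$, but you invoke it with $\beta$ close to $\frac{s-(p-2)}{p}$, its weakest instance.

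Second, with $\lambda$ large you are in the regime $E:=s-(p-2)-p\beta-p\alpha-p\lambda<0$. There the terms $(1-|w|^2)^{\lambda p}\bigl(2^k(1-|w|^2)\bigr)^{E}$ \emph{decrease} in $k$. So the sum is dominated by the innermost box $k=0$, not by the scale $\eta$, and the resulting bound is of size $(1-|w|^2)^{s-(p-2)-p\alpha-p\beta}$. For your $\beta$ this exponent is close to $-p\alpha<0$, so the bound blows up as $|w|\to1$. The sum concentrates at the top scale only when $E>0$, and $E>0$ with $\lambda>0$ already forces $\beta<\frac{s-(p-2)}{p}-\alpha$.

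This is not just a lossy estimate. On $S(w)$ one has $|\beta_w^\alpha|\asymp(1-|w|^2)^{-\alpha}$ for every $\lambda$. For $0<\beta<\frac{s-(p-2)}{p}$, the function $g(z)=(1-z)^{-\beta}$ lies in $M_\beta(D^p_s)$ and satisfies $\int_{S(w)}|g'|^p(1-|z|^2)^s\,dA\gtrsim(1-w)^{s-(p-2)-p\beta}$ for $w\in(0,1)$. Taking an arc $I$ with $|I|\geq\delta$ and $S(I)\supset S(w)$, the quantity in (iv) is $\gtrsim(1-w)^{s-(p-2)-p\alpha-p\beta}$, which is unbounded as $w\to1^-$ when $\beta>\frac{s-(p-2)}{p}-\alpha$. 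So membership in a single $M_\beta(D^p_s)$ with $\beta$ near the top cannot yield (iv), whatever $\lambda$ you choose.

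The repair is to use the hypothesis at a small index $0<\gamma<\frac{s-(p-2)}{p}-\alpha$. The paper takes $\gamma=\frac{s-(p-2)-p\alpha}{2p}$ and $\lambda=\frac{s-(p-2)}{p}-\alpha$, so that $|\beta_w^\alpha(z)|^p=(1-|w|^2)^{s-(p(\alpha+1)-2)}|1-\overline{w}z|^{-(s-(p-2))}$. Then no splitting at scale $\eta$ is needed. Bound $|I|^{-(s-(p(\alpha+1)-2))}\le\delta^{-(s-(p(\alpha+1)-2))}$, enlarge $S(I)$ to $\mathbb{D}$, and decompose $\mathbb{D}$ dyadically around $w$:
\[
\int_{\mathbb{D}}|g'(z)|^p\frac{(1-|z|^2)^s}{|1-\overline{w}z|^{s-(p-2)}}\,dA(z)\lesssim\frac{\|g\|^p_{M_\gamma(D^p_s)}}{(1-|w|^2)^{s-(p-2)}}\sum_{k\geq0}\frac{\bigl(2^k(1-|w|^2)\bigr)^{s-(p-2)-p\gamma}}{2^{k(s-(p-2))}}\lesssim\frac{\|g\|^p_{M_\gamma(D^p_s)}}{(1-|w|^2)^{p\gamma}}.
\]
Hence the quantity in (iv) is $\lesssim\delta^{-(s-(p(\alpha+1)-2))}(1-|w|^2)^{s-(p-2)-p\alpha-p\gamma}\|g\|^p_{M_\gamma(D^p_s)}$. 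The exponent is positive, which gives $\delta'$. Your $\eta$-split also works once $\beta$ is taken this small, for any $\lambda>0$.

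Finally, in your far region the finiteness of $\int_{\mathbb{D}}|g'|^p(1-|z|^2)^s\,dA$ comes from $M_\gamma(D^p_s)\subset D^p_s$ (take $a=0$ in the definition of $\rho_{\gamma,p,s}$), not from $M_0(D^p_s)$-type bounds. Proposition~\ref{propo: construction function general} applies the lemma precisely to $g\notin M_0(D^p_s)$.
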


\begin{proof}
	If $\a>0$, for every $\lambda>0$ and $w\in \D$ consider the test functions presented in Lemma~\ref{lemma: test funct a>0},
	$$
	f_{w,\alpha,\lambda}(z) = \frac{(1-\abs{a}^2)^\lambda}{(1-\overline{w}z)^{\a + \lambda}},
	$$
	for this family of functions (i), (ii) and (iii) are straightforward, to obtain (iv) we fix $\lambda = \frac{s-(p-2)}{p}-\a$ and take $\beta_w^\a=f_{w,\alpha,\lambda}$. Lets check that this family verifies (iv), first simply notice that
	$$
	\sup_{\abs{I}\geq \delta}\frac{\int_{S(I)} \abs{g'(z)\beta_w^\a(z)}^p(1-\abs{z}^2)^s dA(z)}{\abs{I}^{s-(p(\a+1)-2)}}\leq \frac{(1-\abs{w}^2)^{s-(p(\a+1)-2)}}{\delta^{s-(p(\a+1)-2)}}\int_\D \abs{g'(z)}^p\frac{(1-\abs{z})^s}{\abs{1-\overline{w}z}^{s-(p-2)}}dA(z),
	$$
	lets focus in the integral in the right-hand side.
	
	Following the idea in the proof of Proposition~\ref{propo: Tg bounded Mad suff}, fix $w\in\D\setminus\{0\}$ and consider $N(w)=\min\set{n\in\N: 2^n(1-\abs{w}^2)\geq 1}$, for every $0\leq k < N(w)$ set $S_k(w)=S\left ((1-2^k(1-\abs{w}^2))e^{i\arg{w}} \right )$, $S_{-1}(w)=\emptyset$ and $S_{N(w)}=\D$. Then for every $0<\gamma<\frac{s-(p-2)}{p}$
	\begin{equation*}
		\begin{split}
			\int_\D \abs{g'(z)}^p\frac{(1-\abs{z})^s}{\abs{1-\overline{w}z}^{s-(p-2)}}dA(z) &= \sum_{k=0}^{N(w)} \int_{S_k(w)\setminus S_{k-1}(w)}\abs{g'(z)}^p\frac{(1-\abs{z}^2)^s}{\abs{1-\overline{w}z}^{s-(p-2)}}dA(z) \\
			&\lesssim (1-\abs{w}^2)^{-(s-(p-2))}\sum_{k=0}^{N(w)}\frac{1}{2^{k(s-(p-2))}}\int_{S_k(w)}\abs{g'(z)}^p(1-\abs{z}^2)^sdA(z) \\
			&\leq (1-\abs{w}^2)^{-p\gamma}\nm{g}_{\MaD{\gamma}{p}{s}}^p\sum_{k=0}^{N(w)}\frac{1}{2^{kp\gamma}}\\
			&\leq C(\gamma) (1-\abs{w}^2)^{-p\gamma}\nm{g}_{\MaD{\gamma}{p}{s}}^p.
		\end{split}
	\end{equation*} 
	Fixing $\gamma =\frac{s-(p-2)}{2p}-\frac{\a}{2}$, we obtain
	$$
	\sup_{\abs{I}\geq \delta}\frac{\int_{S(I)} \abs{g'(z)\beta_w^\a(z)}^p(1-\abs{z}^2)^s dA(z)}{\abs{I}^{s-(p(\a+1)-2)}}\lesssim \frac{(1-\abs{w}^2)^{\frac{s-(p-2)-p\a}{2}}}{\delta^{s-(p(\a+1)-2)}}\nm{g}_{\MaD{\gamma}{p}{s}}^p,
	$$
	by hypothesis $g\in M_\gamma(D^p_s)$ and $\frac{s-(p-2)-p\a}{2}>0$, so fixed $\delta>0$ there exists $\delta'>0$ such that
	$$
	\sup_{\abs{I}\geq \delta}\frac{\int_{S(I)} \abs{g'(z)\beta_w^\a(z)}^p(1-\abs{z}^2)^s dA(z)}{\abs{I}^{s-(p(\a+1)-2)}}\leq 1,\quad 1-\abs{w}\leq \delta'.
	$$
\end{proof}
\begin{proposition}\label{propo: construction function general}
	Let $1<p<\infty$, $p-2<s<\infty$ and $0<\a <\frac{s-(p-2)}{p}$. If $g\in \bigcap_{0<\beta<\frac{s-(p-2)}{p}}\MaD{\beta}{p}{s}\setminus \MaD{0}{p}{s}$.
	Then there exists a function $F\in\MaD{\a}{p}{s}$ such that $T_gF\in\MaD{\a}{p}{s}\setminus \maD{\a}{p}{s}$.
\end{proposition}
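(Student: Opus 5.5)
We follow the gliding-hump scheme of Proposition~\ref{lemma: Tg log} (itself modelled on \cite[Proposition 3.5]{ChalDask}), using the test functions $\beta_w^\a$ of Lemma~\ref{lemma: test function log} in place of the logarithms $\beta_w$. The target is a function
$$F=\sum_{k\ge0} a_k\beta^\a_{w_k},\qquad 0\le a_k\le 2^{-k},$$
together with Carleson boxes $S(I_n)$, $|I_n|\to0$, satisfying three properties:
\begin{itemize}
\item[(a)] $\frac{1}{|I_n|^{s-(p(\a+1)-2)}}\int_{S(I_n)}|g'F_n|^p(1-|z|^2)^s\,dA\gtrsim 1$;
\item[(b)] the same lower bound persists for $F$;
\item[(c)] $\sup_n\nm{T_gF_n}_{\MaD{\a}{p}{s}}<\infty$.
\end{itemize}
Given these, $F\in\MaD{\a}{p}{s}$ by Lemma~\ref{lemma: test function log}(i) and summability of the $a_k$. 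Fatou's lemma together with (c) gives $T_gF\in\MaD{\a}{p}{s}$, and (b) gives $T_gF\notin\maD{\a}{p}{s}$.

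\textbf{Step 1: large boxes.} Since $g\notin\MaD{0}{p}{s}$, there are points $w$ with $|w|\to1$ such that
$$\frac{1}{(1-|w|^2)^{s-(p-2)}}\int_{S(w)}|g'|^p(1-|z|^2)^s\,dA$$
is arbitrarily large. By Lemma~\ref{lemma: test function log}(iii), $|\beta^\a_w|^p\gtrsim(1-|w|^2)^{-p\a}$ on $S(w)$. Hence
$$\frac{1}{(1-|w|^2)^{s-(p(\a+1)-2)}}\int_{S(w)}|g'\beta^\a_w|^p(1-|z|^2)^s\,dA\ge M_w,\qquad M_w\to\infty.$$
We will choose the coefficient $a_k\approx \min(2^{-k}, M_{w_k}^{-1/p})$. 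Then the $k$-th hump has mass about $1$ on $I_k=I(w_k)$, but, as the next step shows, it cannot blow up the norm.

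\textbf{Step 2: induction and control of the norm.} Suppose $F_{n-1}$ has been built. It is bounded by Lemma~\ref{lemma: test function log}(ii), and $T_gF_{n-1}\in\MaD{\a}{p}{s}$ because $g\in\MaD{\a}{p}{s}$ and bounded multipliers behave well. The reason is that for small boxes $\frac{1}{|I|^{s-(p(\a+1)-2)}}\int_{S(I)}|g'|^p(1-|z|^2)^s$ is controlled, using $g\in\MaD{\gamma}{p}{s}$ with $\gamma>\a$, by $|I|^{p(\gamma-\a)}\|g\|^p_{\MaD{\gamma}{p}{s}}\to0$. This is exactly where the hypothesis $g\in\bigcap_\beta\MaD{\beta}{p}{s}$ enters.

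Choose $\delta_n$ so small that $\|F_{n-1}\|_\infty^p$ times this quantity is at most $2^{-n}$ on boxes with $|I|<\delta_n$. Then, by Lemma~\ref{lemma: test function log}(iv), choose $w_n$ with $1-|w_n|<\min(\delta_n',\delta_n)$ and with $M_{w_n}$ huge.

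For the new term $a_n\beta^\a_{w_n}$ we estimate its contribution in two regimes:
\begin{itemize}
\item on boxes with $|I|\ge\delta$, it is at most $a_n^p\le 2^{-np}$ by Lemma~\ref{lemma: test function log}(iv);
\item on small boxes, we use the decay $|\beta^\a_{w_n}(z)|\lesssim (1-|w_n|)^\lambda|1-\overline{w_n}z|^{-\a-\lambda}$ away from $S(w_n)$, together with the choice $a_n^p M_{w_n}\lesssim1$.
\end{itemize}
Combined via Minkowski, these give
$$\nm{T_gF_n}\le\max\{\nm{T_gF_{n-1}}+2^{-n}C(g),\,C(g)\},$$
exactly as in property (3) of Proposition~\ref{lemma: Tg log}.

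\textbf{Step 3: persistence of the lower bound.} Property (b) must survive passing from $F_n$ to $F$. Here the test functions are not positive-real, unlike $\Re\beta_w$ for the logarithms. We would therefore multiply each $\beta^\a_{w_k}$ by a unimodular constant making it have positive real part near $S(w_k)$. This is possible because $(1-\overline{w}z)^{-(\a+\lambda)}$ has argument bounded by $(\a+\lambda)\pi/2$; if $\a+\lambda\ge1$, we replace the exponent via a smaller $\lambda$, or restrict to a sub-box where the argument is nearly constant.

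The tails $\sum_{k>n}$ are then made negligible on $S(I_n)$ by requiring, at each later step, $w_k$ so close to the boundary that the $L^\infty$ norm of $\beta^\a_{w_k}$ on $S(I_n)\cap\{|z|<r_n\}$ is tiny, while the remaining part near the boundary is handled by (c).

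\textbf{Expected obstacle.} We expect the main obstacle to be the small-box estimate in Step 2: the uniform control of $\int_{S(I)}|g'a_n\beta^\a_{w_n}|^p$ for boxes $I$ of intermediate size relative to $I(w_n)$. This is the step where the lack of Möbius invariance hurts. We would attack it with the dyadic decomposition $S_k(w)$ of Lemma~\ref{lemma: test function log}, bounding each annulus by $\|g\|_{\MaD{\gamma}{p}{s}}$ with $\gamma$ slightly above $\a$.
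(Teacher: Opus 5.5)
There is a genuine gap, and it sits exactly at the step you flag as the expected obstacle. It comes from calibrating $a_n\approx M_{w_n}^{-1/p}$ on the single box $S(w_n)$ and taking $I_n=I(w_n)$. Property (c) requires $a_n^pQ_n(I)\lesssim 1$ uniformly over \emph{all} arcs with $\abs{I}<\delta_n$, where $Q_n(I)=\abs{I}^{-(s-(p(\a+1)-2))}\int_{S(I)}\abs{\beta^\a_{w_n}g'}^p(1-\abs{z}^2)^s\,dA$. Nothing in your argument relates $Q_n(I)$ to $Q_n(I(w_n))$.

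To see why this matters, write $N_0(I)=\abs{I}^{-(s-(p-2))}\int_{S(I)}\abs{g'}^p(1-\abs{z}^2)^s\,dA$. For a subarc $I\subset I(w_n)$ with $\abs{I}=\e(1-\abs{w_n})$ one has $Q_n(I)\asymp \e^{p\a}N_0(I)$, while $Q_n(I(w_n))\asymp N_0(I(w_n))$. Containment only gives $N_0(I)\le \e^{-(s-(p-2))}N_0(I(w_n))$, with $s-(p-2)>p\a$. Since $g\notin\MaD{0}{p}{s}$, a fine subarc may carry far more $N_0$ than $I(w_n)$, so an arbitrary $w_n$ with $M_{w_n}$ huge can produce $a_n^pQ_n(I)\gg 1$.

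The dyadic decomposition does not repair this. It only bounds $\sup_I Q_n(I)$ by $C_\gamma(1-\abs{w_n})^{-p\gamma}$, which is not comparable to $M_{w_n}$, because $N_0$ may blow up more slowly than any power. Also, smallness on small boxes comes from $g\in\MaD{\gamma}{p}{s}$ with $\gamma<\a$, through the factor $\abs{I}^{p(\a-\gamma)}\to 0$; for $\gamma>\a$ that factor blows up.

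The missing idea is to normalize by the supremum itself:
\begin{itemize}
\item Choose $w_n$ with $1-\abs{w_n}\le\delta_n'$ and $Q_n(I(w_n))\ge 2^{pn}$. This is possible by Lemma~\ref{lemma: test function log}(iii) and $g\notin\MaD{0}{p}{s}$.
\item Put $M_n^p=\sup_{\abs{I}\le\delta_n}Q_n(I)\ge 2^{pn}$. This supremum is attained at some arc $I_n$, because $\beta^\a_{w_n}\in H^\infty$ and $g\in\maD{\a}{p}{s}$ force $Q_n(I)\to 0$ as $\abs{I}\to 0$.
\item Set $a_n=M_n^{-1}$ and let $I_n$ be this extremal arc, which in general is not $I(w_n)$.
\end{itemize}
Then arcs with $\abs{I}<\delta_n$ receive a normalized contribution at most $1$ from the new term, by definition. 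Arcs with $\abs{I}\ge\delta_n$ receive at most $2^{-n}$, by Lemma~\ref{lemma: test function log}(iv). On $I_n$ the new term contributes exactly $1$, against at most $\tfrac12$ from $F_{n-1}$ by the choice of $\delta_n$. Minkowski's inequality then gives (a) and (c).

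Your Step 3 is also unsound as written, and in fact unnecessary. Positivity of $\Re\beta^\a_w$ on all of $\D$ needs $\a+\lambda\le1$, which no $\lambda>0$ achieves when $\a\ge1$; the hypotheses allow this case. A unimodular rotation makes a term positive only on a small sub-box, not on all of $S(I_n)$ for the tail terms $k>n$. And (c) is merely a uniform bound, so it cannot make the part of $S(I_n)$ near $\T$ negligible.

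Instead, require in addition $\delta_n<\abs{I_{n-1}}$ and $\delta_n\le 2^{-n}$. Since $\abs{I_n}\le\delta_n$, the arcs shrink to $0$ and $\abs{I_n}\ge\delta_k$ for every $k>n$. So Lemma~\ref{lemma: test function log}(iv), the very tool of your first bullet, bounds the normalized $L^p(S(I_n))$ norm of each tail term $a_k\beta^\a_{w_k}g'$ by $a_k\le 2^{-k}$. Minkowski then gives the lower bound $\tfrac12-2^{-n}>0$ for $F$ on $S(I_n)$, hence $T_gF\notin\maD{\a}{p}{s}$, with no positivity required.
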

\begin{proof}
	We are going to construct a sequence of arcs ${I_n}$, a sequence of positive numbers $\{\delta_n\}$ and a sequence of functions $F_n$ of the form
	$$
	F_n(z)=\sum_{k=0}^n a_k\beta_{w_k}^\a(z),
	$$
	for some $w_k\in\D$, such that
	\begin{enumerate}
		\item $0\leq a_k \leq 2^{-k}$,
		\item $\abs{I_k}>\delta_n$ if $k\leq n$,
		\item for all $n\in\N$ it holds
		$$
		\frac{1}{\abs{I_n}^{s-(p(\a+1)-2)}}\int_{S(I_n)} \abs{F_n(z)g'(z)}^p(1-\abs{z}^2)^sdA(z)\geq \frac{1}{2},
		$$
		\item for all $n\in \N$ we have $\nm{T_gF_n}_{\MaD{\a}{p}{s}}\leq \max \left ( \nm{T_g F_{n-1}}_{\MaD{\a}{p}{s}}+2^{-n}C(g),C(g)\right )$, where $C(g)$ is a positive constant only depending on $g$.
	\end{enumerate}
	We will do a recursive definition. First, take $a_0=1$, $w_0=1$, $I_0=\T$ and $\delta_0=1$, so $F_0\equiv 1$. For the recursive step, suppose $I_1,\ldots I_{n-1}$, $w_1,\ldots,w_{n-1}$, $\delta_1,\ldots\delta_{n-1}$ and $a_1,\ldots,a_{n-1}$.
	Since $g\in \bigcap_{0<\beta<\frac{s-(p-2)}{p}}\MaD{\beta}{p}{s}\subset\maD{\a}{p}{s}$ and $F_{n-1}\in H^\infty$ by Lemma~\ref{lemma: test function log} (ii), we can find $0<\delta_n<\abs{I_{n-1}}$ such that
	\begin{equation}\label{eq: propo tg log eq1}
		\sup_{\abs{I}\leq \delta_n}\frac{1}{\abs{I_n}^{s-(p(\a+1)-2)}}\int_{S(I)}\abs{F_{n-1}(z)g'(z)}^p(1-\abs{z}^2)^sdA(z)\leq \frac{1}{2^p},
	\end{equation}
	now by Lemma~\ref{lemma: test function log}, there exists $0<\delta_n'<\delta_n$ such that if $1-\abs{w}<\delta_n'$ then
	\begin{equation}\label{eq: propo tg log eq2}
		\sup_{\abs{I}\geq \delta_n}\frac{1}{\abs{I_n}^{s-(p(\a+1)-2)}}\int_{S(I)}\abs{\beta^\a_w(z)g'(z)}^p(1-\abs{z}^2)^sdA(z)\leq 1.
	\end{equation}
	Furthermore, by Lemma~\ref{lemma: test function log} (iii), since $g\notin \MaD{0}{p}{s}$, there exists some $w_n\in\D$ with $1-\abs{w_n}\leq\delta_n'$ such that
	$$
	\frac{1}{\abs{I_{w_n}}^{s-(p(\a+1)-2)}}\int_{S(I_{w_n})}\abs{\beta^\a_{w_n}(z)g'(z)}^p(1-\abs{z}^2)^sdA(z)\geq 2^{pn}.
	$$
	On the other hand, as $g\in\MaD{\a}{p}{s}$, the supremum
	\begin{equation}\label{eq: propo tg log eq3}
	M_n^p:=\sup_{\abs{I}\leq \delta_n}\frac{1}{\abs{I}^{s-(p-2)}}\int_{S(I)}\abs{\beta^\a_{w_n}(z)g'(z)}^p(1-\abs{z}^2)^sdA(z) \geq 2^{pn},
	\end{equation}
	is in fact a maximum which is obtained for some interval $I_n\subset \T$, $\abs{I_n}\leq \delta_n'$. We claim that the function
	$$
	F_n:= F_{n-1} + M_n^{-1}\beta_{w_n}
	$$
	satisfies the required properties. \\
	Clearly by \eqref{eq: propo tg log eq3} $a_n=M_n^{-1}\leq 2^{-n}$, and (2) holds by definition of $\delta_n$. \\
	(3) follows from \eqref{eq: propo tg log eq1} and \eqref{eq: propo tg log eq3}
	\begin{equation*}
		\begin{split}
			&\left (\frac{1}{\abs{I_n}^{s-(p(\a+1)-2)}}\int_{S(I_n)} \abs{F_n(z)g'(z)}^p(1-\abs{z}^2)^sdA(z)\right )^{\frac{1}{p}} 
			\\
			&\geq \left (\frac{M_n^{-p}}{\abs{I_n}^{s-(p(\a+1)-2)}}\int_{S(I_n)} \abs{\beta_{w_n}^\a(z)g'(z)}^p(1-\abs{z}^2)^sdA(z)\right )^{\frac{1}{p}} \\
			&- \left (\frac{1}{\abs{I_n}^{s-(p(\a+1)-2)}}\int_{S(I_n)} \abs{F_{n-1}(z)g'(z)}^p(1-\abs{z}^2)^sdA(z)\right )^{\frac{1}{p}} \geq 1-\frac{1}{2}=\frac{1}{2}.
		\end{split}
	\end{equation*}
	Finally, to obtain (4), let $I\subset \T$, we consider two different cases.
	First, if $\abs{I}\geq \delta_n$, then by \eqref{eq: propo tg log eq2}
	\begin{equation*}
		\begin{split}
			&\left (\frac{1}{\abs{I}^{s-(p(\a+1)-2)}}\int_{S(I)}\abs{g'(z)F_n(z)}^p(1-\abs{z}^2)^s \right)^{\frac{1}{p}} \\
			&\leq M_n^{-1}\left (\frac{1}{\abs{I}^{s-(p(\a+1)-2)}}\int_{S(I)}\abs{g'(z)\beta^\a_{w_n}(z)}^p(1-\abs{z}^2)^s \right)^{\frac{1}{p}} + \nm{T_gF_{n-1}}_{\MaD{\a}{p}{s}} \\
			&\leq 2^{-n}+\nm{T_gF_{n-1}}_{\MaD{\a}{p}{s}},
		\end{split}
	\end{equation*}
	on the other hand, if $\abs{I}<\delta_n$, then by \eqref{eq: propo tg log eq1} and \eqref{eq: propo tg log eq3}
	\begin{equation*}
		\begin{split}
			&\left (\frac{1}{\abs{I}^{s-(p(\a+1)-2)}}\int_{S(I)} \abs{F_n(z)g'(z)}^p(1-\abs{z}^2)^sdA(z)\right )^{\frac{1}{p}} 
			\\ &\leq \left (\frac{M_n^{-p}}{\abs{I}^{s-(p(\a+1)-2)}}\int_{S(I)} \abs{\beta^\a_{w_n}(z)g'(z)}^p(1-\abs{z}^2)^sdA(z)\right )^{\frac{1}{p}} \\
			&+ \left (\frac{1}{\abs{I}^{s-(p(\a+1)-2)}}\int_{S(I)} \abs{F_{n-1}(z)g'(z)}^p(1-\abs{z}^2)^sdA(z)\right )^{\frac{1}{p}} \leq 1 + \frac{1}{2} =C(g).
		\end{split}
	\end{equation*}

	Now that we have constructed the sequence, the proof can be finished as follows. First, by Lemma~\ref{lemma: test function log} (i)
	$$
	\sum_{k=0}^\infty a_k \nm{\beta_{w_k}^\a}_{\MaD{\a}{p}{s}} \lesssim \sum_{k=0}^\infty a_k < \infty,
	$$
	then $F:= \sum_{k=0}^{\infty} a_k\beta_{w_k}^\a\in \MaD{\a}{p}{s}$, lets check that $F$ satisfies the desired properties. \\
	By iterating (4), $$\nm{T_gF_n}_{\MaD{\a}{p}{s}}\leq \max \left ( \nm{T_g F_{0}}_{\MaD{\a}{p}{s}}+C(g)\sum_{r=0}^n2^{-r},C(g)\right )\leq \nm{g}_{\MaD{\a}{p}{s}}+C(g).$$
	Now, by Fatou's lemma, if $I\subset \T$
	\begin{equation*}
		\begin{split}
			\frac{1}{\abs{I}^{s-(p(\a+1)-2)}}\int_{S(I)} \abs{g'(z)F(z)}^p(1-\abs{z}^2)^sdA(z) &\leq \liminf_{n\to \infty} \frac{1}{\abs{I}^{s-(p(\a+1)-2)}}\int_{S(I)} \abs{g'(z)F_n(z)}^p(1-\abs{z}^2)^sdA(z) \\
			&\leq \liminf_{n\to \infty}\nm{T_gF_n}_{\MaD{\a}{p}{s}}^p \leq \nm{g}_{\MaD{\a}{p}{s}}+C(g),
		\end{split}
	\end{equation*}
	so $T_gF\in\MaD{\a}{p}{s}$. To check that $T_gF\notin\maD{\a}{p}{s}$, just notice that if $n$ is big, then by (2), (4) and \eqref{eq: propo tg log eq2}
	\begin{equation*}
		\begin{split}
			&\frac{1}{\abs{I_n}^{s-(p(\a+1)-2)}}\int_{S(I_n)}\abs{F(z)g'(z)}^p(1-\abs{z}^2)^sdA(z) \geq \frac{1}{\abs{I_n}^{s-(p(\a+1)-2)}}\int_{S(I_n)}\abs{F_n(z)g'(z)}^p(1-\abs{z}^2)^sdA(z) \\
			&- \sum_{k=n+1}^\infty a_k^p \frac{1}{\abs{I_n}^{s-(p(\a+1)-2)}}\int_{S(I_n)}\abs{\beta_{w_n}^\a(z)g'(z)}^p(1-\abs{z}^2)^sdA(z) \geq 1-\sum_{k=n+1}^\infty a_k^p >0.
		\end{split}
	\end{equation*}
\end{proof}
Lastly, an application of Proposition~\ref{propo: Tg compact mad} let us push a little further this last Proposition for univalent symbols.
\begin{corollary}\label{coro: construction mad}
	Let $p,s,\a>0$ in the admissible range. If $g\in \U$ such that $g\in \bigcap_{0<\beta<\frac{s-(p-2)}{p}}\MaD{\beta}{p}{s}\setminus \maD{0}{p}{s}$.
	Then there exists a function $F\in\MaD{\a}{p}{s}$ such that $T_gF\in\MaD{\a}{p}{s}\setminus \maD{\a}{p}{s}$.
\end{corollary}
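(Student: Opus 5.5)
The plan is to split according to whether $g$ lies in $\MaD{0}{p}{s}$ or not, and to let Proposition~\ref{propo: Tg compact mad} handle the first case. Proposition~\ref{propo: construction function general} already covers $g\in \bigcap_{0<\beta<\frac{s-(p-2)}{p}}\MaD{\beta}{p}{s}\setminus \MaD{0}{p}{s}$. By hypothesis $g\notin\maD{0}{p}{s}$, so the only remaining case is
$$
g\in \MaD{0}{p}{s}\setminus \maD{0}{p}{s},
$$
with $g\in\U$.

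\emph{Case $g\notin \MaD{0}{p}{s}$.} Here the hypotheses of Proposition~\ref{propo: construction function general} hold exactly, since $0<\a<\frac{s-(p-2)}{p}$ in the admissible range. That proposition produces the required $F$ directly.

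\emph{Case $g\in \MaD{0}{p}{s}\setminus \maD{0}{p}{s}$.}
\begin{itemize}
\item Since $g$ is univalent, Proposition~\ref{propo: Tg bounded Mad univalent}(ii) shows that $T_g$ is bounded on $\MaD{\a}{p}{s}$. Hence $T_gF\in \MaD{\a}{p}{s}$ for every $F\in\MaD{\a}{p}{s}$.
\item Since $g\notin\maD{0}{p}{s}$, the equivalence (ii)$\Leftrightarrow$(iv) in Proposition~\ref{propo: Tg compact mad} gives $T_g(\MaD{\a}{p}{s})\not\subset\maD{\a}{p}{s}$.
\item So there is some $F\in \MaD{\a}{p}{s}$ with $T_gF\in\MaD{\a}{p}{s}\setminus\maD{\a}{p}{s}$, which is the claim.
\end{itemize}

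The main point to check is the chain (iv)$\Rightarrow$(ii) in Proposition~\ref{propo: Tg compact mad}, which this case relies on. It was proved there through (iv)$\Leftrightarrow$(iii) (Gantmacher and $T_g^{\star\star}=T_g$) and then the test functions $f_n=(1-\overline{a_n}z)^{-\a}$. That argument already needs $T_g$ to be bounded, and boundedness is supplied by Proposition~\ref{propo: Tg bounded Mad univalent}. With that in place, the corollary follows from combining the two propositions.
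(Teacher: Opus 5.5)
Your proposal is correct and matches the paper's proof essentially step for step: the same split into $g\notin\MaD{0}{p}{s}$, handled by Proposition~\ref{propo: construction function general}, and $g\in\MaD{0}{p}{s}\setminus\maD{0}{p}{s}$, handled by boundedness from Proposition~\ref{propo: Tg bounded Mad univalent}(ii) together with the failure of (iv) in Proposition~\ref{propo: Tg compact mad}. Your remark that the (iv)$\Rightarrow$(ii) direction there relies on $T_g$ being bounded (supplied by univalence and $g\in\MaD{0}{p}{s}$) is a fair reading of how that proposition is used.
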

\begin{proof}
	We distinguish two cases, on the one hand, if $g\in \bigcap_{0<\beta<\frac{s-(p-2)}{p}}\MaD{\beta}{p}{s}\setminus \MaD{0}{p}{s}$ the result follows from Proposition~\ref{propo: construction function general}. On the other hand, if $g\in \MaD{0}{p}{s}\setminus\maD{0}{p}{s}$ by Proposition~\ref{propo: Tg bounded Mad univalent} (ii), $T_{g}(\MaD{\a}{p}{s})\subset \MaD{\a}{p}{s}$ and by Proposition~\ref{propo: Tg compact mad} $T_{g}(\MaD{\a}{p}{s})\nsubseteq \maD{\a}{p}{s}$, so there exists $F\in \MaD{\a}{p}{s}$ such that $T_gF\in\MaD{\a}{p}{s}\setminus \maD{\a}{p}{s}$.
\end{proof}
\section{Main results on semigroups}
\subsection{First properties of $[\phi_t,M_\a(D^p_s)]$}
Our main objective is to study the maximal subspace of strong continuity for $\MaD{\a}{p}{s}$ in the admissible range of parameters. \\
\begin{lemma}\label{lemma: contains good space}
	Let $p,s,\a$ in the admissible range. Then the following holds:
	\begin{enumerate}
		\item[(i)] If $s>p-1$ then $H^\infty\subset \MaD{\a}{p}{s}$;
		\item[(ii)] If $2\leq p <\infty$ and $p-2\leq s \leq p-1$, then $B^p\subset \MaD{\a}{p}{s}$;
		\item[(iii)] If $1< p <2$ and $0\leq s \leq p-1$, then $\mathcal{D}\subset \MaD{\a}{p}{s}$.
	\end{enumerate}
\end{lemma}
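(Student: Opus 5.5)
The plan is to show in each case that the given space embeds into $D^p_s$. Then we use the trivial inclusion $D^p_s\subset \MaD{\a}{p}{s}$, valid whenever $\a\geq 0$ and $s\geq p-2$. To see this inclusion, recall the change of variables in the proof of Proposition~\ref{prop: classif Mad}:
$$
(1-\abs{a}^2)^{p\a}\nm{f\circ\phi_a-f(a)}_{D^p_s}^p=\int_\D \abs{f'(w)}^p(1-\abs{w}^2)^s\frac{(1-\abs{a}^2)^{s-(p-2)+p\a}}{\abs{1-\overline{a}w}^{2(s-(p-2))}}\,dA(w).
$$
Since $\abs{1-\overline{a}w}\geq 1-\abs{a}$, the kernel is bounded by $4^{s-(p-2)}(1-\abs{a}^2)^{p\a}\leq 4^{s-(p-2)}$. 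Hence $\rho_{\a,p,s}(f)\lesssim \nm{f}_{D^p_s}$, and the inclusion $D^p_s\subset\MaD{\a}{p}{s}$ holds. It therefore suffices to check the three embeddings $H^\infty\subset D^p_s$, $B^p\subset D^p_s$ and $\mathcal{D}\subset D^p_s$ in the respective ranges.

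For (i), when $s>p-1$ we have $D^p_s=A^p_{s-p}$ with equivalent norms, as recalled in the proof of Proposition~\ref{prop: compo dirichlet}. Since $s-p>-1$, the weight $(1-\abs{z}^2)^{s-p}$ is integrable on $\D$. So for $f\in H^\infty$ we get $\nm{f}_{A^p_{s-p}}^p\leq \nm{f}_\infty^p\int_\D(1-\abs{z}^2)^{s-p}dA(z)<\infty$, which gives $H^\infty\subset D^p_s$.

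For (ii), recall that $B^p=D^p_{p-2}$ and that $(1-\abs{z}^2)^s\leq (1-\abs{z}^2)^{p-2}$ when $s\geq p-2$. This gives $\nm{f'}_{A^p_s}\leq \nm{f'}_{A^p_{p-2}}$, so $B^p\subset D^p_s$. The point $s=p-2$ is included, although only $s>p-2$ is needed for the admissible range.

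For (iii), with $1<p<2$ and $0\leq s\leq p-1$, we apply Hölder's inequality with exponents $2/p$ and $2/(2-p)$:
$$
\int_\D\abs{f'}^p(1-\abs{z}^2)^s dA\leq \Big(\int_\D\abs{f'}^2dA\Big)^{p/2}\Big(\int_\D(1-\abs{z}^2)^{\frac{2s}{2-p}}dA\Big)^{\frac{2-p}{2}}.
$$
The last integral is finite because $2s/(2-p)\geq 0>-1$. Hence $\nm{f}_{D^p_s}\lesssim\nm{f}_{\mathcal{D}}$, so $\mathcal{D}\subset D^p_s$.

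There is no real obstacle here. The only point to watch is the first step, namely that $D^p_s\subset\MaD{\a}{p}{s}$ for every $\a\geq0$, and in particular also when $\a<\frac{s-(p-2)}{p}$. This is handled by the displayed kernel bound, which uses only $s-(p-2)\geq 0$ and $\a\geq0$.
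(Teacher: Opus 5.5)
\textbf{There is a genuine gap.} Your reduction fails at its first step. The inclusion $D^p_s\subset \MaD{\a}{p}{s}$ is false throughout the admissible range.

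The error is in the kernel estimate. Write $\sigma=s-(p-2)$. The bound $\abs{1-\overline{a}w}\geq 1-\abs{a}\geq \frac12(1-\abs{a}^2)$ only gives
\[
\frac{(1-\abs{a}^2)^{\sigma+p\a}}{\abs{1-\overline{a}w}^{2\sigma}}\leq 4^{\sigma}(1-\abs{a}^2)^{p\a-\sigma},
\]
not $4^{\sigma}(1-\abs{a}^2)^{p\a}$. At $w=a$ the kernel really is of size $(1-\abs{a}^2)^{p\a-\sigma}$. This is bounded exactly when $\a\geq\sigma/p$, which is the case of Proposition~\ref{prop: classif Mad}(i), i.e.\ the complement of the admissible range.

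The inclusion cannot hold for structural reasons either. By definition $\MaD{\a}{p}{s}\subseteq D^p_s$ (take $a=0$). For $\a<\sigma/p$, Corollary~\ref{coro: density polynomials} says $\MaD{\a}{p}{s}$ is non-separable, while $D^p_s$ is separable because polynomials are dense. A concrete instance is $H^2=D^2_1\not\subset BMOA=M_0(D^2_1)$. So your three embeddings $H^\infty\subset D^p_s$, $B^p\subset D^p_s$ and $\mathcal{D}\subset D^p_s$ are correct, but they land in a strictly larger space and do not give the lemma.

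\textbf{The missing idea is M\"obius invariance of the source spaces.} Apply each of your three estimates to $f\circ\phi_a-f(a)$ instead of $f$, and bound the right-hand side uniformly in $a$:
\begin{itemize}
\item[(i)] Use $\nm{f\circ\phi_a-f(a)}_{H^\infty}\leq 2\nm{f}_{H^\infty}$.
\item[(ii)] The Besov seminorm is M\"obius invariant, so $\nm{f\circ\phi_a-f(a)}_{D^p_s}\leq \nm{f\circ\phi_a-f(a)}_{B^p}=\nm{f-f(0)}_{B^p}$.
\item[(iii)] The Dirichlet integral is conformally invariant, $\int_\D\abs{(f\circ\phi_a)'}^2dA=\int_\D\abs{f'}^2dA$, so your H\"older bound holds uniformly in $a$.
\end{itemize}
This gives $f\in\MaD{0}{p}{s}$. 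Then $\MaD{0}{p}{s}\subset\MaD{\a}{p}{s}$, since $(1-\abs{a}^2)^{\a}\leq 1$.

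This is exactly the paper's proof. In (i) the paper uses $\abs{f'(z)}\lesssim\nm{f}_{H^\infty}/(1-\abs{z}^2)$ directly, and in (iii) it applies H\"older to the kernel form of the norm, but the mechanism is the same.
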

\begin{proof}
	(i) Let $f\in H^\infty$ and $a\in\D$, by applying that $\abs{f'(z)}\lesssim \frac{\nm{f}_{H^\infty}}{1-\abs{z}^2}$, $z\in\D$ then
	\begin{equation*}
		\begin{split}
			\nm{f\circ\phi_a - f(a)}_{D^p_s}^p&\lesssim \nm{f}_{H^\infty}^p\int_{\D} \left (\frac{\abs{\phi_a'(z)}}{1-\abs{\phi_a(z)}^2}\right )^p (1-\abs{z}^2)^s dA(z)\\
			&= \nm{f}_{H^\infty}^p\int_\D (1-\abs{z}^2)^{s-p}dA(z)\lesssim \nm{f}_{H^\infty}^p,
		\end{split}
	\end{equation*}
	so $f\in\MaD{0}{p}{s}\subset\MaD{\a}{p}{s}$.\\
	(ii) By applying that Besov spaces are Mobiüs invariant and $s\geq p-2$, if $f\in B^p$ and $a\in\D$ then
	$$
	\nm{f\circ\phi_a - f(a)}_{D^p_s}^p \leq \nm{f\circ\phi_a - f(a)}_{B^p}^p = \nm{f-f(0)}_{B^p}^p \leq \nm{f}_{B^p}^p,
	$$
	so $f\in\MaD{0}{p}{s}\subset\MaD{\a}{p}{s}$. \\
	(iii) Let $f\in\mathcal{D}$, in this case, we are going to use that, by Proposition~\ref{prop: classif Mad} is it enough to prove that 
	$$
	\sup_{a\in\D}\int_\D \abs{f'(z)}^p\frac{(1-\abs{z}^2)^{s}(1-\abs{a}^2)^{s-(p-2)}}{\abs{1-\overline{a}z}^{2(s-(p-2))}}dA(z)<\infty.
	$$
	Let $a\in\D$ by Hölder's inequality and \cite[Lemma 3.10]{zhu}
	\begin{equation*}
		\begin{split}
			&\int_\D \abs{f'(z)}^p\frac{(1-\abs{z}^2)^{s}(1-\abs{a}^2)^{s-(p-2)}}{\abs{1-\overline{a}z}^{2(s-(p-2))}}dA(z)\\
			&\leq (1-\abs{a}^2)^{s-(p-2)} \left (\int_\D \abs{f'(z)}^2 dA(z) \right )^{\frac{p}{2}}\left (\int_\D \frac{(1-\abs{z}^2)^{\frac{2s}{2-p}}}{\abs{1-\overline{a}z}^{4+\frac{4s}{2-p}}}dA(z) \right)^{\frac{2-p}{2}} \\
			&\asymp  \nm{f}_{\mathcal{D}}^p (1-\abs{a}^2)^{s-(p-2)}\left (\frac{1}{(1-\abs{a}^2)^{2+\frac{2s}{2-p}}} \right )^{\frac{2-p}{2}}= \nm{f}_{\mathcal{D}}^p,
		\end{split}
	\end{equation*}
	so $f\in\MaD{0}{p}{s}\subset\MaD{\a}{p}{s}$.
\end{proof}
\begin{proposition}\label{propo: small contained in cont}
	Let $\a,p,s$ in the admissible range and $(\phi_t)$ a semigroup on $\D$, then:
	\begin{enumerate}
		\item[(i)] If $\a\geq \frac{s-(p-2)}{p}$, then 
		$$
		[\phi_t,D^p_s]=[\phi_t,\MaD{\a}{p}{s}] = \MaD{\a}{p}{s} =D^p_s;
		$$
		\item[(ii)] If $s>p-2$ and $0\leq \a < \frac{s-(p-2)}{p}$, then
		$$
		\maD{\a}{p}{s}\subset [\phi_t,\MaD{\a}{p}{s}].
		$$
	\end{enumerate}
\end{proposition}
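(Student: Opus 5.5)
The argument is the standard density one: establish strong continuity on a dense subspace, control $\nm{C_t}$ uniformly for small $t$, and conclude by an $\e/3$ argument. Two preliminary facts are needed. First, for $t$ small, $\abs{\phi_t(0)}\le 1/2$, since $\phi_t(0)\to 0$ as $t\to0^+$. Hence, by Theorem~\ref{thm:1.1} in case (ii), and by Proposition~\ref{prop: compo dirichlet} in case (i), we get $\limsup_{t\to0^+}\nm{C_t}<\infty$ on the relevant space. Here we use that each $\phi_t\in\U$, which is required for the cases $s\le p-1$. Second, by \eqref{eq: maximal subs 1}, $[\phi_t,X]$ is a closed subspace containing the constants. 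So it suffices to show that the polynomials lie in $[\phi_t,X]$, that is, $\nm{\phi_t^n-e_n}_X\to0$ for every $n$.

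\textbf{Case (i).} By Proposition~\ref{prop: classif Mad}(i) we have $\MaD{\a}{p}{s}=D^p_s$ with equivalent norms, so the two maximal subspaces coincide. By Corollary~\ref{coro: density polynomials}(i) the polynomials are dense. It therefore remains to show $\nm{\phi_t^n-z^n}_{D^p_s}\to0$. For $s>p-1$ this space is the weighted Bergman space $A^p_{s-p}$, and convergence follows from bounded convergence, because $\phi_t^n\to z^n$ pointwise and $\abs{\phi_t^n}\le1$. For $s\le p-1$ the space $D^p_s$ contains $H^2$-type or Dirichlet-type norms. Here I would use Lemma~\ref{lemma: contains good space}, which gives $D^p_s\supset B^p$ or $D^p_s\supset\mathcal{D}$ continuously (its proof bounds the $D^p_s$-norm itself, i.e.\ the $a=0$ term). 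I would then invoke the known strong continuity of composition semigroups on the Dirichlet space (Siskakis) and on the Besov spaces $B^p$, which gives $\nm{\phi_t^n-z^n}_{\mathcal{D}}\to0$ or $\nm{\phi_t^n-z^n}_{B^p}\to 0$. Alternatively, one can argue directly: $(\phi_t^n)'\to nz^{n-1}$ locally uniformly, and the tails $\int_{\abs{z}>r}\abs{(\phi_t^n)'}^p(1-\abs{z}^2)^s\,dA(z)$ are uniformly small. This uniformity comes from the univalent change of variables used in Proposition~\ref{prop: compo dirichlet}, which bounds the tail by the area of $\phi_t(\{\abs{z}>r\})$ weighted by $(1-\abs{w}^2)^{s}$, and that area tends to zero uniformly.

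\textbf{Case (ii).} By Corollary~\ref{coro: density polynomials}(ii), $\maD{\a}{p}{s}$ is the closure of the polynomials in $\MaD{\a}{p}{s}$. Since $[\phi_t,\MaD{\a}{p}{s}]$ is closed, it suffices to show $\nm{\phi_t^n-e_n}_{\MaD{\a}{p}{s}}\to0$. I would do this via the embeddings of Lemma~\ref{lemma: contains good space}: $H^\infty$, $B^p$ or $\mathcal{D}$ embeds continuously into $\MaD{\a}{p}{s}$, and the proofs there give norm bounds. Then the problem reduces to $\phi_t^n\to z^n$ in the corresponding small space.
\begin{itemize}
\item For $B^p$ (with $p\ge2$) and for $\mathcal{D}$, this is the known strong continuity of composition semigroups on these separable spaces, where polynomials are dense. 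These are classical (Siskakis for $\mathcal{D}$; for $B^p$ it follows from the same argument).
\item For the case $s>p-1$, convergence in $H^\infty$ fails, so the argument must change. I would instead use Lemma~\ref{lemma: contains Qp}: $Q_{q,0}\subset\maD{0}{p}{s}\subset\maD{\a}{p}{s}$. I would then combine this with the known strong continuity on $\B_0$ or on $Q_{q,0}$, in the spirit of \cite{BlascoEtal} for $\B_0$. Since $s>p-1$ gives $\MaD{0}{p}{s}=\B$ and $\maD{0}{p}{s}=\B_0$, the result of \cite{BlascoEtal} on $\B_0$ applies directly, with the embedding $\B_0\hookrightarrow\maD{\a}{p}{s}$ being continuous.
\end{itemize}

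The main obstacle is ensuring that every embedding used is \emph{continuous} (norm-bounded), so that convergence in the smaller space transfers to convergence in $\MaD{\a}{p}{s}$. A second point requiring care is matching each parameter region to a space on which strong continuity of composition semigroups is already known. The $B^p$ case with $p-2<s$ is the one where I would double-check that the cited literature covers it. Failing that, I would run the direct tail-estimate argument using univalence of $\phi_t$.
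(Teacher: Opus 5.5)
Your argument is correct and follows the paper's proof. The common scheme is density of polynomials, the bound on $\nm{C_t}$ for small $t$ from Theorem~\ref{thm:1.1} or Proposition~\ref{prop: compo dirichlet}, and transfer of $\nm{P\circ\phi_t-P}\to 0$ from $B^p$ or $\mathcal{D}$, which embed continuously by Lemma~\ref{lemma: contains good space} (the paper cites \cite{AndJovSmithBp} for $B^p$).

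The only deviation is the range $s>p-1$. There you use dominated convergence in $A^p_{s-p}$ for (i), and for (ii) the continuous embedding $\B_0\hookrightarrow\maD{\a}{p}{s}$ together with strong continuity on $\B_0$ from \cite{BlascoEtal}. Both substitutes are valid.

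However, your stated reason for abandoning $H^\infty$ is wrong, and the $H^\infty$ route is exactly the one the paper uses. By \cite{Gumenyuk}, $\phi_t\to z$ uniformly on $\D$, so
\[
\nm{\phi_t^n-z^n}_{H^\infty}\le n\sup_{z\in\D}\abs{\phi_t(z)-z}\to 0 .
\]
Strong continuity fails on $H^\infty$ as a whole, not on the polynomials, which are all that is needed here.
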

\begin{proof}
	Let $X_0$ be the closure of polynomials in $\MaD{\a}{p}{s}$, if we prove that $X_0\subset [\phi_t,\MaD{\a}{p}{s}]$ the result follows. \\
	Let $f\in X_0$ and $\{P_n\}_{n\in\N}$ a sequence of polynomials such that $\lim_{n\to \infty}\nm{f-P_n}_{\MaD{\a}{p}{s}}=0$, then for every $t\geq 0$
	\begin{equation*}
		\begin{split}
			\nm{f\circ\phi_t-f}_{\MaD{\a}{p}{s}} &\leq \nm{(f-P_n)\circ\phi_t}_{\MaD{\a}{p}{s}} + \nm{P_n\circ\phi_t-P_n}_{\MaD{\a}{p}{s}} + \nm{P_n-f}_{\MaD{\a}{p}{s}} \\
			&\leq (1+\nm{C_t}_{\MaD{\a}{p}{s}}) \nm{f-P_n}_{\MaD{\a}{p}{s}} + \nm{P_n\circ\phi_t-P_n}_{\MaD{\a}{p}{s}}.
		\end{split}
	\end{equation*}
	By the bounds for the norm of composition operators obtained in Theorem~\ref{thm:1.1}, 
	$$
	\sup_{t\in[0,1]}\nm{C_t}_{\MaD{\a}{p}{s}} <\infty,
	$$
	so by applying that $\lim_{n\to \infty}\nm{f-P_n}_{\MaD{\a}{p}{s}}=0$, the first term goes to zero, so to end the proof we just need to check that for every polynomial $P$
	$$
	\lim_{t\to 0^+}\nm{P\circ \phi_t-P}_{\MaD{\a}{p}{s}}=0.
	$$
	In fact, by Lemma~\ref{lemma: contains good space} if we check that for every polynomial
	$$
	\lim_{t\to 0^+}\nm{P\circ \phi_t-P}_{X}=0
	$$
	for $X=H^\infty$, $B^p$ and $\mathcal{D}$, we can conclude the proof, and this results are already well-known. For $H^\infty$ by \cite[Proposition 3.2]{Gumenyuk}, for $B^p$ with $p\geq 2$ by \cite[Corollary 3.3]{AndJovSmithBp} and for $\mathcal{D}$ by \cite[Theorem 1]{SiskakisDirichlet}, so the result holds.
\end{proof}
In particular, we obtained the following generalization of \cite{SiskakisDirichlet}.
\begin{corollary}
Let $p, s$ in the admissible range, then every semigroup $(\phi_t)$ on $\D$ induces an strongly continuous semigroup of composition operators on the weighted Dirichlet space $D^p_s$.
\end{corollary}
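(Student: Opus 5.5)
The corollary follows by combining Proposition~\ref{propo: small contained in cont} (i) with Corollary~\ref{coro: density polynomials} (i), once we check that the relevant $M_\alpha(D^p_s)$ coincides with $D^p_s$ for the given $p,s$.

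First I fix $p,s$ in the admissible range and choose any $\alpha \geq \frac{s-(p-2)}{p}$, for instance $\alpha=\frac{s-(p-2)}{p}$. By Proposition~\ref{prop: classif Mad} (i), $M_\alpha(D^p_s)=D^p_s$ with equivalent norms. Here $s\geq p-2$ holds throughout the admissible range (when $1<p<2$ we have $s\geq 0>p-2$), so the proposition applies. The same norm equivalence lets me transfer the bounds of Proposition~\ref{prop: compo dirichlet}. These give
\[
\sup_{t\in[0,1]}\|C_t\|_{D^p_s}\lesssim \sup_{t\in[0,1]}\left(\frac{1}{1-|\varphi_t(0)|}\right)^{\frac{s-(p-2)}{p}}<\infty,
\]
since each $\varphi_t$ is univalent and $t\mapsto \varphi_t(0)$ is continuous, so $\{\varphi_t(0): t\in[0,1]\}$ is a compact subset of $\D$. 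This is exactly the uniform bound used in the proof of Proposition~\ref{propo: small contained in cont}.

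Next I run the density argument of Proposition~\ref{propo: small contained in cont}. By Corollary~\ref{coro: density polynomials} (i), the polynomials are dense in $D^p_s$, so it suffices to show that $\|P\circ\varphi_t-P\|_{D^p_s}\to 0$ for every polynomial $P$. I obtain this through Lemma~\ref{lemma: contains good space}, whose proofs show the embeddings $H^\infty\hookrightarrow D^p_s$ for $s>p-1$, $B^p\hookrightarrow D^p_s$ for $p\geq 2$ and $p-2\leq s\leq p-1$, and $\mathcal{D}\hookrightarrow D^p_s$ for $1<p<2$ and $0\leq s\leq p-1$. These are in fact direct embeddings: $\|f\|_{D^p_s}\leq \|f\|_{B^p}$ when $s\geq p-2$; the H\"older estimate with $a=0$ handles $\mathcal{D}$; and the Cauchy estimate handles $H^\infty$. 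I then combine these with the known strong continuity on polynomials in $H^\infty$, $B^p$ and $\mathcal{D}$ (Gumenyuk, Anderson--Jovovi\'c--Smith, Siskakis).

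Finally, the $\varepsilon/3$ argument from that proof gives $[\varphi_t,D^p_s]=D^p_s$, which is strong continuity on all of $D^p_s$. The only point that needs care is making sure the parameter range matches: the boundary line $s=p-2$ is not in the admissible range when $p\geq 2$, so the logarithmic case never appears and the norm bound above is the only one needed. Otherwise the statement follows immediately from Proposition~\ref{propo: small contained in cont} (i).
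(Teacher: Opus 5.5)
Your proposal is correct and follows the paper's own route. The corollary is read off from Proposition~\ref{propo: small contained in cont}(i), whose proof is exactly your density argument: the uniform bound $\sup_{t\in[0,1]}\|C_t\|<\infty$ from Proposition~\ref{prop: compo dirichlet}, the embeddings of Lemma~\ref{lemma: contains good space}, and the known strong continuity on polynomials in $H^\infty$, $B^p$ and $\mathcal{D}$, with your remarks on the compactness of $\{\varphi_t(0):t\in[0,1]\}$ and on $s>p-2$ making explicit what that proof leaves implicit.
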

The last part of this section is devoted to checking that, whenever $\MaD{\a}{p}{s}$ is non-separable then  $[\phi_t,\MaD{\a}{p}{s}]\subsetneq\MaD{\a}{p}{s}$ for every semigroup $(\phi_t)$ on $\D$. In this case we will work by separated ways the case $\a=0$ and $\a>0$. \\
The following lemma follows from \cite[Remark 1]{DaskGal}(see also \cite[Theorem 1.1]{AndJovSmithHinf}).
\begin{letterlemma}\label{lemmalet: general anderson}
	Let $X\subset \H(\D)$ be a Banach space and $(\phi_t)$ a semigroup on $\D$. Suppose that  $\bigcap\limits_{s\in(0,\infty)}Q_s\subset X \subset \B$, then $[\phi_t,X]\subsetneq X$.
\end{letterlemma}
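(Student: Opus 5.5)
The plan is to reduce the statement to a lower bound in the Bloch norm, and then to produce a single univalent Bloch function, which lies in $\bigcap_{s>0}Q_s$, that is moved a definite Bloch distance by $\phi_t$ for arbitrarily small $t$.

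\emph{Step 1: reduction to the Bloch space.} Both $X$ and $\B$ embed continuously in $\H(\D)$, and $X\subset\B$. By the closed graph theorem the inclusion $X\hookrightarrow\B$ is bounded, so $\nm{h}_{\B}\lesssim\nm{h}_X$ for every $h\in X$. It therefore suffices to find $f\in\bigcap_{s>0}Q_s\subset X$ with
$$\limsup_{t\to0^+}\nm{f\circ\phi_t-f}_{\B}>0 .$$
Since $f\circ\phi_t-f\in X$, this shows $f\notin[\phi_t,X]$. For the lower bound I will only use the pointwise estimate $\nm{h}_{\B}\geq(1-\abs{z}^2)\abs{h'(z)}$ for every $z\in\D$. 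By \cite[Theorem 6.1]{Aluaskari1997} every univalent Bloch function lies in every $Q_s$. So it is enough to construct $f\in\U\cap\B$ together with points $z_n$, $\abs{z_n}\to1$, and times $t_n\to0$ such that
$$(1-\abs{z_n}^2)\abs{f'(\phi_{t_n}(z_n))\phi_{t_n}'(z_n)-f'(z_n)}\geq c>0 .$$

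\emph{Step 2: the dynamics near the boundary.} Use the Berkson--Porta representation $G(z)=(\overline{\tau}z-1)(z-\tau)p(z)$ with $\Re p\geq0$ and $p\not\equiv0$. Since $p$ has nonzero nontangential limits at almost every $\zeta\in\T$, I will fix such a $\zeta\neq\tau$. Along the radius to $\zeta$ the flow then moves a point $z$ by a hyperbolic amount of order $t\abs{G(z)}/(1-\abs{z})$, which is unbounded as $z\to\zeta$. Hence I can choose $z_n=r_n\zeta$ and $t_n\to0$ with $t_n\abs{G(z_n)}/(1-\abs{z_n})\asymp1$. Then $\phi_{t_n}(z_n)$ stays at hyperbolic distance comparable to a fixed constant $d_0>0$ from $z_n$, and the displacement is essentially in a fixed direction (up to a rotation determined by $p(\zeta)$). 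Moreover $\phi_{t_n}'(z_n)$ stays bounded and bounded away from $0$ on this scale, by distortion for the univalent maps $\phi_t$ and the identity $G\circ\phi_t=G\,\phi_t'$.

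\emph{Step 3: the witness $f$; this is the main obstacle.} I want $f\in\U\cap\B$ whose normalized derivative $(1-\abs{z}^2)f'(z)$ oscillates at hyperbolic scale $d_0$ along the sequence $z_n$. It should be of size about $1$ at $z_n$ and much smaller at $\phi_{t_n}(z_n)$, after passing to a sparse subsequence. My first attempt is to take $f$ to be a Riemann map onto a domain made of a half-strip or channel with a sparse sequence of side chambers attached, so that $f$ is Bloch (the image contains no large discs beyond a fixed radius) and not little Bloch. The chambers are to be placed so that the preimage of the $n$-th chamber mouth is near $z_n$ while $\phi_{t_n}(z_n)$ corresponds to a point deep in the channel. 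Koebe distortion then converts the geometry into the required comparison of $(1-\abs{z}^2)\abs{f'}$ at the two points. Locating the preimages precisely requires harmonic-measure or extremal-length estimates, and I expect this to be the delicate part. If that localization proves unmanageable, my fallback is to drop univalence. I would build $f=\sum_n c_n\,h_n$ from uniformly bounded (in every $Q_s$) test functions $h_n$ peaked at $z_n$, such as Lemma~\ref{lemma: test funct a>0}(i) type logarithms composed with automorphisms, arranged with very sparse $z_n$ so that cross terms are negligible. I would then verify membership in every $Q_s$ directly via Carleson-box estimates.
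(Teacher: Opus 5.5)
\paragraph{Where the proof breaks.} Your Steps 1 and 2 work as a reduction:
\begin{itemize}
\item the closed graph theorem gives $\nm{h}_{\B}\lesssim\nm{h}_X$ on $X$;
\item every univalent Bloch function lies in $\bigcap_{s>0}Q_s\subset X$;
\item at a point $\zeta\neq\tau$ where $p$ has a finite nonzero nontangential limit, take $z_n=r_n\zeta$ and $t_n=c(1-r_n)/\abs{G(\zeta)}$ with $c>0$ small and fixed. This gives $\phi_{t_n}(z_n)=z_n+c(1-r_n)e^{i\theta}+o(1-r_n)$ and $\phi_{t_n}'(z_n)=G(\phi_{t_n}(z_n))/G(z_n)\to1$, where $e^{i\theta}=G(\zeta)/\abs{G(\zeta)}$.
\end{itemize}
The gap is Step 3, which carries the entire content of the lemma and is not proved. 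The Riemann map onto a channel with side chambers depends on a preimage localization that you explicitly leave open. In the fallback $\sum_n c_nh_n$, the coefficients cannot tend to $0$, because each $z_n$ must see a fixed lower bound. So uniform bounds on $\nm{h_n}_{Q_s}$ give no control of the sum, and showing such a series lies in every $Q_s$ is exactly the difficulty; you do not address it. As written, no function outside $[\phi_t,X]$ is exhibited.

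\paragraph{The missing idea.} No construction is needed. You only need a definite \emph{difference} between the two normalized derivatives, not a drop in size, and the simplest univalent Bloch function already supplies it. Take $f(z)=\log\frac{1}{1-\overline{\zeta}z}$ with the same $\zeta$. Since $\overline{\zeta}G(\zeta)=-\abs{1-\tau\overline{\zeta}}^2p(\zeta)$ has nonpositive real part, you get $1-\overline{\zeta}\phi_{t_n}(z_n)=(1-r_n)\bigl(1-c\,\overline{\zeta}e^{i\theta}+o(1)\bigr)$ with $1\leq\abs{1-c\,\overline{\zeta}e^{i\theta}}\leq1+c$. Therefore
\[
(1-\abs{z_n}^2)\abs{f'(\phi_{t_n}(z_n))\phi_{t_n}'(z_n)-f'(z_n)}\longrightarrow\frac{2c}{\abs{1-c\,\overline{\zeta}e^{i\theta}}}\geq\frac{2c}{1+c}>0 .
\]

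\paragraph{A shorter route.} You can even skip Step 2. For all large $n$, $\phi_{1/n}-\mathrm{id}$ is a nonzero bounded analytic function, so its radial limits vanish only on a null set. Choose $\zeta$ at which every $\phi_{1/n}$ has a radial limit different from $\zeta$. By Lindel\"of's theorem and Cauchy estimates, $(1-r)\abs{\phi_{1/n}'(r\zeta)}\to0$, so
\[
(1-r^2)\abs{(f\circ\phi_{1/n})'(r\zeta)}=\frac{(1-r^2)\abs{\phi_{1/n}'(r\zeta)}}{\abs{1-\overline{\zeta}\phi_{1/n}(r\zeta)}}\longrightarrow0,
\]
while $(1-r^2)\abs{f'(r\zeta)}\to2$. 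Hence $\nm{f\circ\phi_{1/n}-f}_{\B}\geq2$ for all large $n$. This explicit test function is the $\alpha=0$ counterpart of the functions $(1-\overline{\zeta}z)^{-\alpha}$ that the paper quotes from the Morrey-space setting. With it, your outline becomes a complete proof.
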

\begin{proposition}\label{prop: proper subs 1}
Let $p, s $ in the admissible range and $(\phi_t)$ a semigroup on $\D$, then
	$$
	[\phi_t,\MaD{0}{p}{s}]\subsetneq\MaD{0}{p}{s}.
	$$
\end{proposition}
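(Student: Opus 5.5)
The plan is to apply Lemma~\ref{lemmalet: general anderson} with $X=\MaD{0}{p}{s}$, which reduces the statement to verifying the two inclusions
$$
\bigcap_{q\in(0,\infty)}Q_q\subset \MaD{0}{p}{s}\subset \B .
$$
By Lemma~\ref{lemmalet: general anderson}, these two inclusions give directly $[\phi_t,\MaD{0}{p}{s}]\subsetneq \MaD{0}{p}{s}$ for every semigroup $(\phi_t)$.

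For the left inclusion, I will invoke Lemma~\ref{lemma: contains Qp}. For $p,s$ in the admissible range it produces some $0<q<\infty$ with $Q_q\subset \MaD{0}{p}{s}$. Hence
$$
\bigcap_{q'\in(0,\infty)}Q_{q'}\subset Q_q\subset \MaD{0}{p}{s}.
$$

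For the right inclusion, I will use the identification in Proposition~\ref{prop: classif Mad}(ii) with $\a=0$, namely $\MaD{0}{p}{s}=F(p,p-2,s-(p-2))$. The admissible range requires $s>p-2$ when $p\ge2$, and $s\ge0>p-2$ when $1<p<2$, so $s-(p-2)>0$ and the proposition applies. As recalled at the beginning of Section~\ref{sec:CompositionOperators} (equivalently, \cite[Corollary 3.6]{zhaosurvey}, already used in Lemma~\ref{lemma: univalent bloch equiv maD}), the space $F(p,q,s)$ embeds in $B^{(q+2)/p}$. With $q=p-2$ the exponent is $(q+2)/p=1$, so this gives $\MaD{0}{p}{s}\subset\B$.

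The only care needed is to check that Lemma~\ref{lemmalet: general anderson} applies to the Banach space $\MaD{0}{p}{s}$ with its norm, and that the semigroup acts boundedly on it so that $[\phi_t,\cdot]$ makes sense. The latter follows from Theorem~\ref{thm:1.1}, since every $\phi_t$ is univalent. I expect no real obstacle; the statement is essentially a corollary of the embeddings already established.
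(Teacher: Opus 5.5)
Your proposal is correct and matches the paper's proof: the paper also obtains $\bigcap_{q}Q_q\subset \MaD{0}{p}{s}\subset\B$ from Lemma~\ref{lemma: contains Qp}, Proposition~\ref{prop: classif Mad} and \cite[Corollary 3.6]{zhaosurvey}, and then concludes with Lemma~\ref{lemmalet: general anderson}. Your extra checks, that $s-(p-2)>0$ throughout the admissible range and that the $\phi_t$ act boundedly by Theorem~\ref{thm:1.1}, are the right ones.
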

\begin{proof}
	Joining Proposition~\ref{prop: classif Mad}, \cite[Corollary 3.6]{zhaosurvey} and Lemma~\ref{lemma: contains Qp}, we obtain that
	$$
	\bigcap\limits_{s\in(0,\infty)}Q_s \subset \MaD{0}{p}{s} \subset \B,
	$$
	and the result follows from Lemma~\ref{lemmalet: general anderson}.
\end{proof}
For the case $\a>0$, we take a different approach. In the context of Morrey spaces (remember $H^{2,\lambda} = \MaD{\frac{1-\lambda}{2}}{2}{1}$), Galanopoulos, Merchan and Siskakis \cite{GalMerchanSiskakis}, proved the following.
\begin{lettertheorem}\cite[Theorem 3.5]{GalMerchanSiskakis} Let $0<\a<\frac{1}{2}$ and let $X$ be a Banach space of analytic functions on $\D$ such that $\MaD{\a}{2}{1}\subset X \subset \B^{\a+1}$. Then for every  semigroup $(\phi_t)$ on $\D$ we have $[\phi_t,X]\subsetneq X$. In particular, $[\phi_t,\MaD{\a}{2}{1}]\subsetneq \MaD{\a}{2}{1}$.
\end{lettertheorem}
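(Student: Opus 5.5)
The plan is to argue by contradiction through a uniform pointwise estimate, in the spirit of the proof of Lemma~\ref{lemmalet: general anderson}. Suppose $[\phi_t,X]=X$. Then for every $f\in X$ we have $\nm{f\circ\phi_t-f}_X\to 0$. The inclusion $X\subset\B^{\a+1}$ is continuous, by the closed graph theorem, since both spaces embed continuously in $\H(\D)$. Hence
$$
\sup_{z\in\D}(1-\abs{z}^2)^{\a+1}\abs{(f\circ\phi_t)'(z)-f'(z)}\to 0 ,\qquad t\to 0^+ .
$$
So it suffices to exhibit one $f\in\MaD{\a}{2}{1}\subset X$ for which this quantity stays bounded below along a sequence $t_n\to 0^+$. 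Integrating along hyperbolic segments, it is equivalent, and more convenient, to get a lower bound on
$$
(1-\abs{z_n}^2)^{\a}\abs{f(\phi_{t_n}(z_n))-f(z_n)}
$$
at points $z_n$ with $\delta(\phi_{t_n}(z_n),z_n)$ bounded above and below.

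The first step is purely about the semigroup. I would show that there are $t_n\to0^+$ and $z_n\to\T$ such that $\delta(\phi_{t_n}(z_n),z_n)\asymp 1$, with the $z_n$ converging to a single boundary point $\zeta$ and approaching it nontangentially, or at least along a controlled region. The tool is the Koenigs map $h$ of Section~\ref{sec:Notation}. In the non-elliptic case $h\circ\phi_t=h+it$. Using the Koebe distortion theorem, the hyperbolic displacement of $z$ under $\phi_t$ is comparable to $t\,\abs{h'(z)}^{-1}(1-\abs{z}^2)^{-1}$, that is, to $t\,(1-\abs{z}^2)^{-1}\abs{G(z)}$. Since $G$ is a nontrivial generator, $(1-\abs{z}^2)/\abs{G(z)}$ cannot blow up arbitrarily everywhere. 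For each small $t$ one then picks $z$ near $\T$ where $\abs{G(z)}\asymp(1-\abs{z})/t$. The elliptic case is analogous, with $h\circ\phi_t=e^{-\lambda t}h$.

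The second step is the test function. I would take a sparse sum
$$
f=\sum_n c_n f_{w_n,\a,\lambda}, \qquad \lambda=\tfrac12-\a ,
$$
with $w_n$ near the $z_n$ and $\abs{c_n}\le 1$. The normalization is $f_{w,\a,\lambda}(z)=(1-\abs{w}^2)^{\lambda}(1-\overline{w}z)^{-\a-\lambda}$. By Lemma~\ref{lemma: test funct a>0}(ii) these functions are uniformly bounded in $\MaD{\a}{2}{1}$. If the $w_n$ are chosen extremely sparse, the Carleson-box norm of the sum is controlled by a geometric tail estimate, exactly as in the gluing argument of Proposition~\ref{propo: construction function general}, so $f\in\MaD{\a}{2}{1}$. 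Near $z_n$ the $n$-th term dominates, the others being negligible by sparseness. For that term,
$$
(1-\abs{w_n}^2)^{\a}\abs{f_{w_n}(\phi_{t_n}(z_n))-f_{w_n}(z_n)}\gtrsim 1 ,
$$
because $f_{w_n}$ has the form $(1-\abs{w_n}^2)^{-\a}\Psi(\phi_{w_n}(z))$ with $\Psi$ nonconstant, and $\phi_{w_n}$ maps the two points to points at hyperbolic distance $\asymp1$ in a fixed compact set. To make this non-degenerate I would adjust the position of $w_n$ and the phase of $c_n$.

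The main obstacle is the last step: guaranteeing that the displacement produced by $\phi_{t_n}$ actually changes the value of $f_{w_n}$ by an amount of the critical order. One must avoid the displacement being tangent to a level curve of $\Psi$, and keep $z_n$ and $\phi_{t_n}(z_n)$ within bounded hyperbolic distance of $w_n$. I would first try to take $w_n=z_n$ and use that $\Psi$ is locally univalent near $0$, so that a displacement of size $\asymp1$ in any direction moves $\Psi$ by $\asymp1$. If that fails, I would instead use a lacunary-series test function adapted to Morrey spaces. Once this is secured, the displayed quantity in the first paragraph stays $\gtrsim 1$ along $t_n$, contradicting $f\in[\phi_t,X]$.
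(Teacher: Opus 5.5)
Your outline is correct, but it takes a genuinely different and much heavier route than the one the paper relies on.

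\textbf{The paper's route.} Quoting Galanopoulos--Merch\'an--Siskakis, the paper uses a single explicit function $f_{\zeta,\a}(z)=(1-\overline{\zeta}z)^{-\a}$, with $\zeta\in\T$ chosen once from the semigroup. It obtains $\liminf_{t\to0}\nm{f_{\zeta,\a}\circ\phi_t-f_{\zeta,\a}}_{\mathcal{B}^{\a+1}}\geq 1$ for all small $t$ at once. The idea is that $f_{\zeta,\a}$ has the critical blow-up at $\zeta$, while $\phi_t$ moves $\zeta$, so $f_{\zeta,\a}\circ\phi_t$ loses it. The closed-graph embedding $X\hookrightarrow\mathcal{B}^{\a+1}$ then finishes. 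No scale selection, Koebe distortion or gluing is needed.

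\textbf{Your route.} For a sequence $t_n\to0$ you pick points where $\phi_{t_n}$ moves by hyperbolic distance $\asymp1$, and glue normalized test functions along them. Finding such points only needs $\abs{G(r\zeta)}/(1-r)\to\infty$ along one radius, which holds because $G\not\equiv0$ has nonzero radial limits a.e. This local mechanism avoids any discussion of boundary fixed points of $(\phi_t)$. The price is several verifications you only sketch:
\begin{itemize}
\item The reduction to $(1-\abs{z}^2)^\a\abs{f(\phi_t(z))-f(z)}$ comes from $\mathcal{B}^{\a+1}\hookrightarrow\mathcal{A}^{-\a}$, i.e.\ integrating $(f\circ\phi_t-f)'$ radially from $0$, which is where $\a>0$ enters. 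It does not come from integrating between $z_n$ and $\phi_{t_n}(z_n)$.
\item Membership of $F=\sum c_nf_{w_n,\a,\lambda}$ with $\abs{c_n}\le 1$ is \emph{not} covered by the gluing in Proposition~\ref{propo: construction function general}, whose coefficients are $\le 2^{-k}$. It follows instead from the pointwise bound $\abs{F'(z)}\lesssim\abs{\zeta-z}^{-\a-1}$, valid for lacunary $w_n$ in a Stolz angle at $\zeta$. One then needs $\mu(S(I))\lesssim\abs{I}^{1-2\a}$ for $d\mu=\abs{\zeta-z}^{-2\a-2}(1-\abs{z}^2)\,dA$, which holds since $2\a<1$.
\item Your ``main obstacle'' is empty: $\Psi_w(u)=(1-\overline{w}u)^{1/2}$ is injective on $\D$. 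Alternatively, take the displacement constant below the uniform univalence radius.
\end{itemize}

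\textbf{The sum is superfluous.} Even in your framework, take $z_n=r_n\zeta$ and $u=\phi_{z_n}(z)$. Then $(1-r_n)^{\a}f_{\zeta,\a}(z)=\bigl(\tfrac{1-r_n\overline{\zeta}u}{1+\overline{\zeta}u}\bigr)^{\a}$. This is injective in $u$ and converges to an injective limit as $r_n\to1$. So $f_{\zeta,\a}$ alone gives the lower bound at every scale, which brings you back to the paper's test function.
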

The proof of this result is based on the following lemma.
\begin{letterlemma}
	Let $0<\a<\frac{1}{2}$, for every semigroup $(\phi_t)$ on $\D$, there is a function $f\in\MaD{\a}{2}{1}$ such that
	$$
	\liminf_{t\to 0}\nm{f\circ \phi_t - f}_{\B^{\a+1}} \geq 1.
	$$
\end{letterlemma}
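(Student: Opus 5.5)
We need a function $f\in\MaD{\a}{2}{1}$ with $\liminf_{t\to 0}\nm{f\circ\phi_t-f}_{\B^{\a+1}}\ge 1$, where $\nm{h}_{\B^{\a+1}}=\sup_z(1-|z|^2)^{\a+1}|h'(z)|$. Recall $\MaD{\a}{2}{1}=H^{2,\lambda}$ with $\lambda=1-2\a$.

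\emph{Case split.} By the Berkson--Porta formula we separate the elliptic case ($\tau\in\D$) from the non-elliptic case ($\tau\in\T$). In either case the generator $G$ has a nontangential boundary behaviour at some point $\zeta\in\T$, and we can find a radius along which $G$ does not vanish too fast. After conjugating by a rotation (and, if needed, an automorphism, which preserves both spaces up to equivalent norms by Theorem~\ref{thm:1.1}), we may assume $\zeta=1$.

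\emph{Test function.} The natural extremal functions are the growth-type functions $f_w(z)=(1-\overline{w}z)^{-\a}$, which lie in $\MaD{\a}{2}{1}$ uniformly by Lemma~\ref{lemma: test funct a>0}. However, a single $f_w$ lies in the small space, so it yields no obstruction. Instead we use a lacunary superposition
\[
f=\sum_k c_k(1-\overline{w_k}z)^{-\a}, \qquad w_k\to 1 \text{ rapidly},
\]
or simply $f(z)=(1-z)^{-\a}$. This $f$ lies in $H^{2,\lambda}$, since it is the standard extremal function of the Morrey space, and $(1-|z|^2)^{\a+1}|f'(z)|\asymp 1$ on radii approaching $1$.

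\emph{Key estimate.} Fix small $t>0$ and choose a point $z_t$ near the boundary, where $|z_t|\to1$ as $t\to0$, such that $\phi_t(z_t)$ is hyperbolically far from $z_t$. Then
\[
(1-|z_t|^2)^{\a+1}|f'(\phi_t(z_t))\phi_t'(z_t)-f'(z_t)|\ge 1.
\]
To arrange this, we use that $\phi_t$ moves points near a non-fixed boundary point (or the boundary Denjoy--Wolff point) by a definite amount relative to $1-|z|$: points with $1-|z|\ll t$ near a boundary point not fixed by $\phi_t$ are mapped by $\phi_t$ well inside the disc. In that regime $f\circ\phi_t$ has bounded derivative there, while $(1-|z_t|^2)^{\a+1}|f'(z_t)|\gtrsim 1$. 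We therefore select the boundary point $\zeta$ where $f$ blows up to be one where $\phi_t(\zeta)\ne\zeta$ for small $t$; such points exist because the semigroup is nontrivial and its boundary fixed points are few. With this choice,
\[
|(f\circ\phi_t)'(z_t)|(1-|z_t|^2)^{\a+1}\to0 \quad\text{while}\quad |f'(z_t)|(1-|z_t|^2)^{\a+1}\gtrsim 1 .
\]
Rescaling $f$ gives the constant $1$.

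\emph{Main obstacle.} The main obstacle is controlling $\phi_t'(z_t)$ and the location of $\phi_t(z_t)$ uniformly as $t\to0$. For this we rely on the continuous extension of $\phi_t$ to the boundary off a small set together with the Julia--Carathéodory inequality. Justifying that $\phi_t(\zeta)$ stays a positive distance $c(t)$ from $\zeta$, with $z_t$ chosen at depth much smaller than $c(t)$, is the delicate step.
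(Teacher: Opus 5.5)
Your test function $f(z)=(1-\overline{\zeta}z)^{-\a}$, with $\zeta\in\T$ chosen according to the semigroup, is the one behind the cited proof, and the lacunary superposition is not needed. However, the proposal stops exactly where the argument has content, and the mechanism you sketch is not correct as stated.

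A boundary point that is not fixed by $\phi_t$ need not be sent into $\D$. For $\phi_t(z)=e^{it}z$, or for a non-elliptic semigroup whose Koenigs domain is a half-plane minus a vertical slit, $\phi_t$ maps arcs of $\T$ into $\T$. There is also no reason for $(f\circ\phi_t)'$ to be bounded near $\zeta$, since $\phi_t'$ need not be. What you need instead is Schwarz--Pick, $(1-\abs{z}^2)\abs{\phi_t'(z)}\le 1-\abs{\phi_t(z)}^2$, which gives
\[
(1-\abs{z}^2)^{\a+1}\abs{(f\circ\phi_t)'(z)}\lesssim (1-\abs{z})^{\a}\,\frac{1-\abs{\phi_t(z)}}{\abs{1-\overline{\zeta}\phi_t(z)}^{\a+1}} .
\]
Combined with $1-\abs{\phi_t(z)}\ge c_t(1-\abs{z})$ (again Schwarz--Pick), the right-hand side tends to $0$ along $z=r\zeta$, $r\to1$, as soon as the hyperbolic distance $d_{\D}(\phi_t(r\zeta),r\zeta)\to\infty$. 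No uniformity in $t$ is needed, since for each fixed $t$ the $\B^{\a+1}$ norm is a supremum over $z$. So the ``main obstacle'' you formulate is not the real one.

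The real gap is the existence of a single $\zeta$ that works for all small $t$ at once. ``Boundary fixed points are few'' gives, for each $t$, a null set $E_t$ of radial fixed points (Privalov's uniqueness theorem). But you need $\zeta\notin\bigcup_{0<t<t_0}E_t$, an uncountable union, and neither Julia--Carath\'eodory nor boundary extension of the individual maps $\phi_t$ addresses this.

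One way to close it uses the Koenigs map $h$, with $h\circ\phi_t=h+it$ or $h\circ\phi_t=e^{-\lambda t}h$.
\begin{enumerate}
\item Take $\zeta$ where the univalent function $h$ has a finite radial limit $h(\zeta)\in\partial h(\D)$; a.e.\ $\zeta$ qualifies.
\item Koebe's one-quarter theorem gives $(1-r^2)\abs{h'(r\zeta)}\le 4\abs{h(r\zeta)-h(\zeta)}\to0$. By the distortion theorem, the oscillation of $h$ on hyperbolic discs of fixed radius centred at $r\zeta$ therefore tends to $0$.
\item In the non-elliptic case $\abs{h(\phi_t(r\zeta))-h(r\zeta)}=t$. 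In the elliptic case it tends to $\abs{e^{-\lambda t}-1}\abs{h(\zeta)}>0$ for $0<t<t_0$, where $e^{-\lambda t}\neq1$ on $(0,t_0)$; note $h(\zeta)\neq0$ because $0=h(\tau)\in h(\D)$. Either way, $d_{\D}(\phi_t(r\zeta),r\zeta)\to\infty$ for every such $t$.
\end{enumerate}
With the Schwarz--Pick estimate above, this yields $\nm{f\circ\phi_t-f}_{\B^{\a+1}}\ge\lim_{r\to1}(1-r^2)^{\a+1}\abs{f'(r\zeta)}=\a2^{\a+1}$ for all $0<t<t_0$. Rescaling $f$ gives the constant $1$.
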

In fact, the functions of this lemma are just test functions 
$$
f_{\zeta,\a}(z)=\frac{1}{(1-\overline{\zeta}z)^{\a}},\quad \zeta\in\T,\, z\in\D, \, 0<\a<\frac{1}{2} .
$$
with a particular choice of $\zeta\in\T$ depending only on the semigroup.\\
Then by Lemma~\ref{lemma: test funct a>0} we can mimick the proof of \cite[Theorem 3.5]{GalMerchanSiskakis} to obtain an equivalent result for our whole range of space.
\begin{proposition}\label{prop: proper subs 2}
	Let $p, s, \alpha>0 $ in the admissible range, and let $X$ be a Banach space of analytic functions on $\D$ such that $\MaD{\a}{p}{s}\subset X \subset \B^{\a+1}$. Then for every  semigroup $(\phi_t)$ on $\D$ we have $[\phi_t,X]\subsetneq X$.
\end{proposition}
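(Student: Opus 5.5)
The plan is to mimic the proof of \cite[Theorem 3.5]{GalMerchanSiskakis}. The idea is to exhibit a single function $f\in\MaD{\a}{p}{s}\subset X$ for which $C_tf$ does not converge to $f$ in $X$. Since $X\subset\B^{\a+1}$ are both Banach spaces of analytic functions continuously embedded in $\H(\D)$, the closed graph theorem gives $\nm{h}_{\B^{\a+1}}\lesssim\nm{h}_X$ for $h\in X$. It then suffices to find $f\in\MaD{\a}{p}{s}$ with
$$
\liminf_{t\to0^+}\nm{f\circ\phi_t-f}_{\B^{\a+1}}>0 .
$$
If this holds, $f\notin[\phi_t,X]$ while $f\in X$, which is the claim.

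For the candidate I take the test functions $f_{\zeta,\a}(z)=(1-\overline{\zeta}z)^{-\a}$ with $\zeta\in\T$. These are the functions $f_{w,\a,\lambda}$ of Lemma~\ref{lemma: test funct a>0}(ii) evaluated at $w=\zeta\in\T$, because then $(1-\abs{w}^2)^\lambda$ should be read as the boundary case. More precisely, Lemma~\ref{lemma: test funct a>0}(ii) gives $\sup_{w\in\overline{\D}}\nm{f_{w,\a,\lambda}}_{\MaD{\a}{p}{s}}<\infty$. If the boundary reading is in doubt, I instead let $w=r\zeta\to\zeta$. Lower semicontinuity of the norm under locally uniform convergence, via Fatou in the Carleson box form \eqref{eq: equiv norm maD}, then gives $f_{\zeta,\a}\in\MaD{\a}{p}{s}$ with norm bounded independently of $\zeta$.

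The remaining task is the $\B^{\a+1}$ estimate, which is exactly the content of the lemma preceding the statement. That lemma is stated for $0<\a<1/2$, but its proof only uses the explicit form of $f_{\zeta,\a}$ and the geometry of the semigroup, so it should extend to every $\a>0$. I would check this by following the argument.
\begin{enumerate}
\item Choose $\zeta\in\T$ according to the semigroup. In the non-elliptic case, take $\zeta$ different from the Denjoy--Wolff point $\tau$, at which $\phi_t$ has a well-behaved boundary extension that moves $\zeta$, so that $\phi_t(\zeta)\neq\zeta$ for small $t>0$. In the elliptic case, a point $\zeta$ where the boundary flow is non-stationary works similarly; if the flow fixes every boundary point, one uses a point where the angular derivative does not equal $1$.
\item Evaluate $(1-\abs{z}^2)^{\a+1}\abs{(f\circ\phi_t)'(z)-f'(z)}$ along the radius $z=r\zeta$ as $r\to1$. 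The term $(1-r^2)^{\a+1}\abs{f'(r\zeta)}=\a(1-r^2)^{\a+1}(1-r)^{-\a-1}$ tends to $\a\,2^{\a+1}$. Meanwhile $(f\circ\phi_t)'(r\zeta)$ stays bounded, or is much smaller, because $\phi_t(r\zeta)$ stays away from $\zeta$.
\end{enumerate}
This yields $\nm{f\circ\phi_t-f}_{\B^{\a+1}}\ge c_\a>0$ for all small $t$.

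The main obstacle is step 1: choosing $\zeta$ uniformly for every semigroup, including elliptic rotations $\phi_t(z)=e^{it}z$ and semigroups whose flow is stationary on large boundary sets. For the rotation case the computation is direct. Near $\zeta$ the function $f\circ\phi_t$ has its singularity at $e^{-it}\zeta$, so $\abs{f'(\phi_t(r\zeta))}$ is comparable to $t^{-\a-1}$, which is negligible against $(1-r)^{-\a-1}$ as $r\to1$. In general I would rely on the Berkson--Porta description of the boundary flow, as in \cite{GalMerchanSiskakis}, and quote their choice of $\zeta$, since their selection does not depend on $\a$.
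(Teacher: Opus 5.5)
Your proposal is correct and follows the paper's own route. The paper's proof simply combines the boundary test functions $(1-\overline{\zeta}z)^{-\a}$, $\zeta\in\T$, from Lemma~\ref{lemma: test funct a>0} with the $\B^{\a+1}$ argument of \cite[Theorem 3.5]{GalMerchanSiskakis}, and your radial computation in step 2 gives the lower bound $\a 2^{\a+1}$ for every $\a>0$, which is exactly the extension beyond $\a<1/2$ that the paper's ``mimic'' tacitly needs. (Use Schwarz--Pick, $(1-r^2)^{\a+1}\abs{\phi_t'(r\zeta)}\le(1-r^2)^{\a}\to0$, rather than boundedness of $(f\circ\phi_t)'$.)

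Two small clean-ups:

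First, the function you need is the $\lambda=0$ one. For $\lambda>0$ and $w\in\T$ one has $f_{w,\a,\lambda}\equiv0$, and your Fatou limit along $w=r\zeta$ would also be $0$. Membership of $(1-\overline{\zeta}z)^{-\a}$ in $\MaD{\a}{p}{s}$ follows from a direct Carleson box estimate using $\a<\frac{s-(p-2)}{p}$.

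Second, no case analysis is needed to choose $\zeta$. For small $t>0$, $\phi_t(z)-z$ is a nonzero $H^\infty$ function and so has nonzero radial limits a.e. Hence some $\zeta$ satisfies $\phi_{1/n}(\zeta)\neq\zeta$ for all large $n$, and non-convergence along $t_n=1/n$ already shows $f\notin[\phi_t,X]$.
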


\begin{Prf}{\em{  Theorem~\ref{thm:1.2}}.}
	The embedding $m_\a(D^p_s)\subset[\phi_t,M_\a(D^p_s)]$ follows from Proposition~\ref{propo: small contained in cont}, on the other hand, $[\phi_t,M_\a(D^p_s)]$ is a proper subspace of $M_\a(D^p_s)$ by Proposition~\ref{prop: proper subs 1} in the case $\a=0$ and by Propostion~\ref{prop: proper subs 2} if $\a>0$.
\end{Prf}

\subsection{Characterize $[\phi_t,\MaD{\a}{p}{s}]=\maD{\a}{p}{s}$} 
Now we deal with the proof of Theorems\ref{thm:1.3} and \ref{thm:1.4}.

\begin{Prf}{\em{  Theorem~\ref{thm:1.3}}.}
	We start by proving the sufficiency of the condition, i.e, (ii) implies (i), let $(\phi_t)$ be an elliptic semigroup, without loss of generality we can assume that the Denjoy-Wolff's point is $\tau=0$, then 
	$$
	\gamma'(z)=\frac{z}{G(z)}=-\frac{1}{p(z)},
	$$
	since $\Re{p}\geq 0$, by Alexander-Noshiro-Warschawski criterion, $\gamma$ is univalent and by hypothesis $G$ satisfies the vanishing logaritmich Bloch condition, thats equivalent to $\gamma\in \B_{\log,0}$. 
	Then by Corollary~\ref{coro: Blog univ equiv Flog} and Theorem~\ref{thlet: tg log caract} (iv), $T_\gamma(M_0(D^p_s))\subset m_0(D^p_s)$, so the result follows from Theorem~\ref{thm:1.2} and \eqref{eq:Blasco}
	$$
	\maD{0}{p}{s}\subset[\phi_t,\MaD{0}{p}{s}]= \overline{(T_\gamma(\MaD{0}{p}{s})\oplus \C)\cap \MaD{0}{p}{s}}=\overline{T_\gamma(\MaD{0}{p}{s})\oplus \C} \subset \maD{0}{p}{s}.
	$$ 
	Lets prove the converse now, assume that $(\phi_t)$ and $[\phi_t,\MaD{0}{p}{s}]=\maD{0}{p}{s}$, we have two different cases, in terms of the classification of the semigroup. 
	
	If $(\phi_t)$ is an elliptic semigroup, arguing as before $\gamma$ is univalent and by Harnack's inequality belongs to $\B$, so by Lemma~\ref{lemma: univalent bloch equiv maD}, we have that $\gamma\in\MaD{0}{p}{s}$, are going to prove that $\gamma\in F_{\log,0}(p,p-2,s-(p-2))$. To prove this, assume that $\gamma\in\MaD{0}{p}{s}\setminus F_{\log,0}(p,p-2,s-(p-2)) $. By Proposition~\ref{lemma: Tg log}, there is a function $F\in\MaD{0}{p}{s}$ such that $T_\gamma F\in\MaD{0}{p}{s}\setminus\maD{0}{p}{s}$
	but by \eqref{eq:Blasco}
	$$
	T_\gamma F\in T_\gamma(\MaD{0}{p}{s})\cap \MaD{0}{p}{s}) \subset [\phi_t,\MaD{0}{p}{s}]= \maD{0}{p}{s},
	$$
	and this is a contradiction. So $\gamma\in F_{\log,0}(p,p-2,s-(p-2))$ and by Corollary~\ref{coro: Blog univ equiv Flog} thats equivalent to $\gamma\in \B_{\log,0}$, i.e, $G$ satisfies the vanishing logaritmich Bloch condition.\\
	
	It remains to prove that non-elliptic semigroups cannot satisfy $[\phi_t,\MaD{0}{p}{s}]=\maD{0}{p}{s}$, by contradiction assume that $(\phi_t)$ is non-elliptic and $[\phi_t,\MaD{0}{p}{s}]=\maD{0}{p}{s}$, without loss of generality assume that the Denjoy-Wolff's point is $\tau=1$. Let $h$ be the Koening function and $H(z)=\log (h(z)-w_0)$ for $w_0\in \C\setminus \overline{h(\D)}$, then by Koebe's distortion theorem $H\in \B$, furthermore, as $h(\D)$ is starlike at infinity and $h-w_0$ is an univalent non-zero function, $H$ is also univalent so by Lemma~\ref{lemma: univalent bloch equiv maD}  $H\in\MaD{0}{p}{s}$. We distinguish now two different cases.
	
	If $H\in \MaD{0}{p}{s}\setminus F_{\log,0}(p,p-2,s-(p-2))$ by Proposition~\ref{lemma: Tg log}, there is a function $F\in\MaD{0}{p}{s}$, such that $T_{H}F \in \MaD{0}{p}{s}\setminus\maD{0}{p}{s}$, notice that this is equivalent to $T_h(\frac{F}{h-w_0}) \in \MaD{0}{p}{s}\setminus\maD{0}{p}{s}$, since for this case $\gamma=h$, if we prove that $\frac{F}{h-w_0}\in \MaD{0}{p}{s}$, then by \eqref{eq:Blasco} we will obtain a contradiction. For every $z\in\D$
	$$
	\abs{\left (\frac{F}{h-w_0}\right )'(z)}^p\lesssim \frac{\abs{F'(z)}^p}{\abs{h(z)-w_0}^p} + \frac{\abs{F(z)h'(z)}^p}{\abs{h(z)-w_0}^{2p}} \leq \nm{(h-w_0)^{-1}}_{H^\infty}^p (\abs{F'(z)}^p+\abs{(T_{H}F)'(z)}^p),
	$$
	then if $I\subset \T$
	$$
	\frac{1}{\abs{I}^{s-(p-2)}}\int_{S(I)}\abs{\left (\frac{F}{H}\right )'(z)}^p(1-\abs{z}^2)^sdA(z) \lesssim  \nm{(h-w_0)^{-1}}_{H^\infty}^p \left (\nm{F}_{\MaD{0}{p}{s}}^p+\nm{T_{H}F}_{\MaD{0}{p}{s}}^p \right ),
	$$
	so $\frac{F}{h-w_0}\in \MaD{0}{p}{s}$ and $T_h(\frac{F}{H})\in [\phi_t,\MaD{0}{p}{s}]=\maD{0}{p}{s}$, leading to a contradiction. 
	
	On the other hand, if $H\in F_{\log,0}(p,p-2,s-(p-2))$, then by Theorem~\ref{thlet: tg log caract} we know that the operator $T_H$ is compact on $\MaD{0}{p}{s}$. Then arguing as in the proof of \cite[Theorem 1.1]{ChalDask}, $T_{H}$ has trivial spectrum. In particular, there exists $f\in \MaD{0}{p}{s}$ such that
	$$
	f(z)-T_{H}f(z)\equiv 1.
	$$
	
	Solving this first order ODE we find that $f=h-w_0$, in particular $h\in\MaD{0}{p}{s}$. In fact, as $h'(z)G(z)\equiv i\in\MaD{0}{p}{s}$ by \eqref{eq: maximal subs 1} it happens that $h\in[\phi_t,\MaD{0}{p}{s}]=\maD{0}{p}{s}$, so by Lemma~\ref{lemma: univalent bloch equiv maD} we get that $h\in \B_0$, but the range of the Koening's map always contains an infinite half-strip, implying that $h$ cannot belong to $\B_0$, leading to a contradiction and concluding the proof.
\end{Prf}

\begin{Prf}{\em{  Theorem~\ref{thm:1.4}}.}
	
	In this case, we start by proving the equivalence under the assumption of $(\phi_t)$ being an elliptic semigroup. Without loss of generality we can assume that the Denjoy-Wolff's point is $\tau=0$, then $\gamma$ is univalent and belongs to $\B_0$, in fact by Lemma~\ref{lemma: univalent bloch equiv maD} $\gamma\in\MaD{0}{p}{s}$, so $T_\gamma$ is always bounded on $\MaD{\a}{p}{s}$ by Proposition~\ref{propo: Tg bounded Mad univalent} (ii), then we can simplify \eqref{eq:Blasco} obtaining
	$$
	[\phi_t,\MaD{\a}{p}{s}]=\overline{T_\gamma(M_\alpha(D^p_s)) \oplus \mathbb{C}},
	$$
	so (i) is equivalent to the embedding $T_\gamma(M_\alpha(D^p_s))\subset m_\a(D^p_s)$, and by Proposition~\ref{propo: Tg compact mad} and Lemma~\ref{lemma: univalent bloch equiv maD}, this embedding is equivalent to $\gamma\in \B_0$, i.e, $G$ satisfying the vanishing Bloch condition. 
	
	It remains to prove that that non-elliptic semigroups cannot satisfy $[\phi_t,\MaD{\a}{p}{s}]=\maD{\a}{p}{s}$.\\
	Assume that $(\phi_t)$ is non-elliptic and $[\phi_t,\MaD{0}{p}{s}]=\maD{0}{p}{s}$, without loss of generality assume that the Denjoy-Wolff's point is $\tau=1$,  Let $h$ be the Koening function and $H(z)=\log (h(z)-w_0)$ for $w_0\in \C\setminus \overline{h(\D)}$, then $H$ is an univalent function that belongs to $\MaD{0}{p}{s}$, we distinguish two cases. 
	
	If $H\in\MaD{0}{p}{s}\setminus\maD{0}{p}{s}$, by Corollary~\ref{coro: construction mad} there exists a function $F\in\MaD{\a}{p}{s}$ such that $T_{H} F\in \MaD{\a}{p}{s}\setminus \maD{\a}{p}{s}$, this is equivalent to $T_h(\frac{F}{h-w_0})\in \MaD{\a}{p}{s}\setminus \maD{\a}{p}{s}$, now arguing as in the proof of Theorem~\ref{thm:1.3} we can check that $\frac{F}{h-w_0}\in \MaD{\a}{p}{s}$ and obtain a contradiction. 
	
	If $H\in\maD{0}{p}{s}$ by Proposition~\ref{propo: Tg compact mad} $T_{\log H}$ is compact on $\MaD{\beta}{p}{s}$ for every $0<\beta<\frac{s-(p-2)}{p}$, so again arguing as in the proof of Theorem~\ref{thm:1.3} for each $\beta$ there exists $f_\beta\in\MaD{\beta}{p}{s}$ such that
	$$
	f_{\beta}(z)-T_{\log H}f_\beta(z)\equiv 1,
	$$
	and solving the first order ODE, we find that $f_\beta=h-w_0$, so in particular $h\in\MaD{\beta}{p}{s}$ for every  $0<\beta<\frac{s-(p-2)}{p}$, it means 
	$$
	h\in \bigcap_{0<\beta<\frac{s-(p-2)}{p}} \MaD{\beta}{p}{s}\setminus \maD{0}{p}{s},
	$$
	Notice that $h\notin \maD{0}{p}{s}$ follows from Lemma~\ref{lemma: univalent bloch equiv maD} and the fact that $h\notin \B_0$. Lastly, by Corollary~\ref{coro: construction mad} there exists a function $F\in\MaD{\a}{p}{s}$, such that $T_hF\in \MaD{\a}{p}{s}\setminus \maD{\a}{p}{s}$ obtaining a contradiction and concluding to proof.
\end{Prf}

\bibliographystyle{plain}
\bibliography{biblio}

@article{Arevalo2019,
 author = {Ar{\'e}valo, Irina and Contreras, Manuel D. and Rodr{\'{\i}}guez-Piazza, Luis},
 title = {Semigroups of composition operators and integral operators on mixed norm spaces},
 fjournal = {Revista Matem{\'a}tica Complutense},
 journal = {Rev. Mat. Complut.},
 issn = {1139-1138},
 volume = {32},
 number = {3},
 pages = {767--798},
 year = {2019},
 language = {English},
 doi = {10.1007/s13163-019-00300-7},
 zbMATH = {7123184},
 Zbl = {1435.30150}
}

@book{Garnett2006,
 author = {Garnett, John B.},
 title = {Bounded analytic functions},
 edition = {Revised 1st ed.},
 fseries = {Graduate Texts in Mathematics},
 series = {Grad. Texts Math.},
 issn = {0072-5285},
 volume = {236},
 isbn = {0-387-33621-4},
 year = {2006},
 publisher = {New York, NY: Springer},
 language = {English},
 doi = {10.1007/0-387-49763-3},
 zbMATH = {5062981},
 Zbl = {1106.30001}
}

@book{PositiveOperators,
 author = {Aliprantis, Charalambos D. and Burkinshaw, Owen},
 title = {Positive operators},
 fseries = {Pure and Applied Mathematics (Academic Press)},
 series = {Pure Appl. Math., Academic Press},
 issn = {0079-8169},
 volume = {119},
 year = {1985},
 publisher = {Academic Press, New York, NY},
 language = {English},
 zbMATH = {3983937},
 Zbl = {0608.47039}
}

@article{AndJovSmithHinf,
 author = {Anderson, Austin and Jovovic, Mirjana and Smith, Wayne},
 title = {Composition semigroups on {BMOA} and {{\(H^\infty\)}}},
 fjournal = {Journal of Mathematical Analysis and Applications},
 journal = {J. Math. Anal. Appl.},
 issn = {0022-247X},
 volume = {449},
 number = {1},
 pages = {843--852},
 year = {2017},
 language = {English},
 doi = {10.1016/j.jmaa.2016.12.032},
 zbMATH = {6675622},
 Zbl = {1369.47025}
}

@article{AndJovSmithBp,
 author = {Anderson, Austin and Jovovic, Mirjana and Smith, Wayne},
 title = {Composition semigroups on the {Besov} spaces},
 fjournal = {Complex Analysis and Operator Theory},
 journal = {Complex Anal. Oper. Theory},
 issn = {1661-8254},
 volume = {19},
 number = {3},
 pages = {17},
 note = {Id/No 64},
 year = {2025},
 language = {English},
 doi = {10.1007/s11785-025-01686-7},
 zbMATH = {8039905}
}

@article{BlascoEtal,
 author = {Blasco, Oscar and Contreras, Manuel D. and D{\'{\i}}az-Madrigal, Santiago and Mart{\'{\i}}nez, Josep and Papadimitrakis, Michael and Siskakis, Aristomenis G.},
 title = {Semigroups of composition operators and integral operators in spaces of analytic functions},
 fjournal = {Annales Academiae Scientiarum Fennicae. Mathematica},
 journal = {Ann. Acad. Sci. Fenn., Math.},
 issn = {1239-629X},
 volume = {38},
 number = {1},
 pages = {67--89},
 year = {2013},
 language = {English},
 doi = {10.5186/aasfm.2013.3806},
 zbMATH = {6195908},
 Zbl = {1273.30046}
}

@article{BlasiPau,
 author = {Blasi, Daniel and Pau, Jordi},
 title = {A characterization of {Besov}-type spaces and applications to {Hankel}-type operators},
 fjournal = {Michigan Mathematical Journal},
 journal = {Mich. Math. J.},
 issn = {0026-2285},
 volume = {56},
 number = {2},
 pages = {401--417},
 year = {2008},
 language = {English},
 doi = {10.1307/mmj/1224783520},
 zbMATH = {5493336},
 Zbl = {1182.46015}
}

@book{BracContDiazBook,
 author = {Bracci, Filippo and Contreras, Manuel D. and D{\'{\i}}az-Madrigal, Santiago},
 title = {Continuous semigroups of holomorphic self-maps of the unit disc},
 fseries = {Springer Monographs in Mathematics},
 series = {Springer Monogr. Math.},
 issn = {1439-7382},
 isbn = {978-3-030-36781-7; 978-3-030-36784-8; 978-3-030-36782-4},
 year = {2020},
 publisher = {Cham: Springer},
 language = {English},
 doi = {10.1007/978-3-030-36782-4},
 zbMATH = {7170296},
 Zbl = {1441.30001}
}

@article{ChalDask,
  title = {Holomorphic semigroups and Sarason’s characterization of vanishing mean oscillation},
  volume = {39},
  ISSN = {2235-0616},
  url = {http://dx.doi.org/10.4171/RMI/1346},
  DOI = {10.4171/rmi/1346},
  number = {1},
  journal = {Revista Matemática Iberoamericana},
  publisher = {European Mathematical Society - EMS - Publishing House GmbH},
  author = {Chalmoukis,  Nikolaos and Daskalogiannis,  Vassilis},
  year = {2022},
  month = apr,
  pages = {321–340}
}

@article{Chen,
 author = {Chen, Jiale},
 title = {Some properties of the integration operators on the spaces {{\({F}(p, q, s)\)}}},
 fjournal = {Acta Mathematica Scientia. Series B. (English Edition)},
 journal = {Acta Math. Sci., Ser. B, Engl. Ed.},
 issn = {0252-9602},
 volume = {44},
 number = {1},
 pages = {173--188},
 year = {2024},
 language = {English},
 doi = {10.1007/s10473-024-0109-z},
 zbMATH = {7784078},
 Zbl = {1538.47075}
}

@article{DaskGal,
 author = {Daskalogiannis, Vassilis and Galanopoulos, Petros},
 title = {Semigroups of composition operators and integral operators in {BMOA}-type spaces},
 fjournal = {Complex Analysis and Operator Theory},
 journal = {Complex Anal. Oper. Theory},
 issn = {1661-8254},
 volume = {15},
 number = {8},
 pages = {30},
 note = {Id/No 117},
 year = {2021},
 language = {English},
 doi = {10.1007/s11785-021-01168-6},
 zbMATH = {7438143},
 Zbl = {1491.47030}
}

@article{GalMerchanSiskakis,
 author = {Galanopoulos, Petros and Merch{\'a}n, Noel and Siskakis, Aristomenis G.},
 title = {Semigroups of composition operators in analytic {Morrey} spaces},
 fjournal = {Integral Equations and Operator Theory},
 journal = {Integral Equations Oper. Theory},
 issn = {0378-620X},
 volume = {92},
 number = {2},
 pages = {15},
 note = {Id/No 12},
 year = {2020},
 language = {English},
 doi = {10.1007/s00020-020-2568-5},
 zbMATH = {7181798},
 Zbl = {1525.30037}
}

@article{Gumenyuk,
 author = {Gumenyuk, Pavel},
 title = {Angular and unrestricted limits of one-parameter semigroups in the unit disk},
 fjournal = {Journal of Mathematical Analysis and Applications},
 journal = {J. Math. Anal. Appl.},
 issn = {0022-247X},
 volume = {417},
 number = {1},
 pages = {200--224},
 year = {2014},
 language = {English},
 doi = {10.1016/j.jmaa.2014.02.057},
 zbMATH = {6391265},
 Zbl = {1305.30015}
}

@article{OrtegaFabrega,
 author = {Ortega, Joaqu{\'{\i}}n M. and F{\`a}brega, Joan},
 title = {Pointwise multipliers and corona type decomposition in {{\(BMOA\)}}},
 fjournal = {Annales de l'Institut Fourier},
 journal = {Ann. Inst. Fourier},
 issn = {0373-0956},
 volume = {46},
 number = {1},
 pages = {111--137},
 year = {1996},
 language = {English},
 doi = {10.5802/aif.1509},
 url = {https://eudml.org/doc/75168},
 zbMATH = {853997},
 Zbl = {0840.32001}
}

@article{PauZhao,
 author = {Pau, Jordi and Zhao, Ruhan},
 title = {Carleson measures, {Riemann}-{Stieltjes} and multiplication operators on a general family of function spaces},
 fjournal = {Integral Equations and Operator Theory},
 journal = {Integral Equations Oper. Theory},
 issn = {0378-620X},
 volume = {78},
 number = {4},
 pages = {483--514},
 year = {2014},
 language = {English},
 doi = {10.1007/s00020-014-2124-2},
 url = {hdl.handle.net/2445/96750},
 zbMATH = {6351692},
 Zbl = {1325.47075}
}

@article{PerfektDual,
 author = {Perfekt, Karl-Mikael},
 title = {Duality and distance formulas in spaces defined by means of oscillation},
 fjournal = {Arkiv f{\"o}r Matematik},
 journal = {Ark. Mat.},
 issn = {0004-2080},
 volume = {51},
 number = {2},
 pages = {345--361},
 year = {2013},
 language = {English},
 doi = {10.1007/s11512-012-0175-7},
 zbMATH = {6203781},
 Zbl = {1283.46011}
}

@article{PerfektCompact,
 author = {Perfekt, Karl-Mikael},
 title = {Weak compactness of operators acting on o-{O} type spaces},
 fjournal = {Bulletin of the London Mathematical Society},
 journal = {Bull. Lond. Math. Soc.},
 issn = {0024-6093},
 volume = {47},
 number = {4},
 pages = {677--685},
 year = {2015},
 language = {English},
 doi = {10.1112/blms/bdv031},
 zbMATH = {6472446},
 Zbl = {1333.46018}
}

@article{SiskakisDirichlet,
 author = {Siskakis, Aristomenis G.},
 title = {Semigroups of composition operators on the {Dirichlet} space},
 fjournal = {Results in Mathematics},
 journal = {Result. Math.},
 issn = {1422-6383},
 volume = {30},
 number = {1-2},
 pages = {165--173},
 year = {1996},
 language = {English},
 doi = {10.1007/BF03322189},
 zbMATH = {933265},
 Zbl = {0865.47030}
}

@incollection{SiskakisZhao,
 author = {Siskakis, Aristomenis G. and Zhao, Ruhan},
 title = {A {Volterra} type operator on spaces of analytic functions},
 booktitle = {Function spaces. Proceedings of the 3rd conference, Edwardsville, IL, USA, May 19--23, 1998},
 isbn = {0-8218-0939-3},
 pages = {299--311},
 year = {1999},
 publisher = {Providence, RI: American Mathematical Society},
 language = {English},
 zbMATH = {1306959},
 Zbl = {0955.47029}
}

@book{zhao96,
 author = {Zhao, Ruhan},
 title = {On a general family of function spaces},
 fseries = {Annales Academi{{\ae}} Scientiarum Fennic{{\ae}}. Mathematica. Dissertationes},
 series = {Ann. Acad. Sci. Fenn., Math., Diss.},
 issn = {1239-6303},
 volume = {105},
 isbn = {951-41-0795-0},
 year = {1996},
 publisher = {Helsinki: Suomalainen Tiedeakatemia},
 language = {English},
 zbMATH = {896885},
 Zbl = {0851.30017}
}

@article{zhaosurvey,
 author = {Zhao, Ruhan},
 title = {On {{\({F}(p, q, s)\)}} spaces},
 fjournal = {Acta Mathematica Scientia. Series B. (English Edition)},
 journal = {Acta Math. Sci., Ser. B, Engl. Ed.},
 issn = {0252-9602},
 volume = {41},
 number = {6},
 pages = {1985--2020},
 year = {2021},
 language = {English},
 doi = {10.1007/s10473-021-0613-3},
 zbMATH = {7560117},
 Zbl = {1513.30206}
}

@book{zhu,
 author = {Zhu, Kehe},
 title = {Operator theory in function spaces},
 fseries = {Pure and Applied Mathematics, Marcel Dekker},
 series = {Pure Appl. Math., Marcel Dekker},
 volume = {139},
 isbn = {0-8247-8411-1},
 year = {1990},
 publisher = {New York etc.: Marcel Dekker, Inc.},
 language = {English},
 zbMATH = {47946},
 Zbl = {0706.47019}
}

@article{Berkson1978,
 author = {Berkson, Earl and Porta, Horacio},
 title = {Semigroups of analytic functions and composition operators},
 fjournal = {Michigan Mathematical Journal},
 journal = {Mich. Math. J.},
 issn = {0026-2285},
 volume = {25},
 pages = {101--115},
 year = {1978},
 language = {English},
 doi = {10.1307/mmj/1029002009},
 zbMATH = {3594120},
 Zbl = {0382.47017}
}

@incollection{Siskakis1998,
 author = {Siskakis, Aristomenis G.},
 title = {Semigroups of composition operators on spaces of analytic functions, a review},
 booktitle = {Studies on composition operators. Proceedings of the Rocky Mountain Mathematics Consortium, Laramie, WY, USA, July 8--19, 1996},
 isbn = {0-8218-0768-4},
 pages = {229--252},
 year = {1998},
 publisher = {Providence, RI: American Mathematical Society},
 language = {English},
 zbMATH = {1107617},
 Zbl = {0904.47030}
}

@article{Blasco2008,
 author = {Blasco, Oscar and Contreras, Manuel D. and D{\'{\i}}az-Madrigal, Santiago and Mart{\'{\i}}nez, Josep and Siskakis, Aristomenis G.},
 title = {Semigroups of composition operators in {BMOA} and the extension of a theorem of {Sarason}},
 fjournal = {Integral Equations and Operator Theory},
 journal = {Integral Equations Oper. Theory},
 issn = {0378-620X},
 volume = {61},
 number = {1},
 pages = {45--62},
 year = {2008},
 language = {English},
 doi = {10.1007/s00020-008-1568-7},
 zbMATH = {5294664},
 Zbl = {1171.47034}
}

@article{Wu2021,
 author = {Wu, Fanglei and Wulan, Hasi},
 title = {Semigroups of composition operators on {{\(\mathcal{Q}_p\)}} spaces},
 fjournal = {Journal of Mathematical Analysis and Applications},
 journal = {J. Math. Anal. Appl.},
 issn = {0022-247X},
 volume = {496},
 number = {2},
 pages = {15},
 note = {Id/No 124845},
 year = {2021},
 language = {English},
 doi = {10.1016/j.jmaa.2020.124845},
 zbMATH = {7316452},
 Zbl = {1480.47064}
}

@article{Sarason1975,
 author = {Sarason, Donald},
 title = {Functions of vanishing mean oscillation},
 fjournal = {Transactions of the American Mathematical Society},
 journal = {Trans. Am. Math. Soc.},
 issn = {0002-9947},
 volume = {207},
 pages = {391--405},
 year = {1975},
 language = {English},
 doi = {10.2307/1997184},
 zbMATH = {3500403},
 Zbl = {0319.42006}
}

@article{Aluaskari1997,
 author = {Aulaskari, Rauno and Lappan, Peter and Xiao, Jie and Zhao, Ruhan},
 title = {On {{\(\alpha\)}}-{Bloch} spaces and multipliers of {Dirichlet} spaces},
 fjournal = {Journal of Mathematical Analysis and Applications},
 journal = {J. Math. Anal. Appl.},
 issn = {0022-247X},
 volume = {209},
 number = {1},
 pages = {103--121},
 year = {1997},
 language = {English},
 doi = {10.1006/jmaa.1997.5345},
 zbMATH = {1031556},
 Zbl = {0892.30030}
}

@article {Pommerenke1978,
AUTHOR = {Pommerenke, Christian},
TITLE = {On univalent functions, {B}loch functions and {VMOA}},
JOURNAL = {Math. Ann.},
FJOURNAL = {Mathematische Annalen},
VOLUME = {236},
YEAR = {1978},
NUMBER = {3},
PAGES = {199--208},
ISSN = {0025-5831,1432-1807},
MRCLASS = {30A32 (30A36)},
MRNUMBER = {492206},
MRREVIEWER = {L.\ A.\ Aksent\cprime ev},
DOI = {10.1007/BF01351365},
URL = {https://doi.org/10.1007/BF01351365},
}

@article {GirPel,
AUTHOR = {Girela, Daniel and Pel\'aez, Jos\'e{} \'Angel},
TITLE = {Growth properties and sequences of zeros of analytic functions in spaces of {D}irichlet type},
JOURNAL = {J. Aust. Math. Soc.},
FJOURNAL = {Journal of the Australian Mathematical Society},
VOLUME = {80},
YEAR = {2006},
NUMBER = {3},
PAGES = {397--418},
ISSN = {1446-7887,1446-8107},
MRCLASS = {30D35 (30D55 46E15)},
MRNUMBER = {2236047},
MRREVIEWER = {L.\ R.\ Sons},
DOI = {10.1017/S1446788700014105},
URL = {https://doi.org/10.1017/S1446788700014105},
}

\end{document}